\newtheorem{THM}{Theorem}[section]
\newtheorem{LEM}[THM]{Lemma}
\newtheorem{COR}[THM]{Corollary}
\newtheorem{PROP}[THM]{Proposition}
\theoremstyle{definition}
\newtheorem{EX}[THM]{Example}
\def\shift(#1)(#2){\!\!\downarrow\!{}^{#1}_{\raise .1ex\vbox to 0pt{\vss\hbox{$\scriptstyle #2$}}}\,}
\def\ucl(#1){\lfloor #1 \rfloor}
\def\dcl(#1){\lceil #1 \rceil}
\def\specrel#1#2{\mathrel{\mathop{\kern0pt #1}\limits_{#2}}}
\def\restricts{\!\restriction\!}
\newcommand\T{\mathcal T}
\def\F{\mathcal F}
\def\L{\mathcal L}
\def\S{\mathcal S}
\def\X{\mathcal X}
\def\lowfwd #1#2#3{{\mathop{\kern0pt #1}\limits^{\kern#2pt\raise.#3ex
\vbox to 0pt{\hbox{$\scriptscriptstyle\rightarrow$}\vss}}}}
\def\lowbkwd #1#2#3{{\mathop{\kern0pt #1}\limits^{\kern#2pt\raise.#3ex
\vbox to 0pt{\hbox{$\scriptscriptstyle\leftarrow$}\vss}}}}
\def\lowfbwd #1#2#3{{\mathop{\kern0pt #1}\limits^{\kern#2pt\raise.#3ex
\vbox to 0pt{\hbox{$\scriptscriptstyle\leftrightarrow$}\vss}}}}
\def\fwd #1#2{{\lowfwd{#1}{#2}{15}}}
\def\ve{\kern-1.5pt\lowfwd e{1.5}2\kern-1pt}
\def\vedash{{\mathop{\kern0pt e\lower.5pt\hbox{${}
     \scriptstyle'$}}\limits^{\kern0pt\raise.02ex
     \vbox to 0pt{\hbox{$\scriptscriptstyle\rightarrow$}\vss}}}}
\def\ev{\kern-1pt\lowbkwd e{0.5}2\kern-1pt}
\def\vf{\kern-2pt\lowfwd f{2.5}2\kern-1pt}
\def\vfdash{{\mathop{\kern0pt f\raise 1pt\hbox{${}
     \scriptstyle'$}}\limits^{\kern2pt\raise.02ex
     \vbox to 0pt{\hbox{$\scriptscriptstyle\rightarrow$}\vss}}}}
\def\vp{\lowfwd p{1.5}2}
\def\vr{\lowfwd r{1.5}2}
\def\rv{\lowbkwd r02}
\def\vrdash{{\mathop{\kern0pt r\lower.5pt\hbox{${}
     \scriptstyle'$}}\limits^{\kern0pt\raise.02ex
     \vbox to 0pt{\hbox{$\scriptscriptstyle\rightarrow$}\vss}}}}
\def\rvdash{{\mathop{\kern0pt r\lower.5pt\hbox{${}
     \scriptstyle'$}}\limits^{\kern0pt\raise.02ex
     \vbox to 0pt{\hbox{$\scriptscriptstyle\leftarrow$}\vss}}}}
\def\vrdashp{{\mathop{\kern0pt r_p\kern-4pt\lower.5pt\hbox{${}
     \scriptstyle'$}}\limits^{\kern0pt\raise.02ex
     \vbox to 0pt{\hbox{$\scriptscriptstyle\rightarrow$}\vss}}}\,}
\def\rvdashp{{\mathop{\kern0pt r_p\kern-4pt\lower.5pt\hbox{${}
     \scriptstyle'$}}\limits^{\kern0pt\raise.02ex
     \vbox to 0pt{\hbox{$\scriptscriptstyle\leftarrow$}\vss}}}\,}
\def\vrddash{{\mathop{\kern0pt r\lower.5pt\hbox{${}
     \scriptstyle''$}}\limits^{\kern0pt\raise.02ex
     \vbox to 0pt{\hbox{$\scriptscriptstyle\rightarrow$}\vss}}}}
\def\vrone{\lowfwd {r_1}12}
\def\vrtwo{\lowfwd {r_2}12}
\def\vrp{\lowfwd {r_p}12}
\def\rvp{\lowbkwd {r_p}02}
\def\vrq{\lowfwd {r_q}12}
\def\rvq{\lowbkwd {r_q}02}
\def\amgis{\lowbkwd \sigma02}
\def\vs{\lowfwd s{1.5}1}
\def\sv{{{\lowbkwd s{1.5}1}\hskip-1pt}}
\def\vsdash{{\mathop{\kern0pt s\lower.5pt\hbox{${}
     \scriptstyle'$}}\limits^{\kern0pt\raise.02ex
     \vbox to 0pt{\hbox{$\scriptscriptstyle\rightarrow$}\vss}}}}
\def\svdash{{\mathop{\kern0pt s\lower.5pt\hbox{${}
     \scriptstyle'$}}\limits^{\kern0pt\raise.02ex
     \vbox to 0pt{\hbox{$\scriptscriptstyle\leftarrow$}\vss}}}}
\def\vsddash{{\mathop{\kern0pt s\lower.5pt\hbox{${}
     \scriptstyle''$}}\limits^{\kern0pt\raise.02ex
     \vbox to 0pt{\hbox{$\scriptscriptstyle\rightarrow$}\vss}}}}
\def\svddash{{\mathop{\kern0pt s\lower.5pt\hbox{${}
     \scriptstyle''$}}\limits^{\kern0pt\raise.02ex
     \vbox to 0pt{\hbox{$\scriptscriptstyle\leftarrow$}\vss}}}}
\def\vsdashp{{\mathop{\kern0pt s_p\kern-4pt\lower.5pt\hbox{${}
     \scriptstyle'$}}\limits^{\kern0pt\raise.02ex
     \vbox to 0pt{\hbox{$\scriptscriptstyle\rightarrow$}\vss}}}\,}
\def\svdashp{{\mathop{\kern0pt s_p\kern-4pt\lower.5pt\hbox{${}
     \scriptstyle'$}}\limits^{\kern0pt\raise.02ex
     \vbox to 0pt{\hbox{$\scriptscriptstyle\leftarrow$}\vss}}}\,}
\def\vso{\lowfwd {s_0}11}
\def\svo{\lowbkwd {s_0}02}
\def\vsone{\lowfwd {s_1}11}
\def\svone{\lowbkwd {s_1}11}
\def\vstwo{\lowfwd {s_2}11}
\def\vsi{\lowfwd {s_i}11}
\def\vsn{\lowfwd {s_n}11}
\def\vsidash{{\mathop{\kern0pt s_i\kern-3.5pt\lower.3pt\hbox{${}
     \scriptstyle'$}}\limits^{\kern0pt\raise.02ex
     \vbox to 0pt{\hbox{$\scriptscriptstyle\rightarrow$}\vss}}}}
\def\vsp{\lowfwd {s_p}11}
\def\vspdash{\lowfwd {s_{p'}}11}
\def\svp{\lowbkwd {s_p}{-1}2}
\def\vsq{\lowfwd {s_q}11}
\def\svq{\lowbkwd {s_q}{-1}2}
\def\vsqdash{{\mathop{\kern0pt s_q\kern-3.5pt\lower.3pt\hbox{${}
     \scriptstyle'$}}\limits^{\kern0pt\raise.02ex
     \vbox to 0pt{\hbox{$\scriptscriptstyle\rightarrow$}\vss}}}}
\def\vtp{\lowfwd {t_p}11}
\def\vtq{\lowfwd {t_q}11}
\def\tvp{\lowbkwd {t_p}01}
\def\vR{{\hskip-1pt{\fwd R3}\hskip-1pt}} 
\def\vS{{\hskip-1pt{\fwd S3}\hskip-1pt}} 
\def\vSp{\lowfwd {S_p}11}
\def\vSpnought{\lowfwd {S_{p_0}}11}
\def\vSq{\lowfwd {S_q}11}
\def\vSstar{{\mathop{\kern0pt S\lower-1pt\hbox{$^*$}}\limits^{\kern2pt
     \vbox to 0pt{\hbox{$\scriptscriptstyle\rightarrow$}\vss}}}}
\def\vSdash{{\mathop{\kern0pt S\lower-1pt\hbox{${}
     \scriptstyle'$}}\limits^{\kern2pt\raise.1ex
     \vbox to 0pt{\hbox{$\scriptscriptstyle\rightarrow$}\vss}}}}
\def\vSinf{\vS}
\def\vt{\lowfwd t{1.5}1}
\def\tv{\lowbkwd t{1.5}1}
\def\es{\emptyset}
\def\sub{\subseteq}
\def\supe{\supseteq}
\def\sm{\smallsetminus}
\newcommand\COMMENT[1]{}
\def\?#1{\vadjust{\vbox to 0pt{\vss\vskip-8pt\leftline{%
     \llap{\hbox{\vbox{\pretolerance=-1
     \doublehyphendemerits=0\finalhyphendemerits=0
     \hsize20truemm\tolerance=10000\small
     \lineskip=0pt\lineskiplimit=0pt
     \rightskip=0pt plus16truemm\baselineskip8pt\noindent
     \hskip0pt        
     #1\endgraf}\hskip2truemm}}}\vss}}}
\newenvironment{txteq*}
  {
    \begin{equation*}
    \begin{minipage}[c]{0.85\textwidth} 
    \em                                
  }
  {\end{minipage}\end{equation*}\ignorespacesafterend}
\title{Profinite separation systems\\ {\small Extended version}}
 \author{Reinhard Diestel\and Jakob Kneip} %
\renewcommand\S{\mathcal S}
\def\F{\mathcal F}
\newcommand{\braces}[1]{\left(#1\right)}
\newcommand{\abs}[1]{\left |#1\right |}
\newcommand{\tn}[1]{\textnormal{#1}}
\def\N{\mathbb{N}}
\def\proj{\restricts}
\def\lowfwd #1#2#3{{\mathop{\kern0pt #1}\limits^{\kern#2pt\raise.#3ex
\vbox to 0pt{\hbox{$\scriptscriptstyle\rightarrow$}\vss}}}}
\def\lowbkwd #1#2#3{{\mathop{\kern0pt #1}\limits^{\kern#2pt\raise.#3ex
\vbox to 0pt{\hbox{$\scriptscriptstyle\leftarrow$}\vss}}}}
\def\fwd #1#2{{\lowfwd{#1}{#2}{15}}}
\def\ve{\kern-1pt\lowfwd e{1.5}2\kern-1pt}
\def\ev{\kern-1pt\lowbkwd e{1.5}2\kern-1pt}
\def\vp{\lowfwd p{1.5}2}
\def\vr{\lowfwd r{1.5}2}
\def\rv{\lowbkwd r02}
\def\vrdash{{\mathop{\kern0pt r\lower.5pt\hbox{${}
     \scriptstyle'$}}\limits^{\kern0pt\raise.02ex
     \vbox to 0pt{\hbox{$\scriptscriptstyle\rightarrow$}\vss}}}}
\def\rvdash{{\mathop{\kern0pt r\lower.5pt\hbox{${}
     \scriptstyle'$}}\limits^{\kern0pt\raise.02ex
     \vbox to 0pt{\hbox{$\scriptscriptstyle\leftarrow$}\vss}}}}
\def\vrone{\lowfwd {r_1}12}
\def\vrtwo{\lowfwd {r_2}12}
\def\vs{\lowfwd s{1.5}1}
\def\sv{\lowbkwd s{1.5}1}
\def\vso{\lowfwd {s_0}11}
\def\svo{\lowbkwd {s_0}02}
\def\vsone{\lowfwd {s_1}11}
\def\vstwo{\lowfwd {s_2}11}
\def\vsi{\lowfwd {s_i}11}
\def\vsidash{{\mathop{\kern0pt s_i\kern-3.5pt\lower.3pt\hbox{${}
     \scriptstyle'$}}\limits^{\kern0pt\raise.02ex
     \vbox to 0pt{\hbox{$\scriptscriptstyle\rightarrow$}\vss}}}}
\def\vS{{\hskip-1pt{\fwd S3}\hskip-1pt}} 
\def\vSr{{\vec S}_{\raise.1ex\vbox to 0pt{\vss\hbox{$\scriptstyle\ge\vr$}}}}
\def\vSdash{{\mathop{\kern0pt S\lower-1pt\hbox{${}
     \scriptstyle'$}}\limits^{\kern2pt\raise.1ex
     \vbox to 0pt{\hbox{$\scriptscriptstyle\rightarrow$}\vss}}}}
\def\vsdash{{\mathop{\kern0pt s\lower.5pt\hbox{${}
     \scriptstyle'$}}\limits^{\kern0pt\raise.02ex
     \vbox to 0pt{\hbox{$\scriptscriptstyle\rightarrow$}\vss}}}}
\def\svdash{{\mathop{\kern0pt s\lower.5pt\hbox{${}
     \scriptstyle'$}}\limits^{\kern0pt\raise.02ex
     \vbox to 0pt{\hbox{$\scriptscriptstyle\leftarrow$}\vss}}}}
\def\vtdash{{\mathop{\kern0pt t\lower0pt\hbox{${}
     \scriptstyle'$}}\limits^{\kern0pt\raise.1ex
     \vbox to 0pt{\hbox{$\scriptscriptstyle\rightarrow$}\vss}}}}
\def\tvdash{{\mathop{\kern0pt t\lower0pt\hbox{${}
     \scriptstyle'$}}\limits^{\kern0pt\raise.1ex
     \vbox to 0pt{\hbox{$\scriptscriptstyle\leftarrow$}\vss}}}}
\def\vddash{{\mathop{\kern0pt d\raise1pt\hbox{${}
     \scriptstyle'$}}\limits^{\kern0pt\raise.02ex
     \vbox to 0pt{\hbox{$\scriptscriptstyle\rightarrow$}\vss}}}}
\def\dvdash{{\mathop{\kern0pt d\raise1pt\hbox{${}
     \scriptstyle'$}}\limits^{\kern0pt\raise.02ex
     \vbox to 0pt{\hbox{$\scriptscriptstyle\leftarrow$}\vss}}}}
\def\vtstar{{\mathop{\kern0pt t\raise2.5pt\hbox{${}
     \scriptstyle*$}}\limits^{\kern0pt\raise.1ex
     \vbox to 0pt{\hbox{$\scriptscriptstyle\rightarrow$}\vss}}}}
\def\tvstar{{\mathop{\kern0pt t\raise2.5pt\hbox{${}
     \scriptstyle*$}}\limits^{\kern0pt\raise.1ex
     \vbox to 0pt{\hbox{$\scriptscriptstyle\leftarrow$}\vss}}}}
\def\vtstarD{{\mathop{\kern0pt t\kern.5pt\raise3pt\hbox{${}
     \scriptstyle*$}{\kern-5.5pt\lower3pt\hbox{$
     \scriptstyle D$}}}\limits^{\kern0pt\raise.1ex
     \vbox to 0pt{\hbox{$\scriptscriptstyle\rightarrow$}\vss}}}}
\def\tvstarD{{\mathop{\kern0pt t\kern.5pt\raise3pt\hbox{${}
     \scriptstyle*$}{\kern-5.5pt\lower3pt\hbox{$
     \scriptstyle D$}}}\limits^{\kern0pt\raise.1ex
     \vbox to 0pt{\hbox{$\scriptscriptstyle\leftarrow$}\vss}}}}
\def\vt{\lowfwd t{1.5}1}
\def\tv{\lowbkwd t{1.5}1}
\def\vm{\lowfwd m{1}2}
\def\mv{\lowbkwd m02}
\def\vn{\lowfwd n{1.5}2}
\def\nv{\lowbkwd n{.5}2}
\def\vt{\lowfwd t{1.5}1}
\def\tv{\lowbkwd t{1.5}1}
\def\vx{\lowfwd x{1.5}1}
\def\vE{\lowfwd E{1.5}1}
\def\invlim{\varprojlim\,\,}
 \def\vSdashp{{\mathop{\kern0pt S_p\kern-4pt\lower-1pt\hbox{$\scriptstyle'$}}\limits^{\kern1pt\raise.1ex
			\vbox to 0pt{\hbox{$\scriptscriptstyle\rightarrow$}\vss}}}}
\begin{document}
\abovedisplayshortskip=-3pt plus3pt
\belowdisplayshortskip=6pt

\maketitle

\begin{abstract}\noindent
Separation systems are posets with additional structure that form an abstract setting in which tangle-like clusters in graphs, matroids and other combinatorial structures can be expressed and studied.

This paper offers some basic theory about infinite separation systems and how they relate to the finite separation systems they induce. They can be used to prove tangle-type duality theorems for infinite graphs and matroids, which will be done in future work that will build on this paper.
\end{abstract}

\section{Introduction}\label{sec:intro}

This paper is a sequel to, and assumes familiarity with, an earlier paper~\cite{AbstractSepSys} in which finite abstract separation systems were introduced. The profinite separation systems introduced here, and the results proved, will form the basis for our proof of the tangle duality theorem for infinite graphs~\cite{duality1inf}, as well as for a more comprehensive study of profinite {\em tree sets\/}~\cite{KneipProfiniteTreeSets}.

Abstract separation systems were introduced in~\cite{AbstractSepSys} to lay the foundations for a comprehensive study of how tangles, originally introduced by Robertson and Seymour~\cite{GMX} in the course of their graph minors project, can be generalized to capture, and relate, various other types of highly cohesive regions in graphs, matroids and other combinatorial structures. The basic idea behind tangles is that they describe such a region indirectly: not by specifying which objects, such as vertices or edges, belong to it, but by setting up a system of pointers on the entire structure that point towards this region. The advantage of this indirect approach is that such pointers can locate such a highly cohesive region even when it is a little fuzzy~-- e.g., when for every low-order separation of a graph or matroid `most' of the region will lie on one side or the other, so that this separation can be oriented towards it and become a pointer, but each individual vertex or edge (say) can lie on the `wrong' side of {\em some\/} such separation. (The standard example is that of a large grid in a graph: for every low-oder separation of the graph, most of the grid will lie on one side, but each of its vertices lies on the `other' side of the small separator that consists of just its four neighbours.)

This indirect approach can be applied also to capture fuzzy clusters in settings very different from graphs; see \cite{TanglesEmpirical, TanglesSocial, ProfilesNew, TangleTreeAbstract, MonaLisa} for more discussion.

Robertson and Seymour~\cite{GMX} proved two major theorems about tangles: the {\em tangle-tree theorem\/}, which shows how the tangles in a graph can be pairwise separated by a small set of nested separations (which organize the graph into a tree-like shape), and the {\em tangle duality\/} theorem which tells us that if the graph has no tangle of a given `order' then the entire graph has corresponding tree-like structure. Both these theorems can be proved in the general settings of abstract separation systems~\cite{TangleTreeAbstract, ProfilesNew}, and then be applied in the above scenarios.

Unlike the tangle-tree theorem, the tangle duality theorem can be extended without unforeseen obstacles to infinite graphs and matroids. This too is most conveniently done in the setting of abstract separation systems. The infinite separation systems needed for this are {\em profinite\/}~-- which means, roughly, that they are completely determined by their finite subsystems. This helps greatly with the proof: it allows us to use the abstract tangle duality theorems from~\cite{TangleTreeAbstract} on the finite subsystems, and then extend this by compactness~\cite{duality1inf}.

Although compactness lies at the heart of these extensions, they are not entirely straightforward: one needs to know how exactly a profinite separation system is related to its finite subsystems~-- not just that these, in principle, determine it. This paper examines and solves these questions. It does so in the general setting of arbitrary profinite separation systems, of which the separations of infinite graphs and matroids (no matter how large) are examples.%
   \footnote{Note that whether or not two vertex sets $A,B$ covering a graph~$G$ form a separation is determined by its subgraphs of order~2: it is if and only if $G$ has no edge from $A\sm B$ to $B\sm A$.\looseness=-1}

Our paper is organized as follows. The basic definitions and facts about abstract separation systems are not repeated here; instead, the reader is referred to~\cite{AbstractSepSys}. In Section~\ref{sec:separations} we cite just a couple of lemmas that will be used throughout, and define homomorphisms between separation systems; these will be needed to set up the inverse systems on which our profinite separation systems will be based. A~brief reminder of inverse limits of finite sets is offered in Section~\ref{sec:limits}. Profinite separation systems, including a natural topology on them, are then introduced rigorously in Section~\ref{sec:limsep}.\ Section~\ref{sec:limtreesets} contains most of our groundwork: it gives a detailed analysis of how, in `nested' profinite separation systems, the properties of the separation system itself are related to the properties of the finite separation systems of which it is an inverse limit. In Section~\ref{sec:compactnessthm} we apply this to obtain a compactness theorem for subsystems that occur as witnesses to the nonexistence of abstract tangles~\cite{duality1inf}. The theorem says that a profinite separation system has a nested subset of a certain type~-- one that avoids some given forbidden sets of separations~-- as soon as all its finite projections have nested subsets of that type. In Section~\ref{sec:orientations}, finally, we compare the combinatorial properties of nested separation systems with their topological properties; this has implications on the tools, topological or combinatorial, that can be used to handle profinite nested separation systems when these are applied.

\section{Separation Systems}\label{sec:separations}

For definitions and basic properties of abstract separation systems we refer to \cite{AbstractSepSys} and \cite{TreeSets}. In addition, we shall use the following terms.

We call a separation {\em co-small\/} if its inverse is small. If an oriented separation $\vr$ is trivial and this is witnessed by some separation~$s$, we also call the orientations $\vs,\sv$ of~$s$ {\em witnesses\/} of the triviality of~$\vr$. And finally, if $\vr\in\sigma\sub\vS$ for some separation system~$\vS$, we may informally say that $\vr$ is {\em trivial in~$\sigma$} to express that $\vr$ is trivial in~$\vS$ with a witness in~$\sigma$. Note that this does not depend on anything in~$\vS$ outside~$\sigma$: it is equivalent to saying that $\vr$ is trivial in the separation system that $\vS$ induces on the set of separations in $\sigma$ and their inverses.

We shall be using the following two lemmas from~\cite{AbstractSepSys}:

\begin{LEM}[Extension Lemma]\tn{\cite[Lemma 4.1]{AbstractSepSys}}\label{lem:extension}
	Let $S$ be a set of unoriented separations, and let $P$ be a consistent partial orientation of $S$.
	\begin{enumerate}\itemsep=0pt
		\item[\tn{(i)}]$P$ extends to a consistent orientation~$ O $ of $S$ if and only if no element of $P$ is co-trivial in $S$.
		\item[\tn{(ii)}]If $\vp$ is maximal in $P$, then $O$ in \tn{(i)} can be chosen with $\vp$ maximal in $O$ if and only if $\vp$ is nontrivial in $\vS$.
		\item[\tn{(iii)}]If $S$ is nested, then the orientation~$ O $ in \tn{(ii)} is unique.
	\end{enumerate}
\end{LEM}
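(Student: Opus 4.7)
The plan is to prove the three parts in order, with (i) handled by a Zorn's Lemma / greedy extension argument, (ii) as a refinement of (i) that tracks the maximality of $\vp$, and (iii) as a short uniqueness corollary of nestedness.

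For (i), the forward (``only if'') direction is a direct deduction from the definitions. If $\vp\in P$ is co-trivial, one unpacks the definition to obtain some $s\in S$ and inequalities placing $\vp$ and both orientations of $s$ in a configuration where any orientation of $s$ taken into $O$ immediately produces a pair violating consistency with $\vp$. For the reverse direction I would apply Zorn's Lemma to the poset of consistent partial orientations of $S$ containing $P$, ordered by inclusion, and take a maximal element $O$. Suppose for contradiction that $O$ fails to orient some $s\in S$: maximality of $O$ then forces both $O\cup\{\vs\}$ and $O\cup\{\sv\}$ to be inconsistent, so each of $\vs$ and $\sv$ is blocked by an element of $O$. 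If the two blockers were distinct, combining the blocking inequalities via the involution would exhibit a direct inconsistency between those two elements of $O$, contradicting its consistency; hence both are the same element $\vr\in O$, and the blocking relations force $\vr$ into the configuration that makes $\vr$ (equivalently $\rv$) co-trivial in $S$. The final task is to argue that this $\vr$ must lie in $P$ itself, which I expect to handle by running the extension as a controlled transfinite induction so that each added element preserves the no-co-trivial property relative to the current stage.

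Part (ii) refines (i) with the further constraint that a prescribed maximal element $\vp\in P$ remain maximal in $O$. The forward direction is immediate: triviality of $\vp$ in $\vS$ with witness $s$ gives inequalities $\vp<\vs$ and $\vp<\sv$, so whichever orientation of $s$ is included in $O$ strictly exceeds $\vp$ and $\vp$ cannot be maximal. For the converse I would repeat the Zorn's Lemma construction from (i) restricted to partial orientations in which no element strictly exceeds $\vp$; nontriviality of $\vp$ guarantees that for each $s\ne p$ at least one of $\vs,\sv$ is not strictly above $\vp$, so the restricted poset remains rich enough to host full orientations, and the consistency analysis from (i) goes through unchanged. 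Part (iii) is then immediate: when $S$ is nested, any two separations are comparable in one of the four standard ways, so for each $s\in S\sm\{p\}$ the two constraints ``consistent with $O$'' and ``$\vp$ still maximal'' together single out exactly one of $\vs,\sv$, forcing $O$ uniquely.

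The main obstacle I expect is the back-tracing step in the reverse direction of (i): the Zorn's Lemma argument naturally yields an obstruction sitting in $O$, whereas the hypothesis speaks only of elements of $P$. In the harder case $\vr\in O\sm P$ one needs to show that the same inconsistency was already forced by $P$ alone. I plan to address this either by restricting the Zorn's Lemma argument to a well-ordered transfinite construction (so that the blocking pattern at each stage refers back to the previous stage, and hence inductively to $P$), or by a direct compactness/finite-character argument noting that the obstruction is witnessed by finitely many elements and can therefore be reduced to a configuration inside $P$. The other steps---the forward directions, the Zorn construction itself, and the uniqueness in (iii)---should be routine once the definitions are in hand.
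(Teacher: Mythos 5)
This lemma is quoted from \cite[Lemma 4.1]{AbstractSepSys} and is not proved in the present paper, so there is no in-paper proof to compare your attempt against; I can only assess the proposal on its own merits.

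Your Zorn-based plan for (i) is viable, and you have correctly isolated the crux: a maximal consistent extension $O$ of $P$ that fails to orient some $s$ yields, by the case analysis you sketch, a single blocker $\vr\in O$ that is co-trivial in $\vS$ but need not lie in $P$. Of your two proposed repairs, the controlled transfinite construction is the right one; equivalently, run Zorn over the poset of consistent partial orientations extending $P$ that contain \emph{no} co-trivial element. At each step one checks that one orientation of the next $s$ can be added while preserving both invariants: if not, either two distinct blockers in the current stage would already be inconsistent with one another, or a single blocker would itself be co-trivial, violating the invariant, and if the added orientation were co-trivial one shows a blocker of the other orientation would again be co-trivial. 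Your alternative ``finite character'' repair does not address the real difficulty, which is not the size of the obstruction but that a co-trivial element may simply have been admitted gratuitously in $O\sm P$; there is nothing to reduce back to $P$. In (ii) the restricted Zorn argument needs one case beyond ``the consistency analysis from (i) goes through unchanged'': when $\sv>\vp$ is the reason $\sv$ is barred, you must argue separately that $\vs$ can still be added, using both the ``nothing strictly above $\vp$'' invariant and the consistency of the current stage to rule out a blocker for $\vs$. Your outline of (iii) is sound: by nestedness each $s\ne p$ has orientations comparable with those of $p$, and requiring consistency plus the maximality of $\vp$ pins down which orientation must lie in $O$; it deserves a line, using the nontriviality of $\vp$ and antisymmetry, to confirm that the several possible comparabilities cannot force conflicting choices.
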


\goodbreak

Nested separation systems without degenerate or trivial elements are known as {\em tree sets\/}. A subset $\sigma\sub\vS$ {\em splits\/} a nested separation system~$\vS$ if $ \vS $ has a consistent orientation $ O $ such that $ O\subseteq\dcl(\sigma) $ and $ \sigma $ is precisely the set of maximal elements of~$ O $. Conversely, a consistent orientation $O$ of~$\vS$ {\em splits} (at~$\sigma$) if it is contained in the down-closure of the set~$\sigma$ of its maximal elements.

The consistent orientations of a finite nested separation system can be recovered from its splitting subsets by taking their down-closures, but infinite separation systems can have consistent orientations without any maximal elements.%
   \COMMENT{}

\begin{LEM}\tn{\cite[Lemmas 4.4, 4.5]{AbstractSepSys}}\label{Remark8}
	The splitting subsets of a nested separation system $\vS$ without degenerate elements are proper stars. Their elements are neither trivial nor co-trivial in~$\vS$. If $S$ has a degenerate element~$s$, then $\{\vs\}$ is the unique splitting subset of~$\vS$.
\end{LEM}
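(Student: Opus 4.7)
Fix a splitting subset $\sigma\sub\vS$ with witnessing consistent orientation $O$, so that $\sigma$ is exactly the set of maximal elements of $O$ and $O\sub\dcl(\sigma)$. The plan is to dispatch the three claims in turn, each by a short case analysis built on nestedness, maximality in~$O$, and consistency of~$O$.

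For the first claim, assume $\vS$ has no degenerate element and pick distinct $\vr,\vs\in\sigma$. Since $r$ and $s$ are nested, one of the four relations $\vr\le\vs$, $\vs\le\vr$, $\sv\le\vr$, $\vr\le\sv$ must hold. The first two each strictly contradict the maximality of $\vr$ or $\vs$ in $O$, while $\sv\le\vr$ with $\vr,\vs\in O$ and $r\ne s$ directly violates consistency. Only $\vr\le\sv$ survives, so $\sigma$ is a star. Properness is automatic: $\sigma\sub O$ sits inside an orientation and thus cannot contain a pair $\vr,\rv$, and no element of $\sigma$ is degenerate by hypothesis.

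For the triviality/co-triviality claim, if some $\vr\in\sigma$ were trivial in $\vS$ with witness $s\ne r$, then $\vr<\vs$ and $\vr<\sv$ would place whichever of $\vs,\sv$ is oriented into $O$ strictly above $\vr$, contradicting its maximality in~$O$. Co-triviality of any $\vr\in\sigma$ is ruled out by Extension Lemma~\ref{lem:extension}(i) applied to the consistent partial orientation $\sigma$, which extends to the consistent orientation~$O$.

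For the last claim, suppose $\vs=\sv$ is degenerate. To exhibit $\{\vs\}$ as a splitting subset I would take $O:=\dcl(\{\vs\})$: the identification $\vs=\sv$ reduces the four nested relations between any $r$ and $s$ to ``$\vr\le\vs$ or $\rv\le\vs$'', so $\dcl(\{\vs\})$ is indeed an orientation of $\vS$; and a would-be consistency witness $\pv\le\vq$ with $\vp,\vq\in\dcl(\{\vs\})$ combines with $\vp,\vq\le\vs$ to collapse $\vp=\vq=\vs$. Uniqueness goes by the same ingredients: for any splitting subset $\sigma'$ with witness $O'$, the degeneracy forces $\vs\in O'$; consistency forbids any element of $O'$ from strictly dominating $\vs$; and the nestedness-collapse places every other element of $O'$ strictly below $\vs$, so $\sigma'=\{\vs\}$. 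The only place to be careful is keeping the involution bookkeeping straight once $\vs=\sv$ identifies the two orientations of~$s$.
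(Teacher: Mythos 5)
The paper does not prove this lemma; it cites it verbatim from \cite{AbstractSepSys} (Lemmas 4.4 and 4.5), so there is no internal proof to compare against. Your reconstruction is correct and follows the standard argument.

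Two minor points worth flagging, neither a gap. First, in the degenerate case you assert $\dcl(\{\vs\})$ is an orientation without explicitly checking antisymmetry; this does hold, since $\vr\le\vs$ and $\rv\le\vs$ together give $\vs=\sv\le\vr\le\vs$ and hence $r=s$, but it deserves a line. (Your consistency computation actually repeats this same collapse, so the observation costs nothing.) Second, in the case $\sv\le\vr$ of the star argument you should note that the inequality is strict: $\sv=\vr$ would put both orientations of $r$ in $O$, contradicting antisymmetry, and only the strict version is what the consistency definition forbids. With those two sentences added the proof is complete.
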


The splitting stars of the edge tree set $\vE(T) $ of a tree~$ T$, for example, are the sets $\vec F_t$ of edges at a node~$t$ all oriented towards~$t$. By this correspondence, the nodes of~$T$ can be recovered from its edge tree set; if $T$ is finite, its nodes correspond bijectivly to its consistent orientations.

Given two separation systems $ R,S $, a map $ f\colon \vR\to \vS $ is a {\it homomorphism} of separation systems if it commutes with their involutions and respects the ordering on~$\vR$. Formally, we say that $ f $ \textit{commutes with the involutions} of $\vR$ and~$\vS$ if $ \braces{f(\vr)}^*=f(\rv) $ for all ${\vr\in \vR}$. It \textit{respects the ordering} on~$\vR$ if $ f({\vrone})\le f({\vrtwo}) $ whenever $ {\vrone}\le{\vrtwo} $. Note that the condition for $ f $ to be order-respecting is not `if and only if': we allow that $ f({\vrone})\le f({\vrtwo}) $ also for incomparable~$ {\vrone},{\vrtwo}\in R $. Furthermore~$ f $ need not be injective. It can therefore happen that ${\vrone < \vrtwo}$ but $f(\vrone) = f(\vrtwo)$, so $f$ need not preserve strict inequality. A~bijective homomorphism of separation systems whose inverse is also a homomorphism is an {\em isomorphism\/}.

\section{Inverse limits of sets}\label{sec:limits}

Let $(P,\le)$ be a {\em directed} poset, one such that for all $p,q\in P$ there is an $r\in P$ such that $p,q\le r$. A~family $\X = (\,X_p\mid p\in P\,)$ of sets $X_p$ is an {\em inverse system\/} if it comes with maps $f_{qp}\colon X_q\to X_p$, defined for all $q > p$, which are compatible in that $f_{rp} = f_{qp}\circ f_{rq}$ whenever $p < q < r$.%
   \footnote{If desired, this can be extended to all $p\le q\le r$ in~$P$ by letting $f_{pp} = {\rm id}_{X_p}$ for all $p\in P$.}
   If all the~$f_{qp}$ are surjective, we call~$\X$ a {\em surjective inverse system\/}. Any inverse system $(\,Y_p\mid p\in P\,)$ with $Y_p\sub X_p$ and maps $f_{qp}\restricts Y_q$ is a {\em restriction\/} of~$\X$.

A family $(\,x_p\mid p\in P\,)$ with $x_p\in X_p$ such that $f_{qp}(x_q) = x_p$ for all $p < q$ is a {\em limit} of~$\X$. The set $\varprojlim\X$ of all limits of $\X$ is the {\em inverse limit} of~$\X$. If each of the $X_p$ carries a topology, then $\varprojlim\X$ is a subspace of the product space $\prod_{p\in P} X_p$, and we give $\X$ the subspace topology of the product topology on this space. In this paper, all the sets $X_p$ will be finite and carry the discrete topology. This makes the maps $f_{qp}$ continuous and $\prod_{p\in P} X_p$ compact, so $\X$ is compact too since it is closed in~$\prod_{p\in P} X_p$.%
   \COMMENT{}

The following folklore `compactness theorem' ensures that limits of such inverse systems exist; see, e.g., \cite{ProfiniteGroups} for an introduction to inverse limits including its standard short proof.%
   \COMMENT{}

\begin{LEM}\label{CompactnessThm}
The inverse limit of any inverse system of non-empty discrete finite sets is non-empty, Hausdorff and compact.
\end{LEM}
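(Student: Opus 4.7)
The plan is to dispatch the three assertions in turn, with only non-emptiness requiring any real work.

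For the topological assertions: each $X_p$ is finite discrete, hence compact Hausdorff, so by Tychonoff's theorem the product $\prod_{p\in P}X_p$ is compact and Hausdorff. Hausdorffness is inherited by $\varprojlim\X$ as a subspace. For compactness it is enough to exhibit $\varprojlim\X$ as a closed subset of the product; and indeed, setting $C_{p,q}:=\{(x_r)_{r\in P}\in\prod_{r}X_r : f_{qp}(x_q)=x_p\}$ for each pair $p<q$ in $P$, each $C_{p,q}$ depends on only two discrete coordinates and is therefore clopen, so $\varprojlim\X=\bigcap_{p<q}C_{p,q}$ is closed in the compact product.

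The substance of the lemma is non-emptiness. Since the product is compact, it suffices to show that the family $(C_{p,q})_{p<q}$ has the finite intersection property. Given finitely many pairs $(p_i,q_i)$ with $p_i<q_i$, I would invoke directedness of $P$ iteratively to find a single $r\in P$ with $r\ge q_i$ for every $i$. Picking any $x_r\in X_r$ (possible because $X_r$ is non-empty by hypothesis), I then define a tuple $(y_p)_{p\in P}$ by $y_p:=f_{rp}(x_r)$ whenever $p\le r$ and by choosing $y_p$ arbitrarily otherwise. The compatibility condition $f_{rp}=f_{qp}\circ f_{rq}$ for $p<q\le r$ then shows at once that $(y_p)_p$ lies in every $C_{p_i,q_i}$, so the intersection of these finitely many closed sets is non-empty.

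The main obstacle, really more a bookkeeping observation than a genuine difficulty, is to recognize that the inverse limit is cut out of the product by conditions involving only two coordinates at a time, so that compactness of the product reduces non-emptiness to a purely finitary statement: lift finitely many pairs to a common index using directedness, and pull back a single element of that index via the bonding maps. Once this viewpoint is adopted, everything follows from Tychonoff and the directedness of $P$ with no further argument.
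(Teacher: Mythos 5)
Your proof is correct, and it is essentially the standard proof of this folklore fact. The paper itself does not supply a proof of this lemma: it cites it as well known and refers the reader to a textbook on profinite groups for "its standard short proof." What you have written out \emph{is} that standard short proof, so there is nothing for it to diverge from. The decomposition into (a) Tychonoff for compactness and Hausdorffness of the product, (b) closedness of the inverse limit via the clopen two-coordinate conditions $C_{p,q}$, and (c) non-emptiness via the finite intersection property, lifting finitely many constraints to a common upper bound $r$ by directedness and pushing an element of $X_r$ down along the bonding maps, is exactly the canonical argument. One tiny bookkeeping point, which the paper itself flags in a footnote: for $p=r$ the map $f_{rr}$ is not part of the data, so you should either set $y_r:=x_r$ directly or adopt the convention $f_{pp}=\mathrm{id}_{X_p}$; with that in place the compatibility check $f_{q_ip_i}(y_{q_i})=f_{q_ip_i}(f_{rq_i}(x_r))=f_{rp_i}(x_r)=y_{p_i}$ goes through exactly as you say.
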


Thus, in any inverse system of finite sets we can pick one element from each of them so that these choices commute with the maps~$f_{qp}$. When $P$ is the set of natural numbers, this becomes K\"onig's familiar infinity lemma.

\section{Profinite separation systems}\label{sec:limsep}

For this entire section, let $P$ be a directed poset and $\S = (\,\vSp\mid p\in P\,)$ a family of finite separation systems%
   \COMMENT{}
   $(\vSp,\le,{}^*)$ with homomorphisms $f_{qp}\colon \vSq\to\vSp$ for all $p<q$. Then $\vSinf := \varprojlim\S$ becomes a separation system $(\vSinf,\le,{}^*)$ if we define
  \begin{itemize}
\item $(\,\vrp\mid p\in P\,) \le (\,\vsp\mid p\in P\,)\> :\Leftrightarrow\ \forall p\in P\colon \vrp\le\vsp$\quad and
\item $(\,\vsp\mid p\in P\,)^* := (\,\svp\mid p\in P\,)$.%
   \COMMENT{}
\end{itemize}
We call $(\vSinf,\le,{}^*)$ the {\em inverse limit\/} of the separation systems $(\vSp,\le,{}^*)$. Separation systems that are isomorphic to the inverse limit of some finite separation systems are {\em profinite}.

For example, let $V$ be any set, $(\vS_V,\le,{}^*)$ the system of set separations of~$V\!$, and suppose that $P$ is the set of finite subsets of~$V\!$ ordered by inclusion.%
   \COMMENT{}
    For each $p\in P$ assume that $(\vSp,\le,{}^*)$ is the system of all the set separations of~$p$, and that the $f_{qp}$ are restrictions: that for $q>p$ and $\vsq = (A,B)\in\vSq$ we have
 $$f_{qp}(\vsq) = \vsq\restricts p := (A\cap p, B\cap p).$$
 Then every limit $\vs = (\,\vsp\mid p\in P\,) \in \vS = \invlim\S$ defines an oriented separation $(A,B)$ of~$V$ by setting $A:= \bigcup_{p\in P} A_p$ and $B:= \bigcup_{p\in P} B_p$ for $\vsp  = :(A_p,B_p)$. This $(A,B)$ is the unique separation of~$V$ such that $(A\cap p, B\cap p) = \vsp$ for all~$p$, and $\vs\mapsto (A,B)$ is easily seen to be an isomorphism between the separation systems $\vSinf$ and~$\vS_V$.

Although we shall mostly work with abstract separation systems and their inverse limits in this paper, it will be useful to keep this example in mind. To help support this intuition, we shall usually write the maps $f_{qp}$ as restrictions, i.e., write
 $$\vsq\restricts p := f_{qp}(\vsq)$$
 whenever $\vsq\in\vSq$ and $q>p$, as well as
 $$\vs\restricts p := \vsp$$
 whenever $\vs = (\,\vsp\mid p\in P\,)\in\vSinf$, even when the separations considered are not separations of sets. Similarly, given $s\in S$, or $s\in S_q$ with $q\ge p$, we write $s\restricts p := \{\vsp, \svp\}$ for the separation in~$S_p$ with orientations $\vsp := \vs\restricts p$ and $\svp := \sv\restricts p$.

Given a subset $O\sub\vSinf$, we write
 $$O\restricts p := \{\,\vs\restricts p\mid \vs\in O\,\}.$$%
   \COMMENT{}
    We shall refer to these%
   \COMMENT{}
    as the {\em projections\/} of $\vsq$, $\vs$ and $O$ to~$p$. Note that $\vS\restricts p = \vSq\restricts p \sub \vSp$ for all $p$ and $q\ge p$, with equality if our inverse system~$\S$ is surjective.

Even if our inverse system $(\,\vSp\mid p\in P\,) $ is not surjective, it induces the surjective inverse system $(\,\vS\restricts p\mid p\in P\,) $, whose inverse limit is again~$ \vS $ with the same topology. When we consider a given profinite separation system in this paper and need to work with a concrete inverse system of which it is the inverse limit, we can therefore always choose this inverse system to be surjective.

By Lemma~\ref{CompactnessThm}, our separation system $\vSinf=\varprojlim\S$ is a compact topological space. Every subset $O$ of $\vS$ induces a restriction of~$\S$, the inverse system $(\,O\restricts p\mid p\in P\,)$. While clearly $O\sub \invlim (\,O\restricts p\mid p\in P\,)$,%
   \COMMENT{}   
   this inclusion can be strict. In fact, $\invlim\! (\,O\restricts p\mid p\in P\,)$ is precisely the topological closure $\overline O$ of $O$ in~$\vS$:\looseness=-1

\begin{LEM}\label{closed}
\begin{enumerate}[\rm (i)]\itemsep=0pt
   \item The topological closure in~$\vS$ of a set $O\sub\vSinf$ is the set of all limits $\vs = (\,\vsp\mid p\in P\,)$ with $\vsp\in O\restricts p$ for all~$p$.
   \item A set $O\sub\vS$ is closed in~$\vSinf$ if and only if there are sets $O_p\sub\vSp$, with $f_{qp}\restricts O_q\sub O_p$ whenever $p<q\in P$, such that $O =%
   \COMMENT{}
   \varprojlim\, (\,O_p \mid p\in P\,)$.%
   \COMMENT{}
   \end{enumerate}
   \end{LEM}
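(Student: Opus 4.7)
The plan is to reduce both parts to a good description of the neighborhood base at a point of $\vSinf$. Since the product topology on $\prod_{p\in P}\vSp$ has basic open sets specified by finitely many coordinates, and since $P$ is directed, for any $\vs\in\vSinf$ and any finite set $p_1,\dots,p_n\in P$ we can pick $q\ge p_1,\dots,p_n$ and obtain a smaller basic neighborhood in $\vSinf$ just by fixing the single coordinate $q$ (because fixing $\vs\restricts q$ forces $\vs\restricts p_i=f_{qp_i}(\vs\restricts q)$ for each $i$). Thus the sets
\[
U_p(\vs):=\{\,\vt\in\vSinf\mid \vt\restricts p=\vsp\,\},\qquad p\in P,
\]
form a neighborhood base of $\vs$ in $\vSinf$. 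This is the one technical observation I would establish first; it is the place where directedness of $P$ is used.

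For (i), I would then unpack the definition of closure against this base. If $\vs\in\overline O$ then for every $p\in P$ the neighborhood $U_p(\vs)$ meets $O$, giving some $\vt\in O$ with $\vt\restricts p=\vsp$, so $\vsp\in O\restricts p$. Conversely, if $\vs\in\vSinf$ satisfies $\vsp\in O\restricts p$ for every $p$, then for each $p$ some $\vt\in O$ witnesses $\vt\restricts p=\vsp$, so $\vt\in U_p(\vs)\cap O$; hence every basic neighborhood of $\vs$ meets $O$ and $\vs\in\overline O$.

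For the backward direction of (ii) I would argue directly from the product topology: each $O_p\sub\vSp$ is clopen (the $\vSp$ are finite discrete), so $\prod_{p\in P}O_p$ is closed in $\prod_{p\in P}\vSp$; intersecting with the closed set $\vSinf=\varprojlim\S$ yields $\varprojlim(O_p\mid p\in P)$, which is therefore closed in $\vSinf$. (The compatibility condition $f_{qp}\restricts O_q\sub O_p$ plays no role here; it is only stated so that the inverse limit notation makes literal sense.) For the forward direction, given a closed $O$ I would simply set $O_p:=O\restricts p$; compatibility $f_{qp}(O\restricts q)\sub O\restricts p$ is immediate from $\vs\restricts p=(\vs\restricts q)\restricts p$, and part~(i) together with $O=\overline O$ gives $O=\varprojlim(O\restricts p\mid p\in P)$.

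The only step that needs genuine care is the initial reduction of the neighborhood base to single-coordinate sets $U_p(\vs)$: one has to invoke directedness and the commutativity $f_{qp}\circ f_{rq}=f_{rp}$ of the bonding maps. Everything after that is a direct unpacking of definitions, and part~(ii) then falls out of part~(i) together with the observation that a product of finite discrete sets in a Tychonoff product is closed.
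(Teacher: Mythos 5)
Your proof is correct. Parts (i) and the forward direction of (ii) are exactly as the paper argues (the paper dismisses (i) as ``straightforward from the definition of the product topology'' and you supply the detailed unpacking via the single-coordinate neighborhood base $U_p(\vs)$, which is the right way to make that precise). The one place where you genuinely diverge is the backward direction of (ii): the paper notes that $\varprojlim(O_p\mid p\in P)$ is a compact subspace of the Hausdorff space $\vS$ and invokes Lemma~3.1, whereas you observe that $\varprojlim(O_p\mid p\in P)=\bigl(\prod_p O_p\bigr)\cap\vS$, a finite-discrete product of closed sets intersected with the closed set $\vS$ inside $\prod_p\vSp$. Your route is more elementary: it needs only that projections are continuous and that $\vS$ is closed in the ambient product, and it sidesteps Lemma~3.1 entirely (which also means you do not have to worry separately about the corner case where some $O_p=\es$, since your argument is indifferent to whether the resulting limit is empty). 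Your parenthetical remark that the compatibility condition $f_{qp}\restricts O_q\sub O_p$ is only there to make the notation $\varprojlim$ literally well-formed, and plays no role in the closedness argument, is also a fair and accurate observation.
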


\begin{proof}
Assertion (i) is straightforward from the definition of the product topology on~$\vS$. The forward implication of~(ii) follows from~(i) with $O_p = O\restricts p$. For the other direction,%
   \COMMENT{}
   note that $\varprojlim\, (\,O_p \mid p\in P\,)$ is a compact subspace of the Hausdorff space~$\vS$ (Lemma~\ref{CompactnessThm}).
\end{proof}

An immediate consequence of Lemma~\ref{closed} is that every separation system ${\tau\subseteq\vS}$ that is closed in $ \vS $ is itself profinite and can therefore be studied independently of its ambient system $ \vS $. We shall later have to deal in particular with nested subsystems of given separation systems. We shall therefore study profinite nested separation systems and tree sets first (Section~\ref{sec:limtreesets}).

Lemma~\ref{closed} will be our main tool for checking whether orientations of separation systems are closed. Let us illustrate how this is typically done.

\begin{EX}\label{ex:ray}
Consider a ray $ G=v_1e_1v_2e_2v_3\dots$, with end~$ \omega $ say, and let $ \vR$ be the separation system consisting of all separations of order~$ <k $ of the graph $ G $ for some fixed integer $k\ge 2$. Let $ P $ be the set of all finite subsets of $ V=V(G) $, and for $ p\in P $ let $ \vSp $ be the set of separations of order~$ <k $ of the subgraph~$ G[p] $. With restriction as bonding maps, this makes $ \S=(\vSp\mid p\in P) $ into an inverse system, with $\vR\simeq\vS = \invlim\!\S$ via $ (A,B)\mapsto (\,(A\cap p\,,\,B\cap p)\mid p\in P\,) $ for all $ (A,B) \in\vR$, as earlier.

Now let $ O $ be the orientation of $R$ `towards~$ \omega $', the set of all separations $(A,B)\in\vR $ such that $G[B]$ contains a tail of~$G$. Note that since $k$ is finite, $O$~is well defined and consistent. Clearly, $(V,\es)\notin O$.%
   \COMMENT{}
   It is also straightforward to check from first principles that $(V,\es)$ lies in the closure of~$O$.%
   \COMMENT{}

Instead, let us use Lemma~\ref{closed} to prove this. We just have to find separations $\vsp\in O\restricts p$ such that $\vs = (\,\vsp\mid p\in P\,)$ corresponds to~$(V,\es)$ under our isomorphism between $\vR$ and~$\vS$. Let $\vsp:= (p,\es)$ for each $p\in P$. These $\vsp$ lie in~$O\restricts p$, because $\vsp = \vr\restricts p$ for $\vr = (p, V\sm p)\in O$. But we also have $\vsp = (V,\es)\restricts p$ for each~$p$, so $(V,\es)$ corresponds to $\vs = (\,\vsp\mid p\in P\,)$ as desired.\qed%
   \COMMENT{}
\end{EX} 

In Example~\ref{ex:ray} we expressed $ (V,\emptyset) $, via the isomorphism $\vR\to\vS$, as the limit $\vs = (\,\vsp\mid p\in P\,)$ of its projections $\vsp = (p,\es)$. It can also be expressed as a limit in $\vR$ itself in various ways. For example, $(V,\es)$ is the supremum in~$\vR$ of the chain $ C=\{\,(A_n,B_n)\mid n\in\N\,\} \sub O$ where $A_n := \{\,v_i\mid i \le n\,\}$ and $B_n := \{\,v_i\mid i\ge n\,\}$. Topologically, $(V,\es)$~lies in the closure in~$ \vS $ not only of~$O$ but also of~$ C$.

Our next lemma generalizes these observations to our abstract~$\vS$: chains in~$ \vS $ always have a supremum, which lies in their topological closure.

\begin{LEM}\label{lem:chains}
If $ C\sub\vS $ is a non-empty chain, then $ C $ has a supremum and an infimum in~$ \vS $. Both these lie in the closure of $ C $ in~$ \vS $.
\end{LEM}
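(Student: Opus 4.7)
The plan is to construct the supremum coordinate by coordinate, using finiteness of each $\vSp$, and then to check that the resulting limit lies in $\overline C$ via Lemma~\ref{closed}(i). The infimum will be entirely analogous.

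First I would observe that for every $p\in P$ the projection $C\restricts p\sub\vSp$ is totally ordered: indeed, if $\vs,\vt\in C$ satisfy $\vs\le\vt$, then by the order-respecting property of the homomorphism $f_{qp}$ (applied via $\vs\restricts p=f_{qp}(\vs\restricts q)$ for any $q\ge p$ large enough) we get $\vs\restricts p\le\vt\restricts p$. Since $\vSp$ is finite, the totally ordered set $C\restricts p$ has a (unique) maximum element $\vsp^{\max}$ and a minimum element $\vsp^{\min}$.

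Next I would verify that $\vs^{\max}:=(\vsp^{\max}\mid p\in P)$ is a limit, i.e.\ that $f_{qp}(\vsq^{\max})=\vsp^{\max}$ for all $p<q$. Since $f_{qp}$ is order-respecting, it maps the maximum of $C\restricts q$ to an upper bound of $f_{qp}(C\restricts q)=C\restricts p$; and because $f_{qp}(\vsq^{\max})$ itself lies in $C\restricts p$, this upper bound is attained, hence equals $\vsp^{\max}$. Thus $\vs^{\max}\in\vS$. That $\vs^{\max}=\sup C$ is then immediate from the coordinate-wise definition of $\le$ on~$\vS$: every $\vt\in C$ satisfies $\vt\restricts p\le\vsp^{\max}$ for all $p$, so $\vt\le\vs^{\max}$; and any upper bound $\vu\in\vS$ of $C$ projects in each coordinate to an upper bound of $C\restricts p$, so $\vsp^{\max}\le\vu\restricts p$ for all $p$, giving $\vs^{\max}\le\vu$. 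Finally, since $\vsp^{\max}\in C\restricts p$ for every $p$, Lemma~\ref{closed}(i) yields $\vs^{\max}\in\overline C$.

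The infimum $\vs^{\min}:=(\vsp^{\min}\mid p\in P)$ is handled by the same argument with minima in place of maxima; equivalently, one may apply the supremum case to the chain $C^*:=\{\,\sv\mid\vs\in C\,\}$ and invoke the involution. The only step that requires genuine care is the compatibility check $f_{qp}(\vsq^{\max})=\vsp^{\max}$: here it matters that we use the concrete description of $C\restricts p$ as the $f_{qp}$-image of $C\restricts q$, together with finiteness of $\vSp$, rather than trying to invoke injectivity of the homomorphisms (which is not assumed).
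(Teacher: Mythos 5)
Your proof is correct and follows essentially the same route as the paper's: build the candidate supremum coordinate-wise from the maxima of the finite chains $C\restricts p$, check compatibility with the bonding maps using that they respect order and that $f_{qp}(C\restricts q)=C\restricts p$, and invoke Lemma~\ref{closed}(i) for membership in $\overline C$. The only cosmetic difference is that you verify compatibility directly, whereas the paper phrases the same order-respecting argument as a proof by contradiction.
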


\begin{proof}
We prove the assertion about suprema; the proof for infima is analogous.

For each $ p\in P $, let $\vsp$ be the maximum of the finite chain $ C\restricts p $ in~$\vSp$. If these $\vsp$ are compatible, in the sense that $ \vs := (\,\vsp\mid p\in P\,)\in\vS$,%
   \COMMENT{}
   then $\vs$ is clearly the supremum of~$C$,%
   \COMMENT{}
   and it will lie in its closure by Lemma~\ref{closed}.

To show that the $\vsp$ are compatible, suppose that $ \vsq\restricts p\ne\vsp $ for some $p<q$. Then $ \vsq\restricts p<\vsp $, since $\vsq\restricts p$ lies in $(C\restricts q)\restricts p = C\restricts p$, whose maximum is~$\vsp$. Pick any $ \vr\in C $ with $ \vr\restricts p=\vsp $. Then $ \vr\restricts q\ne\vsq $ and hence $ \vr\restricts q<\vsq $, while $f_{qp} (\vr\restricts q) = \vr\restricts p = \vsp > \vsq\restricts p =  f_{qp} (\vsq)$. Hence $ f_{qp} $ does not respect the ordering on~$\vSq$, which contradicts our assumption that it is a homomorphism of separation systems.
\end{proof}

A direct consequence of Lemma~\ref{lem:chains} is that, in closed subsets of $ \vS $, every separation lies below some maximal element and above some minimal element:

\begin{LEM}\label{lem:closedsplitting}
If $O\sub\vSinf$ is closed in~$\vSinf$, then for every $\vs\in O$ there exist in~$O$ some $\vr\le\vs$ that is minimal in~$O$ and some $\vt\ge\vs$ that is maximal in~$O$.
\end{LEM}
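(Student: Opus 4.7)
The plan is to apply Zorn's lemma together with Lemma~\ref{lem:chains}. I will prove the statement about minimal elements; the maximal case is entirely analogous.

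Fix $\vs\in O$ and consider the sub-poset $O_{\le\vs}:=\{\,\vr\in O : \vr\le\vs\,\}$, which is non-empty as it contains~$\vs$. I claim that every non-empty chain $C\sub O_{\le\vs}$ has a lower bound in~$O_{\le\vs}$. Indeed, by Lemma~\ref{lem:chains} the chain $C$ has an infimum $\vr_C$ in~$\vS$, and $\vr_C$ lies in the closure of~$C$. Since $C\sub O$ and $O$ is closed in~$\vS$, this closure is contained in~$O$, so $\vr_C\in O$. Because $\vr_C$ lies below every element of~$C$, and each element of~$C$ lies below~$\vs$, we get $\vr_C\le\vs$, hence $\vr_C\in O_{\le\vs}$. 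So Zorn's lemma (in its dual form) furnishes a minimal element $\vr$ of $O_{\le\vs}$.

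It remains to check that $\vr$ is minimal in the whole of~$O$. But any $\vr'\in O$ with $\vr'<\vr$ would satisfy $\vr'<\vr\le\vs$, so $\vr'\in O_{\le\vs}$, contradicting the minimality of~$\vr$ in~$O_{\le\vs}$. The argument for a maximal $\vt\ge\vs$ proceeds verbatim, using the supremum provided by Lemma~\ref{lem:chains} in place of the infimum, and working in the upward slice $O_{\ge\vs}$.

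There is no real obstacle: the only nontrivial input is Lemma~\ref{lem:chains}, which already guarantees that suprema and infima of chains in~$\vS$ exist and lie in the topological closure of the chain. The closedness hypothesis on~$O$ is used precisely once, to transfer these extrema from~$\vS$ back into~$O$, and the passage from minimal-in-$O_{\le\vs}$ to minimal-in-$O$ is immediate.
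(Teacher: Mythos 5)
Your proof is correct and rests on exactly the same two ingredients as the paper's — Lemma~\ref{lem:chains} to produce infima/suprema of chains, and closedness of $O$ to pull them back into $O$. The only cosmetic difference is in how Zorn's lemma is deployed: the paper takes a single maximal chain in $O$ through $\vs$ and reads off both the minimal and the maximal element as the infimum and supremum of that one chain, whereas you apply Zorn separately to the slices $O_{\le\vs}$ and $O_{\ge\vs}$; this is a routine repackaging, not a genuinely different argument.
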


\begin{proof}
Let $ C $ be a maximal chain in $ O $ containing~$ \vs $.%
   \COMMENT{}
   By Lemma~\ref{lem:chains}, $ C $~has an infimum~$\vr$ and a supremum $ \vt $ in $ \vS $. By the lemma these lie in~$ \overline{C} $, and hence in~$ O $ since $ O $ is closed. Then $ \vr $ is minimal and $\vt$ is maximal in $ O $, by the maximality of~$ C $, and $ \vr\le\vs\le\vt $ as desired.
\end{proof}

Recall that we shall be interested in tree sets%
   \COMMENT{}
   $\tau\sub\vSinf$, and in particular in their consistent orientations. As we noted earlier, the consistent orientations~$O$ of~$\tau$ can be retrieved as $O = \dcl(\sigma)$ from just their sets $\sigma$ of maximal elements, the splitting stars of~$\tau$---but only if every $\vr\in O$ lies below some maximal element of~$O$.
Lemma~\ref{lem:closedsplitting} says that this is the case when $O$ is closed in~$\vS$:%
   \COMMENT{}

\begin{COR}\label{closedsplits}
A consistent orientation of a nested separation system $\tau\sub\vS$ splits if it is closed in~$\vS$.\qed%
   \COMMENT{}
\end{COR}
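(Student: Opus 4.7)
The plan is to derive this almost immediately from Lemma~\ref{lem:closedsplitting}. Unfolding the definition of ``splits'', we must show that if $O$ is a closed consistent orientation of $\tau$ and $\sigma$ is the set of maximal elements of~$O$, then $O\sub\dcl(\sigma)$. (The other condition in the definition, that $\sigma$ be precisely the maximal elements of~$O$, holds by our choice of~$\sigma$.)

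So let $\vs\in O$ be arbitrary. Since $O$ is a consistent orientation of the nested separation system $\tau$, it is a subset of the ambient profinite separation system~$\vS$, and by hypothesis it is closed in~$\vS$. Thus Lemma~\ref{lem:closedsplitting} applies to~$O$ and yields some $\vt\ge\vs$ that is maximal in~$O$. By definition of~$\sigma$ we have $\vt\in\sigma$, and hence $\vs\in\dcl(\sigma)$, as required.

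There is no real obstacle: the corollary is essentially a restatement of the ``maximal element above'' half of Lemma~\ref{lem:closedsplitting} in the language of splitting stars. The only thing worth double-checking is that maximality of $\vt$ in~$O$ (as given by the lemma, where $O$ is regarded as a subset of~$\vS$) coincides with maximality of $\vt$ in~$O$ regarded as an orientation of~$\tau$; but this is immediate since $O\sub\vec\tau$ and the orders agree on~$\vec\tau$.
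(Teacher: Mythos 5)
Your proof is correct and follows exactly the route the paper intends: the corollary is stated with a bare \qed because the preceding discussion (splitting means every element of $O$ lies below a maximal element) plus Lemma~\ref{lem:closedsplitting} is the whole argument. You have just spelled out the routine unfolding of the definition of ``splits'' and applied the ``maximal element above'' half of that lemma, noting correctly that maximality in $O$ is unambiguous since the order on $\tau$ is inherited from~$\vS$.
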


In Section~\ref{sec:orientations} we shall see that Lemma~\ref{closedsplits} does not have a direct converse: profinite tree sets can have consistent orientations that split but are not closed in these tree sets.%
   \COMMENT{}

\section{Profinite nested separation systems}\label{sec:limtreesets}

Let us again consider a profinite separation system $\vS = \varprojlim\,(\,\vSp\mid p\in P\,)\ne\es$ with bonding maps $f_{qp}$, fixed throughout this section. Our aim in this section is to study the relationship between $ \vS $ and the projections~$ \vSp $. This will enable us later to lift theorems about finite separation systems to profinite ones.

We are particularly interested in the cases where the $ \vSp $ (and hence $ \vS $, as we shall see) are nested; we shall then study the interplay between the splitting stars of $ \vS $ and those of the~$ \vSp $. Understanding this interplay will be crucial to lifting tangle duality or tangle-tree theorems about finite separation systems to profinite ones, as both these types of theorem are about tree sets and their splitting stars.

\COMMENT{}



We begin our study with some basic observations. Since the $\vSp$ form an inverse system of separations, and $\vS$ is its inverse limit, we have
\begin{equation}\label{*}
\vr\le\vs\ \Rightarrow\ \vr\restricts p \le \vs\restricts p
\end{equation}
for all $\vr,\vs\in\vSq$ with $q\ge p$ or $\vr,\vs\in\vS$. In particular, projections of nested sets of separations are nested. Thus if our inverse system $(\,\vSp\mid p\in P\,)$ is surjective and $\vS$ is nested, then so are all the~$\vSp$. Projections of stars are stars, as long as they contain no degenerate separation (which are not allowed in stars).%
   \COMMENT{}

Projections also commute with inversion: $\sv\!\restricts p = (\vs\!\restricts p)^*$ for all $\!\vs$ as~above.%
   \COMMENT{}
   Hence projections of small separations are small, and projections of trivial separations are trivial if $p$ is large enough to distinguish them from their witness:

\begin{LEM}
Let $\vr = (\,\vrp\mid p\in P\,)\in\vS$ and $\vs = (\,\vsp\mid p\in P\,)\in\vS$ be such that $\vr$ is trivial in~$\vS$ and $s$ witnesses this. Then there exists $p\in P$ such that, for all $q\ge p$ in~$P$, the separation $\vrq$ is trivial in~$\vSq$ and $s_q$ witnesses this.
\end{LEM}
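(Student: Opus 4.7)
The plan is to unpack what triviality of $\vr$ in $\vS$ witnessed by $s$ really gives us, namely the two strict inequalities $\vr<\vs$ and $\vr<\sv$ in $\vS$, and then show that each of these strict inequalities transfers to the projections $\vSq$ as soon as $q$ is large enough. The weak inequalities are immediate from~\eqref{*}: from $\vr\le\vs$ and $\vr\le\sv$ we get $\vrq\le\vsq$ and $\vrq\le\svq$ for every $q\in P$. So the only content of the lemma is the strictness, i.e., showing that $\vrq\ne\vsq$ and $\vrq\ne\svq$ hold from some point on in~$P$.

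For this I would use the defining property of the inverse limit: two elements of $\vS$ are equal precisely when all their coordinates agree. Since $\vr\ne\vs$ in~$\vS$, there is some $p_1\in P$ with $\vr_{p_1}\ne\vs_{p_1}$; likewise, since $\vr\ne\sv$, there is some $p_2\in P$ with $\vr_{p_2}\ne\sv_{p_2}$. Using directedness of~$P$, pick $p\in P$ with $p\ge p_1,p_2$.

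Now for any $q\ge p$, the bonding maps give $\vrq\restricts p_1=\vr_{p_1}\ne\vs_{p_1}=\vsq\restricts p_1$, so $\vrq\ne\vsq$; combined with the weak inequality from~\eqref{*} this yields $\vrq<\vsq$. The symmetric argument at $p_2$ gives $\vrq<\svq$. Hence $s_q$ witnesses the triviality of $\vrq$ in~$\vSq$, as required.

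I do not expect any real obstacle here: the argument is essentially a pigeonhole plus directedness. The only point worth being slightly careful about is that triviality in~$\vSq$ requires both $\vrq<\vsq$ and $\vrq<\svq$ to be strict, which is why we need both $p_1$ and $p_2$ and must join them via directedness; choosing $p\ge p_1$ alone would not suffice.
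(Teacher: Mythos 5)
Your proof is correct and takes essentially the same route as the paper's: both start from $\vr\notin\{\vs,\sv\}$, find a large enough $p$ so that $\vrq\notin\{\vsq,\svq\}$ for all $q\ge p$, and combine this with the weak inequalities from~\eqref{*} to get the required strict inequalities. The paper compresses the pigeonhole-plus-directedness step into one sentence; you spell it out via $p_1$, $p_2$ and a common upper bound.
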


\begin{proof}
Since $s$ witnesses the triviality of~$\vr$, we have $\vr\notin\{\vs,\sv\}$. Hence there exists $p\in P$ such that $\vrq\notin\{\vsq, \svq\}$ for all $q\ge p$. For all such~$q$ we have $\vrq < \vsq$ as well as $\vrq < \sv\restricts q = \svq$ by~\eqref{*}, so $s_q$ witnesses the triviality of~$\vrq$ in~$\vSq$.
\end{proof}

Locally, for $p<q$, it can happen that trivial separations of~$\vSq$ project to nontrivial ones in~$\vSp$ if the witnesses $s_q$ of the trivialiy of $\vrq$ in~$\vSq$ project to the same separation in~$\vSp$ as $r_q$ does.%
   \COMMENT{}
   However, if $f_{qp}$ is onto, then nontrivial separations in~$\vSp$ will always also have a nontrivial preimage in~$\vSq$:

\begin{LEM}\label{LemmaD}
If $f_{qp}\colon \vSq\to\vSp$ is surjective, then for every $\vrp\in\vSp$ that is not trivial in~$\vSp$ there exists $\vrq\in\vSq$ that is not trivial in~$\vSq$ and such that $\vrq\proj p = \vrp$.
\end{LEM}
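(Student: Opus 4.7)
My plan is to exploit the surjectivity by choosing a preimage of $\vrp$ that is as large as possible in $\vSq$, and then argue that any such maximal preimage must be non-trivial. The set $f_{qp}^{-1}(\vrp)$ is non-empty by surjectivity and finite because $\vSq$ is finite, so it has at least one $\le$-maximal element $\vrq$; I aim to show that this $\vrq$ does the job.

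Suppose, for a contradiction, that $\vrq$ is trivial in $\vSq$, witnessed by some separation $s_q$ with $\vrq < \vsq$ and $\vrq < \svq$; by the preceding lemma this also gives $\vrq\notin\{\vsq,\svq\}$. Projecting via~\eqref{*}, we obtain $\vrp = \vrq\restricts p \le \vsq\restricts p$ and $\vrp \le \svq\restricts p$; write $\vsp := \vsq\restricts p$, so that $\svp = \svq\restricts p$ as well. The argument now turns on whether these two inequalities are strict.

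If $\vrp = \vsp$, then $\vsq$ is itself a preimage of $\vrp$ under $f_{qp}$ that is strictly greater than $\vrq$ in $\vSq$, contradicting the maximality of $\vrq$ in $f_{qp}^{-1}(\vrp)$; the case $\vrp = \svp$ is symmetric, using $\svq$ as a preimage in place of $\vsq$. In the remaining case we have $\vrp < \vsp$ and $\vrp < \svp$ strictly, so in particular $r_p\ne s_p$ as unoriented separations of $\vSp$, and then $s_p$ witnesses the triviality of $\vrp$ in $\vSp$, contradicting the hypothesis that $\vrp$ is not trivial in $\vSp$.

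The main conceptual obstacle, which the first two cases address, is precisely the phenomenon flagged by the paragraph preceding the lemma: a separation that is trivial in $\vSq$ may well project to a non-trivial one in $\vSp$, because the witness $s_q$ can collapse onto~$\vrp$ (so that $\vsp$ or $\svp$ coincides with $\vrp$). The maximality of $\vrq$ among all preimages of $\vrp$ is exactly what forbids this collapse, since the collapsed witness would itself be a strictly larger preimage. No finiteness of $P$ is used; only the finiteness of each individual $\vSq$ enters, to guarantee the existence of a maximal preimage.
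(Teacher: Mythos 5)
Your proof is correct and follows essentially the same route as the paper's: both choose a $\le$-maximal preimage $\vrq$ of $\vrp$ in $\vSq$ and then observe that if $\vrq$ were trivial with witness $s_q$, the maximality would force $\vrp\notin\{\vsq\proj p,\svq\proj p\}$, so that by~\eqref{*} the projection $s_q\proj p$ would witness the triviality of $\vrp$, a contradiction. (One small inaccuracy: $\vrq\notin\{\vsq,\svq\}$ follows directly from the definition of a witness of triviality, not from the preceding lemma.)
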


\begin{proof}
Let $\vrq$ be maximal in~$\vSq$ with $\vrq\proj p = \vrp$. Suppose $\vrq$ is trivial in~$\vSq$, witnessed by $s_q$ say. Then $\vrp\notin\{\vsq\proj p, \svq\proj p\}$, by the maximality of~$\vrq$. Hence $s\proj p\ne r_p$, and by~\eqref{*} it witnesses the triviality of~$\vrp$ in~$\vSp$.
\end{proof}

Let us see to what extent these observations, starting with~\eqref{*}, have a converse. Suppose we know that the projections in $\vSp$ of some elements of~$\vS$ or of $\vSq$ with $p<q$ are small, trivial, nested  or splitting stars: can we infer that these elements themselves are small, trivial, nested or splitting stars in $\vS$ or~$\vSq$?

The direct converse of~\eqref{*} will usually fail: two separations of a set, for example, can happily cross even if their restrictions to some small subset are nested. If $\vS$ is nested, however, we have a kind of converse of~\eqref{*}:

\begin{LEM}\label{*ii}
Let $\vr,\vs$ be elements of a nested set $\tau\sub\vS$, and let $p\in P$. Assume that $r\restricts p \ne s\restricts p$, that neither $\vr\restricts p$ nor $\sv\restricts p$ is trivial in~$\tau\restricts p$, and that $\vr\restricts p\le\vs\restricts p$. Then $r\ne s$ and $\vr\le\vs$.
\end{LEM}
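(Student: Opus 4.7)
The plan is that $r\ne s$ is immediate from $r\restricts p\ne s\restricts p$, so the real work is establishing $\vr\le\vs$. Since $\tau$ is nested and $r\ne s$, at least one of the four comparabilities $\vr\le\vs$, $\vr\le\sv$, $\rv\le\vs$, $\rv\le\sv$ must hold; I will rule out the three unwanted alternatives by showing that each contradicts one of the three hypotheses on the projections.

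In every unwanted case I project the comparability to $\vSp$ using the order-respecting property~$(*)$ established just before the lemma, combine with the hypothesis $\vr\restricts p\le\vs\restricts p$, and look for a triviality witness. If $\vr\le\sv$, then $\vr\restricts p\le\sv\restricts p$, so $\vr\restricts p$ lies below both orientations of $s\restricts p$; since $r\restricts p\ne s\restricts p$ these inequalities are strict, and $s\restricts p$ witnesses the triviality of $\vr\restricts p$ in $\tau\restricts p$, contradicting hypothesis. If $\rv\le\vs$, equivalently $\sv\le\vr$, the symmetric argument shows that $r\restricts p$ witnesses the triviality of $\sv\restricts p$ in $\tau\restricts p$, again contrary to hypothesis. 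If $\rv\le\sv$, equivalently $\vs\le\vr$, then projecting gives $\vs\restricts p\le\vr\restricts p$, which together with $\vr\restricts p\le\vs\restricts p$ forces $\vr\restricts p=\vs\restricts p$, whence $r\restricts p=s\restricts p$, against hypothesis.

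The argument is essentially routine once the four-case split is set up; there is no deep obstacle. The only subtleties are (a) to verify that the inequalities one produces are \emph{strict}, so as to match the definition of triviality, which here is guaranteed by $r\restricts p\ne s\restricts p$, and (b) to note that the candidate witnesses $s\restricts p$ and $r\restricts p$ automatically lie in $\tau\restricts p$ because $r,s\in\tau$, so the triviality hypothesis applied to $\tau\restricts p$ (rather than to all of $\vSp$) really does apply. Conceptually, the lemma says that order-respecting projection can collapse incomparable separations into comparable ones, but it cannot reverse a comparability unless the projected separation has already become trivial downstairs.
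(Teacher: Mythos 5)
Your proposal is correct and matches the paper's argument exactly: the paper likewise observes that $r\ne s$ follows from $r\restricts p \ne s\restricts p$, invokes nestedness to get some comparability between orientations of $r$ and $s$, and rules out each of the three alternatives to $\vr\le\vs$ by projecting via~$(*)$ and finding either a triviality of $\vr\restricts p$ or $\sv\restricts p$ in $\tau\restricts p$, or a forced equality $\vr\restricts p=\vs\restricts p$. Your write-up simply spells out the case analysis more explicitly (including the strictness check coming from $r\restricts p\ne s\restricts p$), which the paper compresses into one sentence.
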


\begin{proof}
Our assumption of $r\restricts p \ne s\restricts p$ implies $r = \{\vr,\rv\}\ne \{\vs,\sv\} = s$ by~\eqref{*}.%
   \COMMENT{}
   If orientations of $r$ and~$s$ were related in any way other than as $\vr\le\vs$ then, by~\eqref{*}, their projections to~$p$ would be related correspondingly. This would contradict our assumption of $\vr\restricts p \le\vs\restricts p$ as either $ \vr\restricts p $ or $ \sv\restricts p $ would then be trivial in $ \tau\restricts p $, or $ \vr\restricts p=\vs\restricts p $. Since $\tau$ is nested, however, $r$~and $s$ must have comparable orientations. The only possibility left, then, is that $\vr\le\vs$ as desired.
\end{proof}
   
Lemma~\ref{*ii} says that any ordering between two fixed projections of limits $\vr,\vs\in\vS$ lifts to $\vr$ and $\vs$ themselves if we assume that these are nested. Our next lemma says that this will be the case as soon as the $\vSp$ are nested:

\begin{LEM}\label{lemma3}
If every~$\vSp$ is nested, then so is~$\vS$.
\end{LEM}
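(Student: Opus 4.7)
The plan is a straightforward compactness argument on the (finite) set of possible ``comparability patterns'' for the orientations of a given pair of separations.

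Fix $r,s\in S$, with orientations $\vr,\rv\in\vS$ and $\vs,\sv\in\vS$; we must produce orientations that are comparable in~$\vS$. For each $p\in P$, since $\vSp$ is nested there exist orientations of $r\restricts p$ and $s\restricts p$ with one $\le$ the other. Encode this by defining
\[
N_p:=\bigl\{(\vr',\vs')\in\{\vr,\rv\}\times\{\vs,\sv\}\;\big|\;\vr'\restricts p\le\vs'\restricts p\bigr\}.
\]
Each $N_p$ is a non-empty subset of the four-element set $\{\vr,\rv\}\times\{\vs,\sv\}$ (covering the `$\le$' case on both sides, which is enough since reversing both orientations flips the inequality).

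The crucial monotonicity is that $p\le q$ implies $N_q\subseteq N_p$: this is immediate from~\eqref{*} applied to the map $f_{qp}$, since $\vr'\restricts q\le\vs'\restricts q$ forces $\vr'\restricts p\le\vs'\restricts p$ after one more restriction. Combined with directedness of~$P$, the family $(N_p)_{p\in P}$ is downward-directed: for any $p_1,\dots,p_n\in P$ pick $q\ge p_i$ for all~$i$, whence $\emptyset\ne N_q\subseteq\bigcap_i N_{p_i}$. Since the ambient set is finite, the full intersection $\bigcap_{p\in P}N_p$ is non-empty.

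Pick any $(\vr',\vs')$ in this intersection. By definition we have $\vr'\restricts p\le\vs'\restricts p$ for every $p\in P$, which by the componentwise definition of~$\le$ on $\vS=\invlim\S$ means exactly $\vr'\le\vs'$ in~$\vS$. Thus $r$ and $s$ have comparable orientations in~$\vS$, proving that $\vS$ is nested. The only delicate step is the monotonicity $N_q\subseteq N_p$; everything else is compactness in a finite set plus unpacking the definition of the order on an inverse limit, so no real obstacle is expected.
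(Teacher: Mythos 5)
Your proof is correct and uses essentially the same mechanism as the paper's: both rely on the monotonicity of comparability under the bonding maps via~\eqref{*}, directedness of~$P$, and the finiteness of the four possible orientation patterns. The paper phrases it contrapositively (a crossing in~$\vS$ propagates to a crossing in some~$\vSq$), whereas you argue directly via a nested-intersection compactness argument on the sets~$N_p$, but these are the same idea in two guises.
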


\begin{proof}
If $r,s\in S$ are not nested, then $\vr\not\le\vs$ for all four choices of orientations $\vr$ of~$r$ and $\vs$ of~$s$. Each of these four statement is witnessed by some $p\in P$, in that $\vr\restricts p\not\le \vs\restricts p$. Since $P$ is a directed poset, there exists some $q\in P$ that is larger than these four~$p$. By~\eqref{*}, then, $\vr\restricts q\not\le\vs\restricts q$ for all choices of orientations $\vr$ of~$r$ and $\vs$ of~$s$. But these $\vr\restricts q$ and $\vs\restricts q$ are the four orientations of $r\restricts q$ and~$s\restricts q$, since the involution in~$\vS$ commutes with that in~$\vSq$ via the projection $\vS\to\vSq$. Hence $\vSq$ is not nested, contrary to assumption.
\end{proof}

The fact that projections of small separations are small also has a converse at the limit:

\begin{LEM}\label{small}
If $\vr = (\,\vrp\mid p\in P\,)\in\vS$ is such that each $\vrp$ is small, then $\vr$ is small.
\end{LEM}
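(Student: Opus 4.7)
The plan is to unpack the definitions and observe that the statement is essentially immediate. Recall that a separation $\vs$ is called small precisely when $\vs\le\sv$. So what we need to show is that, under the hypothesis $\vrp\le\rvp$ for every $p\in P$, we have $\vr\le\rv$ in $\vSinf$.

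First I would recall the two coordinate-wise definitions set up at the beginning of Section~\ref{sec:limsep}: the order on $\vSinf$ is defined by $(\,\vrp\mid p\in P\,)\le(\,\vsp\mid p\in P\,)$ iff $\vrp\le\vsp$ for every $p\in P$, and the involution is given coordinate-wise by $(\,\vsp\mid p\in P\,)^*=(\,\svp\mid p\in P\,)$. In particular, the involute of $\vr=(\,\vrp\mid p\in P\,)$ is $\rv=(\,\rvp\mid p\in P\,)$.

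Combining these two observations, the inequality $\vr\le\rv$ in $\vSinf$ is equivalent to the family of coordinate inequalities $\vrp\le\rvp$ for all $p\in P$, which is exactly the assertion that every $\vrp$ is small in $\vSp$. Hence the conclusion $\vr\le\rv$ follows directly from the hypothesis, with no further work required.

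There is no real obstacle here: the statement is a one-line consequence of the coordinate-wise definitions of $\le$ and ${}^*$ on the inverse limit, and does not require nestedness, surjectivity of the bonding maps, or Lemma~\ref{closed}. Its point is conceptual rather than technical—namely, that smallness, being a purely order-theoretic property involving a separation and its inverse, lifts without friction from the finite projections to the limit, in contrast to properties such as triviality (which needed a witness and the preceding lemma) or nestedness (Lemma~\ref{lemma3}).
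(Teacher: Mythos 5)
Your proof is correct and takes essentially the same route as the paper's: observe that the involution acts coordinate-wise so that $\rv=(\,\rvp\mid p\in P\,)$, and then the hypothesis $\vrp\le\rvp$ for all $p$ gives $\vr\le\rv$ directly from the coordinate-wise definition of $\le$ on the inverse limit. No differences of substance.
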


\begin{proof}
As projections commute with the involutions~${}^*$, we have $\rv = {(\,\rvp\mid p\in P\,)}$. Hence if $\vrp\le\rvp$ for all $p\in P$, then $\vr\le\rv$ by definition of $\le$ on~$\vS$.
\end{proof}

Lemma~\ref{small} says that being small is a property that cannot disappear suddenly at the limit. But it is not clear that regular profinite separation systems, those without small elements, are inverse limits of finite regular separation systems. Such finite systems can, however, be found:

\begin{PROP}\label{prop:regular}
	{\it
		\tn{ }
   \vskip-\medskipamount\vskip0pt
		\begin{enumerate}\itemsep=0pt
			\item[\tn{(i)}] Every inverse limit of finite regular separation systems is regular.
			\item[\tn{(ii)}] Every profinite regular separation system is an inverse limit of finite regular separation systems.
		\end{enumerate}
	}
\end{PROP}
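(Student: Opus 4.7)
Part~(i) is immediate from~\eqref{*}: if $\vr=(\,\vrp\mid p\in P\,)\in\vS=\varprojlim\vSp$ satisfied $\vr\le\rv$, then~\eqref{*} would force $\vrp\le\rvp$ in every~$\vSp$, contradicting the regularity of each~$\vSp$.

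For~(ii) the idea is to start from any presentation $\vS\simeq\varprojlim\vSp$, pass to the surjective system $(\,\vS\restricts p\mid p\in P\,)$ so that $\vSp=\vS\restricts p$ is the image of~$\vS$, and then find a cofinal subset $Q\sub P$ of indices at which~$\vSp$ is already regular. For each $p\in P$ I will set
\[ S_p := \{\,\vr\in\vS\mid \vr\restricts p\text{ is small in }\vSp\,\}, \]
which is a closed subset of~$\vS$, being the preimage of a subset of the finite discrete space~$\vSp$ under the continuous projection $\vS\to\vSp$.

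The argument then hinges on two properties of the family $(S_p)$. First, $S_q\sub S_p$ whenever $q\ge p$: this is because the order-respecting bonding map $f_{qp}$ carries any small element of~$\vSq$ to a small element of~$\vSp$, an instance of~\eqref{*}. Second, $\bigcap_{p\in P}S_p=\emptyset$: any $\vr$ in this intersection would satisfy $\vrp\le\rvp$ for every~$p$ and hence $\vr\le\rv$ by the componentwise definition of~$\le$ on~$\vS$, contradicting the regularity of~$\vS$. Compactness of~$\vS$ (Lemma~\ref{CompactnessThm}) then tells me that some finite subfamily of the $S_p$ already has empty intersection, and the first property together with the directedness of~$P$ promotes this to a single index $p^*$ with $S_{p^*}=\emptyset$. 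Running the same argument inside each cofinal subset $\{\,q\in P\mid q\ge p_0\,\}$ of~$P$ shows that $Q:=\{\,p\in P\mid S_p=\emptyset\,\}$ is cofinal in~$P$.

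For each $q\in Q$ the finite system $\vSq=\vS\restricts q$ then contains no small element, so it is regular; and cofinality of~$Q$ in~$P$ ensures that $\vS\simeq\varprojlim_{q\in Q}\vSq$, giving the required representation. The main obstacle will be the compactness step; once the closedness of the~$S_p$, their nestedness under inclusion, and the emptiness of their total intersection are observed, the rest is formal bookkeeping.
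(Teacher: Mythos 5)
Your proof is correct and takes essentially the same route as the paper: part~(i) is the same appeal to~\eqref{*}, and for part~(ii) your compactness argument on the decreasing family of closed sets $S_p$ (preimages in~$\vS$ of the sets of small separations of the~$\vSp$) is a direct reformulation of the paper's application of Lemma~\ref{CompactnessThm} to the inverse system of those sets of small separations, combined with Lemma~\ref{small}. The only difference is cosmetic: you phrase the compactness step via the finite intersection property in~$\vS$, whereas the paper exhibits an element of the nonempty inverse limit and derives a contradiction.
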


\begin{proof}
	Assertion (i) follows immediately from~\eqref{*}.
	
	For (ii) assume that $\vS = \invlim(\,\vSp\mid p\in P\,)$ is regular.%
   \COMMENT{}
   If some $ p_0\in P $ exists for which $ \vSpnought $ is regular then, by~\eqref{*}, every $ \vSp $ with $ p\ge p_0 $ is regular, and $ \vS=\invlim (\,\vSp\mid p\ge p_0\,) $ as desired.

Suppose now that no such $ p_0 $ exists, i.e., that no $ \vSp $ is regular. For each $ p\in P $ let $ X_p\ne\es $ be the set of small separations in $ \vSp $. Then $(\,X_p\mid p\in P\,) $ is a restriction of $(\,\vSp\mid p\in P\,) $, since $f_{qp}(X_q)\sub X_p$ for all $p<q$ by~\eqref{*}.%
   \COMMENT{}
   By Lemma~\ref{CompactnessThm} it has a non-empty inverse limit~$X$. Pick $ \vx\in X $. Then also $ \vx \in \vS$, and by construction every projection of $ \vx $ is small. But then Lemma~\ref{small} implies that $ \vx $ is small, contrary to our assumption that $ \vS $ is regular.
\end{proof}

Interestingly, we cannot replace the word `small' in Lemma~\ref{small} with `trivial'. It is true that if all projections of $\vr\in\vS$ are trivial then there exists $s\in S$ such that both $\vr\le\vs$ and $\vr\le\sv$. But this $s$ cannot, in general, be chosen distinct from~$r$: it may happen that $\vs=\vr$ is the only choice. In that case, we have said no more than that $\vr$ is small---which we know already from Lemma~\ref{small}.

Let us call $\vr = (\,\vrp\mid p\in P\,)\in\vS$ {\em finitely trivial\/} in~$\vS$%
   \COMMENT{}
   if for every $p\in P$ there exists $q\ge p$ in~$P$ such that $\vrq$~is trivial in~$\vS\restricts q$.%
   \COMMENT{}
  By Lemma~\ref{small}, any such~$\vr$ will be small, since if $\vrq$ is trivial and hence small, then $\vrp$ too will be small, by~\eqref{*}, if $p<q$. However, even if $\vS$ is a tree set and $ \vr $ is maximal in a closed consistent orientation of $ \vS $ it can happen that $\vr\in\vS$ is finitely trivial (but not trivial),%
   \COMMENT{}
   even with $\vrp$ trivial in~$\vS\restricts p$ for all $p\in P$. Here is an example:

\begin{EX}\label{ex:trivialproj}
Let $ P=\{1,2,\dots\}$. For each $ p\in P $ let $ \sigma_p $ be a proper star with $ p $~elements, and~$ \vSp $ the separation system consisting of $ \sigma_p $ and the respective inverses as well as a separation $ s_p=\{\vsp,\svp\} $, where $ \vsp $ is trivial in $ \vSp $ with exactly one element~$ \vtp $ of~$ \sigma_p $ witnessing this. Let $ f_{p+1,p}\colon\vSp{}_{+1}\to\vSp $ be the homomorphism which maps~$\vsp{}_{+1} $ and $\vtp{}_{+1} $ to $ \vsp $, maps $ \sigma_{p+1}\sm\{\vtp{}_{+1}\}$ bijectively to~$ \sigma_p $, and is defined on the inverses of these separations so as to commute with the inversion. These maps induce bonding maps $ f_{qp}\colon\vSq\to\vSp $ by concatenation.

Let $\vS:=\invlim (\vSp\mid p\in P) $. Then $ \vs:=(\,\vsp\mid p\in P\,)\in\vS $ is the only sep-\penalty-200\-ara\-tion in $ \vS $ whose projections to $ p $ do not meet $ \sigma_p $, for any $ p\in P $. It is easy to see that every $r\in S\sm\{s\}$ has an orientation $\vr$ such that $\vr\proj p = \vtp$ for some~$p$.%
   \COMMENT{}
   Then $\vr\proj p' = \vspdash$ for any $p'<p$, and $\vr\proj q\in \sigma_q\sm\{\vtq\}$ for all~$q>p$. So this $p$ is in fact unique given~$r$, and conversely $\vr$ is the only element of~$\vS$ such that $\vr\proj p = \vtp$; let us denote this~$\vr$ by~$\vrp$.

We have shown that $ \vS $ consists of the star $ \sigma:=\{\vs\}\cup\{\,\vrp\mid p\in P\,\} $ plus inverses. By construction $ \vs $ is finitely trivial; in fact, all of its projections are trivial. However $ \vs $ is not trivial in $ \vS $, because no $r_p$ could witness this: as $\vrp\proj q\in \sigma_q\sm\{\vtq\}$ for $q>p$, and $\vtq$ is the only element of $\sigma_q$ witnessing the triviality of~$\vsq$, the fact that $\vsq\le\vtq\le \rvp\proj q$ (since $\sigma_q$ is a star containing both $\vtq$ and~$\vrp\proj q$) implies that $\vsq\not\le \vrp\proj q$ and hence $\vs\not\le\vrp$ by~\eqref{*}.

Thus, $ \vs $ is a maximal element of the star $ \sigma $, which is a closed consistent orientation of $ \vS $.\qed
\end{EX}

Note how the separation $\vs$ in Example~\ref{ex:trivialproj} escapes being trivial even though all its projections are trivial, because the witnesses~$\vtp$ of the triviality of those projections do not lift to a common element of~$\vS$, which would then witness the triviality of~$\vs$. Rather, the $\vtp$ are projections of pairwise different separations $\vrp\in\vS$, none of which can alone witness the triviality of~$\vs$.

The fact that $\sigma$ can be a splitting star containing a finitely trivial separation, even though splitting stars cannot contain trivial separations (Lemma~\ref{Remark8}), thus seems to rest on the fact that $\sigma$ is infinite. This is indeed the case:%
   \COMMENT{}

\begin{LEM}\label{triviallimitsingleton}
	Assume that $ \vS $ is nested, and let $ \sigma=\{\vr,\vsone,\dots,\vsn\} $ be any finite splitting star of $ \vS $ (where $n\ge 0$).
   \vskip-\medskipamount\vskip0pt
	\begin{enumerate}\itemsep=0pt
	\item[\tn{(i)}] No element of $ \sigma $ is finitely trivial in~$ \vS $.
	\item[\tn{(ii)}] If $ \sigma = \{\vr\}$, then $ \vr\proj p $ is nontrivial in~$ \vS\proj p $ for every $p\in P$.
	\end{enumerate}
\end{LEM}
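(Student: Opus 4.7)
The plan is to handle~(ii) directly by projecting a single inequality, and~(i) by a pigeonhole argument on~$\sigma$ that uses its finiteness to collapse many witnesses of finite triviality of an element of~$\sigma$ to a single witness of its triviality in~$\vS$.

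For~(ii), suppose that $\vr\restricts p$ is trivial in~$\vS\restricts p$, witnessed by some $\vt\restricts p$ with $\vt\in\vS$. Exactly one of $\vt,\tv$ lies in the consistent orientation~$O\sub\dcl(\vr)$ split by~$\{\vr\}$, and so is $\le\vr$. Projecting this inequality to~$p$ via~\eqref{*} will contradict whichever of the witness conditions $\vr\restricts p<\vt\restricts p$ or $\vr\restricts p<\tv\restricts p$ applies.

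For~(i), suppose for contradiction that some element of~$\sigma$, say~$\vr$, is finitely trivial, and let $O$ denote the consistent orientation of~$\vS$ split by~$\sigma$. For each~$q$ in the cofinal set $Q\sub P$ where $\vrq$~is trivial in~$\vS\restricts q$, I~lift a witness to some~$\vt_q\in\vS$; since the conditions $\vrq<\vt_q\restricts q$ and $\vrq<\tv_q\restricts q$ are symmetric in the orientation of~$t_q$, I~may arrange that $\vt_q\in O$ by re-orienting if necessary. Because $O\sub\dcl(\sigma)$, each~$\vt_q\le\vu_q$ for some $\vu_q\in\sigma$; and because $\sigma$~is finite and $P$~directed, a pigeonhole argument yields a cofinal $Q'\sub Q$ on which $\vu_q$ equals some fixed $\vu\in\sigma$.

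If $\vu=\vr$, then projecting $\vt_q\le\vr$ to~$q$ via~\eqref{*} immediately contradicts the strict inequality $\vrq<\vt_q\restricts q$. Otherwise $\vu=\vsi$ for some~$i$, and projecting $\vt_q\le\vsi$ to~$q$ yields, together with $\vrq<\vt_q\restricts q$, the strict inequality $\vrq<\vsi\restricts q$ for every $q\in Q'$. By cofinality of~$Q'$ and the order-respecting property of the bonding maps, this implies $\vr\restricts p\le\vsi\restricts p$ for every $p\in P$, hence $\vr\le\vsi$ in~$\vS$. Since Lemma~\ref{Remark8} tells us that $\sigma$~is a proper star, $\vr\ne\vsi$ and $\vr\ne\svi$, so $\vr<\vsi$; together with $\vr<\svi$ (from $\vr\le\svi$ in the star~$\sigma$), this shows that~$s_i$ witnesses the triviality of~$\vr$ in~$\vS$, contradicting Lemma~\ref{Remark8}. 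The argument for any other element of~$\sigma$ in place of~$\vr$ is entirely analogous, with an additional direct-contradiction sub-case whenever $\vu$~equals that element itself. The main obstacle here is the pigeonhole step: it uses the finiteness of~$\sigma$ in an essential way, which is precisely the feature that fails in Example~\ref{ex:trivialproj}.
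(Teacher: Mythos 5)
Your proof is correct and follows essentially the same route as the paper's: for (ii) you lift a witness, orient it into the consistent orientation split by~$\{\vr\}$, and project the resulting inequality to get a contradiction; for (i) you lift witnesses at cofinally many~$q$, orient them into~$O$, pigeonhole on which element of~$\sigma$ they lie below, pass to the limit via cofinality to get $\vr\le\vsi$, and contradict the proper-star property from Lemma~\ref{Remark8}. The only cosmetic difference is that the paper rules out $\vu_q=\vr$ before the pigeonhole step, while you do so afterwards.
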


\begin{proof}
Let $O$ be a consistent orientation of $\vS$ that splits at~$\sigma$.

For (i) suppose without loss of generality that $ \vr $ is finitely trivial in $ \vS $. Let $ P'\sub P $ be the set of all $ p\in P $ for which $ \vr\proj p $ is trivial in $ \vS\proj p $, and for each $ p\in P' $ pick some $ s(p)\in S $ such that $ s(p)\proj p $ witnesses the triviality of $ \vr\proj p $ in~$ \vS\proj p $. Let $ \vs(p) $ be the orientation of $ s(p) $ that lies in $ O $.

As $ \vr\proj p<\vs(p)\proj p $ by the choice of~$ s(p) $ we cannot have $ \vs(p)\le\vr $, by~\eqref{*}. Hence $ \vs(p)\le\vsi $ for some~$i$. As~$ \sigma $ is finite, there is some $ i\in\{1,\dots,n\} $ for which the set $ P'' $ of all $ p\in P' $ with $ \vs(p)\le\vsi $ is cofinal in $ P' $ (and hence in $ P $). But now we have $ \vr\proj p<\vs(p)\proj p\le\vsi\proj p $ for every $ p\in P'' $, and hence $\vr\proj p\le\vsi\proj p$ for all $p\in P$.%
   \COMMENT{}
   But this means that $ \vr\le\vsi $, which contradicts the fact that $ \sigma $ is a proper star (Lemma~\ref{Remark8}).

For (ii) let $ p\in P $ be given and suppose that $ \vr\proj p $ is trivial in $ \vS\proj p $. Let $ \vs\in O $ be such that $ s\proj p $ witnesses the triviality of $ \vr\proj p $ in~$ \vS\proj p $. Then $\vr\proj p < \vs\proj p$. But as $ \{\vr\} $ splits~$ \vS $, witnessed by $O\owns \vs$, we also have $ \vs\le\vr $ and hence $ \vs\proj p\le\vr\proj p $, by~\eqref{*}. This is a contradiction.
\end{proof}

Our situation when lifting theorems about finite separation systems to profinite ones will be that we know the~$\vSp$ and wish to study~$\vS$. In particular, we shall be interested in its closed consistent orientations, given those of the~$\vSp$.

Let us begin by addressing a local question in this context. Suppose an orientation $O$ of $\vS$ that we would like to be consistent is in fact inconsistent, and that this is witnessed by an inconsistent pair $\{\sv,\vsdash\}\sub\vS$. Will this show in the projections of $\svp$ and~$\vsdashp$? More specifically, if we start with a consistent orientation $O_p$ of~$S_p$ (without degenerate elements) and take as $O$ the unique orientation of~$S$ that projects to a subset of~$O_p$,%
   \COMMENT{}
   will $O$ be consistent?

Given an inconsistent pair $\{\sv,\vsdash\}\sub\vS$, consider $\rvp := \sv\restricts p$ and $\vrdashp := \vsdash\restricts p$ for some $p\in P$. If $r_p\ne r'_p$ then $\{\rvp,\vrdashp\}$ is also inconsistent, by~\eqref{*}. But it can also happen that $r_p=r'_p$. In that case we would expect that $\rvp$ and $\vrdashp$ are inverse to each other, i.e.\ that $\vrp = \vrdashp$. This is indeed what usually happens, and it will not cause us any problems. (In our example, only one of $\rvp$ and $\vrdashp$ lies in~$O_p$, so only one of $\sv$ and~$\vsdash$ will be adopted for~$O$.)
   However it can also happen that\,$\sv$ and~$\vsdash$, despite being inconsistent, induce the same separation $\rvp = \vrdashp$ in~$\vSp\,$:

\begin{EX}\label{inconsistentpair}
Let $V\!$ be a disjoint union of four sets~$A,B,C,X$. Let $\vS$ be the system of set separations of~$V\!$, and consider its projections to $U:= C\cup X$. Let $\vs =  (X\cup A, C\cup X\cup B)$ and $\vsdash = (C\cup X\cup A, X\cup B)$. Then $\vs < \vsdash$, so $\{\sv,\vsdash\}$ is inconsistent, but their projections $\rvp = \vrdashp = (C\cup X,X)$ to~$U$ coincide.
\end{EX}

In fact, it can happen that {\em every\/} projection $\rv\restricts p$ of some $\rv\in\vS$ coincides with the projections to~$\vSp$ of some two inconsistent separations in~$\vS$ (depending on~$p$). Such curious separations~$\vr$ are rare, of course, and in practice they do not cause any big problems. For example, one can show that they are small, because all their projections~$\vr\proj p$ are~-- like $\vrp$ in the above example.%
   \COMMENT{}

Let us return to the question of what we can deduce about~$\vS$ from information we have about the~$\vSp$. We saw in Lemma~\ref{lemma3} that if all the $\vSp$ are nested then so is~$\vS$. Then the closed consistent orientations of~$\vS$, which we shall be interested in most, are essentially determined by its splitting stars (see Section~\ref{sec:orientations}).%
   \COMMENT{}
   Ideally, then, the stars~$\sigma$ splitting~$\vS$ should be the limits (as sets) of the stars splitting the~$\vSp$.%
   \COMMENT{}

Before we prove two lemmas saying essentially this, let us see why it cannot quite be true as stated. For a start, $\sigma$~might consist of two separations $\vsone,\vstwo\in\vS$ whose projections $\vsone\restricts p$ and $\vstwo\restricts p$ to some~$p\in P$ are inverse to each other (i.e., are the two orientations of the same unoriented separation in~$S_p$), and hence cannot lie in the same consistent orientation of~$S_p$.%
   \COMMENT{}
   More fundamentally recall that, by Lemma~\ref{Remark8}, splitting stars contain no trivial separations. But separations $\vs\in\vS$ that are not trivial in~$\vS$ may well have projections $\vs\restricts p$ that are trivial in~$\vSp$, as we saw in Example~\ref{ex:trivialproj}. Hence we cannot expect $\sigma\restricts p$ to split~$\vSp$ just because $\sigma$ splits~$\vS$.%
   \COMMENT{}

Our next lemma shows that these two examples are essentially the only ones.%
   \COMMENT{}
   The first can be overcome by choosing $p$ large enough, while in the second we only have to delete the trivial separations in $\sigma\restricts p$ to obtain a splitting star of~$\vSp$.

Recall that $\vS{}^\circ$ denotes the {\em essential core\/} of~$\vS$, the separation system obtained from~$\vS$ by deleting its degenerate, trivial, and co-trivial elements.%
   \COMMENT{}
   In the lemma below,%
   \COMMENT{}
   let $\amgis_p := \{\sv\mid\vs\in\sigma_p\}$.

\begin{LEM}\label{lemma5}
Assume that $ (\,\vSp\mid p\in P\,) $ is surjective, and that the~$\vSp$ and hence~$\vS$%
   \COMMENT{}
   are nested. Assume further that $ \sigma\sub\vS $ splits $ \vS $, witnessed by the consistent orientation $ O $ of $ \vS $. Then either $ \sigma=\{\rv\} $ with $ \vr $ finitely trivial in~$ \vS $ or $r$ degenerate, or there exists $ p_0\in P $ such that, for all $ p\ge p_0 $, the set
  $$ \sigma_p:=(\sigma\proj p)\cap\vSp{\!\!}^\circ $$
  splits~$ \vSp $, witnessed by the consistent orientation~$ O_p:=(O\proj p)\sm\amgis_p $ of~$ \vSp $.%
   \COMMENT{}
\end{LEM}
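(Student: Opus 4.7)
The plan is to dispose of the degenerate singleton case immediately, and then in the main case to find $p_0$ large enough that all the finitely many obstructions to $\sigma_p$ splitting $\vSp$ via $O_p$ vanish simultaneously in every $\vSp$ with $p\ge p_0$. If $\sigma$ contains a degenerate separation $s$, Lemma~\ref{Remark8} forces $\sigma=\{\vs\}$, which lands in the first conclusion; and if $\sigma=\{\vr\}$ with $\vr$ finitely trivial we are also in the first conclusion (although Lemma~\ref{triviallimitsingleton}(ii) in fact shows this case cannot arise from a singleton splitting star). Otherwise, $\sigma$ is a proper star by Lemma~\ref{Remark8}. As a preliminary step, I establish that no element of $\sigma$ is finitely trivial in~$\vS$: for finite $\sigma$ this is exactly Lemma~\ref{triviallimitsingleton}(i), and for infinite $\sigma$ I would extend the same pigeonhole idea by a compactness argument within the inverse system, producing a single element of $\sigma$ that cofinally dominates the witnesses $\vs(p)\in O$ of triviality at each level~$p$.

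Now for each fixed $p$ only finitely many potential obstructions to the desired conclusion can arise in~$\vSp$, namely: distinct elements of $\sigma$ whose essential projections coincide or are inverse to each other in~$\vSp$; separations of $\vSp$ with both orientations in $O\restricts p$ arising from sources outside of~$\sigma$; and inconsistent pairs in $O\restricts p$ whose inconsistency is not repaired by removing $\amgis_p$. Using directedness of~$P$, the preliminary step above, and~\eqref{*}, I argue that each such obstruction disappears once $p$ is large enough; since there are only finitely many obstructions at each level and the $\vSp$ are finite, a cofinality argument then produces a single $p_0$ that handles them all simultaneously.

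For $p\ge p_0$ the verifications proceed in order. First, $\sigma_p$ is a proper star in~$\vSp$: projections of stars are stars by~\eqref{*}, intersecting with $\vSp^\circ$ removes any degenerate images, and by the choice of~$p_0$ no two elements of $\sigma_p$ are inverse to each other. Second, $O_p$ is an orientation of~$\vSp$: surjectivity of the inverse system guarantees that every $s\in S_p$ has a preimage in~$\vS$, one of whose orientations lies in~$O$ and hence projects into~$O\restricts p$; removing $\amgis_p$ cleans up the surviving duplications. Third, $O_p$ is consistent: any inconsistency in $O\restricts p$ must come from a collision of projections of elements of~$O$, and by the choice of~$p_0$ the only such collisions that survive are precisely those within $\sigma_p$, which are repaired by the removal of~$\amgis_p$. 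Fourth, $\sigma_p$ is the set of maximal elements of~$O_p$: given $\vr\restricts p\in O_p$, lift it to $\vr\in O$, pick $\vx\in\sigma$ with $\vr\le\vx$ using $O\sub\dcl(\sigma)$, and project to obtain $\vr\restricts p\le \vx\restricts p\in\sigma_p$. The main obstacle I expect lies in the consistency step: projection collisions and inconsistencies can be subtle, as Example~\ref{inconsistentpair} illustrates, and a careful combinatorial analysis of how pairs in $O$ and $\sigma$ project into $\vSp$ is needed to confirm that the removal of $\amgis_p$ matches these phenomena exactly for $p\ge p_0$.
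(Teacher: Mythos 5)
There is a genuine and decisive gap: your preliminary step is false. You claim that no element of a splitting star $\sigma$ is finitely trivial, proposing to extend Lemma~\ref{triviallimitsingleton}(i) from finite $\sigma$ to infinite $\sigma$ by a compactness argument. But Example~\ref{ex:trivialproj} in the paper constructs precisely an infinite splitting star $\sigma$ of a profinite tree set with a finitely trivial element $\vs\in\sigma$, and the discussion following it explicitly says that splitting stars \emph{can} contain finitely trivial separations when they are infinite. Lemma~\ref{triviallimitsingleton} is stated only for \emph{finite} $\sigma$, and its pigeonhole step (finitely many $\vsi$, so some $P''$ is cofinal) has no compactness analogue for infinite $\sigma$: the witnesses $\vs(p)$ may escape to infinity. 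So the architecture of your argument rests on a claim that is both false and unnecessary --- the definition $\sigma_p := (\sigma\proj p)\cap\vSp^\circ$ already discards projections that have become trivial, which is exactly how the lemma tolerates finitely trivial elements of $\sigma$.

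Two further steps are also too loose. First, the claim that obstructions ``disappear once $p$ is large enough'' and that ``a cofinality argument then produces a single $p_0$'' does not work: new obstruction pairs may appear at arbitrarily high $p$, and there is no a priori bound forcing them to stop. The paper resolves this by a genuine compactness argument: the sets $I_p$ of bad pairs (witnessing inconsistency or failure of antisymmetry of $O\proj p$) form a restriction of the inverse system, so if they are all nonempty, Lemma~\ref{CompactnessThm} yields a limiting pair $\vr,\vs\in\vS$ with $\rv\le\vs$; then, crucially, one shows $O$ is closed in $\vS$ (via Lemma~\ref{lem:boundedclosed}, proved independently in Section~\ref{sec:orientations}), so $\vr,\vs\in O$, contradicting consistency. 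You do not produce this limit and do not use closedness of $O$, so the contradiction never materialises. Second, your last verification --- lift $\vr\proj p$ to $\vr\in O$, pick $\vx\in\sigma$ with $\vr\le\vx$ and project --- overlooks that $\vx\proj p$ need not lie in $\sigma_p$, since it may be trivial in $\vSp$ and so removed by the intersection with $\vSp^\circ$. The paper's argument instead exploits maximality to force $\vr\proj p = \vx\proj p$ and then invokes Lemma~\ref{Remark8} to get membership in $\vSp^\circ$. Finally, a small but real misreading: the first alternative in the lemma is $\sigma = \{\rv\}$ with $\vr$ (the \emph{inverse} of the element of $\sigma$) finitely trivial; Lemma~\ref{triviallimitsingleton}(ii) rules out $\vr\in\sigma$ being finitely trivial but says nothing about $\rv\in\sigma$ being finitely \emph{co}-trivial, which is the case the first alternative genuinely covers and which does occur.
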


\begin{proof}
Consider first the case that $ \abs{\sigma}=1 $, i.e., that $ \sigma $ has the form $ \{\vr\} $ for some $ \vr=(\,\vrp\mid p\in P\,)\in\vS$. By Lemma~\ref{triviallimitsingleton}, $ \vr $~cannot be finitely trivial, and if $ \rv $ is finitely trivial we are done by assumption. Thus we may assume that neither $ \vr $ nor $ \rv $ is finitely trivial in~$ \vS $, and that $r$ is not degenerate.%
   \COMMENT{}
   Then there exists $ p_0\in P $ such that $ \vrp\in\vSp{\!\!}^\circ $ for all $ p\ge p_0 $.%
  \COMMENT{} 
  We claim that this $ p_0 $ is as desired. 

To see this, observe that $ \vr $ is the greatest element of $ O $, which by~\eqref{*} implies that $ \vrp $ is the greatest element of $ O_p $. Let us show that $ O_p $ is an orientation of~$ \vSp $. Suppose there is a nondegenerate $ s\in\vSp $ for which $ \vs,\sv\in O_p $. Then $ \vs<\vrp $ and $ \sv<\vrp $, so $ \vrp $ is co-trivial in $ \vSp $, contrary to the choice of $ p_0 $.%
   \COMMENT{}
   A~similar argument shows that $O_p$ is consistent.%
   \COMMENT{}

Let us now consider the case that $ \abs{\sigma}\ge 2 $. We shall first show that there exists some $ p_0\in P $ such that all $ O\proj p $ with $ p\ge p_0 $ are consistent orientations of~$ \vSp $. We then show that this $ p_0 $ is as desired.

By the surjectivity of the bonding maps we have $ \vSp=\vS\proj p $ for each $ p\in P $,%
   \COMMENT{}
   so $ O\proj p $ contains at least one of $ \vsp $ and $ \svp $ for each $ s_p\in\vSp $. It thus suffices to find $ p_0 $ so that $ O\proj p $ is antisymmetric and consistent for all $ p\ge p_0 $.

For every $ p\in P $ let 
 $$ I_p:=\{\,(\vrp,\vsp)\mid\vrp,\vsp\in O\proj p\tn{ and }\rvp\le\vsp\,\}.$$
    Note that $ O\proj p $ is a consistent orientation of $ \vSp $ if (and only if) $ I_p $ is empty: any pair $ (\vrp,\vsp) $ of separations in $ O\proj p $ that witnesses that $ O\proj p $ is inconsistent lies in~$ I_p $, and so does any pair witnessing that $ O\proj p $ is not antisymmetric. Furthermore, for any $ q>p $, if $ (\vrq,\vsq) $ lies in $ I_q $ then $ (\vrq\proj p\,,\,\vsq\proj p) $ lies in $ I_p $. Thus if~$ I_{p_0} $ is empty for some $ p_0\in P $ then $ I_p $ is empty for all $ p\ge p_0 $.%
    \COMMENT{}

Suppose $ I_p $ is non-empty for all $ p\in P $. The family $ (\,I_p\mid p\in P\,) $ forms an inverse system with bonding maps borrowed from $ (\,\vSp\mid p\in P\,) $, which by Lemma~\ref{CompactnessThm} has a non-empty inverse limit. Let $ (\vr,\vs)=(\,(\vrp,\vsp)\mid p\in P\,) $ be an element of this inverse limit. Then $ \vr=(\,\vrp\mid p\in P\,) $ and $ \vs=(\,\vsp\mid p\in P\,) $ are elements of~$ \vS $ satisfying $ \rv\le\vs $.%
   \COMMENT{}
   By Lemma~\ref{lem:boundedclosed},%
   \footnote{\dots in whose proof we shall not use Lemma~\ref{lemma5}}
  $ O $~is closed in~$ \vS $, giving $ \vr,\vs\in O $ by Lemma~\ref{closed}(i) since $\vrp,\vsp\in O\proj p$ for all~$p$.

But this contradicts the fact that $ O $ is a consistent orientation of $ \vS $. Indeed, if $ r\ne s $ then $ \vr $ and $ \vs $ witness its inconsistency. If $ r=s $ with $ \vr=\vs $, then $ \rv\le\vs=\vr $, so $ \vr $ is co-small. Choose $\vsdash\in\sigma$ so that $\vs\le\vsdash$. Then every $\vt\in\sigma\sm\{\vsdash\}$ (which exists, since $|\sigma|\ge 2$) is trivial, as $\vt < \svdash\le \sv\le\vs$, which contradicts Lemma~\ref{Remark8}. And finally, if $ r=s $ with $ \vr=\sv $, then $ r $ must be degenerate as $ O $ is antisymmetric, but in that case $ \sigma=\{\vr\} $ by Lemma~\ref{Remark8}, contradicting $ \abs{\sigma}\ge 2 $. This completes our proof that there exists $ p_0\in P $ such that $ O\proj p $ is a consistent orientation of $ \vSp $ for all $ p\ge p_0 $.

Let us now show that this $ p_0 $ is as claimed. For this we only need to check, for all~$ p\ge p_0 $, that $ \sigma_p:=(\sigma\proj p)\cap\vSp{\!\!}^\circ$ is the set of maximal elements of~$ O\proj p $: since $ O\proj p $ is a consistent orientation of~$\vSp$, by the choice of~$p_0$, and finite, it clearly splits at the set $\tau_p$ of its maximal elements.

To see that $ \tau_p $ is contained in $ \sigma_p $ let $ \vrp\in\tau_p $ be given and pick some $ \vr\in O $ with $ \vr\proj p=\vrp $. As $ O$ splits at~$ \sigma $, there is some $ \vs\in\sigma $ such that $ \vr\le\vs $. But then $ \vrp\le\vs\proj p $ by~\eqref{*}, and hence $ \vrp=\vs\proj p $ by the maximality of $ \vrp $ in $ O\proj p $. Thus, $ \vrp\in \sigma\proj p $. But this implies $ \vrp\in\sigma_p $, since $ \tau_p\sub\vSp{\!\!}^\circ $ by Lemma~\ref{Remark8}.

For the converse inclusion let $ \vsp\in\sigma_p $ be given and pick $ \vs\in\sigma $ with~${ \vs\proj p=\vsp} $. If $ \vsp $ is maximal in $ O\proj p $ then $ \vsp\in\tau_p $ and there is nothing to show. Otherwise there exists $ \vtp\in\tau_p $ with $ t_p\ne s_p $ and $ \vsp\le\vtp $. As seen above we can find $ \vt\in\sigma $ with $ \vt\proj p=\vtp $. Then $ \vs\ne\vt $ and hence $ \vs\le\tv $ by the star property, which implies $ \vsp\le\tvp $ by~\eqref{*}. But then $ t_p $ witnesses that $ \vsp $ is trivial in $ \vSp $ and hence does not lie in $ \sigma_p $, contrary to our assumption.
\end{proof}

In applications of Lemma~\ref{lemma5} it will be convenient to know that all the separations deleted from~$\sigma\proj p$ in the definition of $\sigma_p$ are in fact trivial, not co-trivial,%
   \COMMENT{}
   and even trivial in~$\sigma\proj p$ rather than just in the larger~$\vSp$. Our next lemma will imply this (with $\sigma\proj p$ as the star considered there).%
   \COMMENT{}

Given a subset $\sigma$ of a separation system~$\vS$, write $\sigma^-\,$for the set obtained from $\sigma$ by deleting any separations that are trivial in~$\sigma$ (that is, trivial in $\vS$ with a witess in~$\sigma$; see the start of Section~\ref{sec:separations} for a formal definition). Note that if $\sigma$ is finite,%
   \COMMENT{}
   then any $\vr\in\sigma\sm\sigma^-$ is not only trivial in~$\sigma$ but has a witness of this in~$\sigma^-$: just take a maximal witness in~$\sigma$, which cannot also be trivial in~$\sigma$ since its own witnesses would have an orientation that is a greater witness of the triviality of~$\vr$.

\begin{LEM}\label{LemmaB}
Let $\vS$ be a nested separation system split by a star~$\sigma^\circ$, and let $\sigma\sub\vS$ be any star containing~$\sigma^\circ\!$.
   \begin{enumerate}[\rm(i)]
   \item If $\sigma$ is antisymmetric, then $\sigma^\circ\! = \sigma\cap \vS{}^\circ = \sigma^-\!$, and any element of~$\sigma$ that is trivial in~$\vS$ has a (nontrivial) witness for this in~$\sigma^\circ\!$.%
   \COMMENT{}
   \item If $\sigma$ is not antisymmetric, then there exists $\vs\in\vS{}^\circ$ such that $\sigma^\circ = \{\vs\}$ and $\sigma^- = \{\vs,\sv\}$.%
   \COMMENT{}
   \end{enumerate}
\end{LEM}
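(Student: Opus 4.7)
The plan is to pick a consistent orientation $O$ of $\vS$ that splits at $\sigma^\circ$, so that $O\subseteq\dcl(\sigma^\circ)$ with $\sigma^\circ$ exactly its set of maximal elements, and then to analyse each $\vr\in\sigma$ via its relation to $\sigma^\circ$ using $O$ together with the star property of $\sigma$. By Lemma~\ref{Remark8}, $\sigma^\circ$ is a proper star contained in $\vS{}^\circ$. The key preliminary observation to establish first is: if $\vr\in\sigma$ lies in $O$, then $\vr\le\vt$ for some $\vt\in\sigma^\circ\subseteq\sigma$, and in the case $\vr\ne\vt$ the star property applied to the distinct pair $\vr,\vt\in\sigma$ yields $\vr\le\tv$ as well, so that $t$ (with $r\ne t$) witnesses the triviality of $\vr$ in $\sigma$; this witness is automatically nontrivial in~$\vS$ since $\vt\in\vS{}^\circ$.

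For~(i), assume $\sigma$ is antisymmetric. I first rule out that some $\vr\in\sigma$ has $\rv\in O$. Picking $\vt\in\sigma^\circ$ with $\rv\le\vt$: if $\vr=\vt$ then $\vr,\rv\in O$ forces $r$ degenerate, contradicting antisymmetry; otherwise the star property on $\vr,\vt\in\sigma$ gives $\vr\le\tv$, which combined with $\tv\le\vr$ forces $\vr=\tv$, placing both $\vt$ and $\tv$ in $\sigma$ and again contradicting antisymmetry. Hence every $\vr\in\sigma$ lies in $O$, and the key observation yields the dichotomy that $\vr$ is either in $\sigma^\circ$ or trivial in~$\sigma$. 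This immediately gives $\sigma^-\subseteq\sigma^\circ$; the reverse inclusion is automatic since $\sigma^\circ\subseteq\vS{}^\circ$ contains no $\vS$-trivial elements. The equality $\sigma^\circ=\sigma\cap\vS{}^\circ$ drops out from the same dichotomy, and the nontrivial witness in $\sigma^\circ$ promised for each $\vS$-trivial element of $\sigma$ is exactly what the key observation supplies.

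For~(ii), fix $\vs,\sv\in\sigma$ witnessing non-antisymmetry. For any $\vr\in\sigma\setminus\{\vs,\sv\}$, applying the star property of $\sigma$ to the distinct pairs $\vr,\vs$ and $\vr,\sv$ gives strict $\vr<\vs$ and $\vr<\sv$ with $r\ne s$, so $\vr$ is trivial in~$\sigma$. Hence $\sigma^-\subseteq\{\vs,\sv\}$, and by Lemma~\ref{Remark8} also $\sigma^\circ\subseteq\{\vs,\sv\}$. Since $\sigma^\circ$ is a nonempty proper star it must be a singleton, so after possibly swapping the roles of $\vs$ and $\sv$ I may take $\sigma^\circ=\{\vs\}$, giving $\vs\in\vS{}^\circ$ by Lemma~\ref{Remark8}. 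Finally, $\vs\in\sigma^-$ because $\vs$ is not trivial in $\vS$, and $\sv\in\sigma^-$ because triviality of $\sv$ in $\sigma$ would make $\vs$ co-trivial in $\vS$, contradicting $\vs\in\vS{}^\circ$; so $\sigma^-=\{\vs,\sv\}$ as required.

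The main care needed throughout is the strict-versus-non-strict bookkeeping in the star arguments and verifying $r\ne t$ (or $r\ne s$) before invoking triviality; the edge case of $\vS$ having a degenerate element is handled by Lemma~\ref{Remark8}, which pins $\sigma^\circ$ down to that singleton and sits outside the scope of the formulation of~(ii).
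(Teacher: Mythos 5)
Your proof is correct and follows essentially the same route as the paper's: both work with the consistent orientation $O$ splitting at $\sigma^\circ$, both rule out $\rv\in O$ for $\vr\in\sigma$ via antisymmetry in part (i), both then obtain a witness $\vt\in\sigma^\circ$ of the triviality of any $\vr\in\sigma\sm\sigma^\circ$ by combining $\vr\le\vt$ (from $O\sub\dcl(\sigma^\circ)$) with the star inequality $\vr\le\tv$ and tracking strictness, and both handle (ii) by showing every $\vr\in\sigma\sm\{\vs,\sv\}$ is trivial with witness $s$ and then pinning down $\sigma^\circ$ and $\sigma^-$. The only organizational difference is that you first prove $\sigma\sub O$ for all of $\sigma$ and phrase the conclusion as a dichotomy, whereas the paper works directly with $\vr\in\sigma\sm\sigma^\circ$; this is cosmetic.
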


\begin{proof}
(i) By Lemma~\ref{Remark8} we have $\sigma^\circ\sub\vS{}^\circ$. As $\sigma^\circ\sub\sigma$, this yields $\sigma^\circ\sub\sigma\cap \vS{}^\circ$.
And we have $\sigma\cap \vS{}^\circ \sub \sigma^-\!$, because elements of~$\sigma$ that are trivial in~$\sigma$ are also trivial in~$\vS$.%
   \COMMENT{}
   For the converse inclusions, and the second statement of~(i), we show that any $\vr\in \sigma\sm \sigma^\circ$ is trivial in $\sigma$ with a witness in~$\sigma^\circ\sub\sigma$, and hence does not lie in~$\sigma^-$ either.%
   \COMMENT{}

Let $O$ be the consistent orientation of~$\vS$ whose set of maximal elements is~$\sigma^\circ$. If $\rv\in O$, then $\rv\le\vs$ for some $\vs\in\sigma^\circ$, by the choice of~$O$. The inequality is strict, since $\rv\ne\vs$ as $\vr,\vs\in\sigma$ and $\sigma$ is antisymmetric by assumption. Hence $\rv < \vs\le\rv$ by the star property of~$\sigma\owns \vr,\vs$, with a contradiction.

Thus, $\rv\notin O$ and therefore $\vr\in O$. As before, there exists $\vs\in\sigma^\circ$ such that $\vr \le \vs$. This time, the inequality is strict because $\vs\in\sigma^\circ$ while $\vr\in\sigma\sm\sigma^\circ$. But we also have $\vr\le\sv$, by the star property of~$\sigma\owns \vr,\vs$. As $\sigma$ is antisymmetric by assumption, the inequality $\vr\le\sv$ is strict, too. Thus, $\vr$~is trivial in~$\vS$ with a witness $\vs\in\sigma^\circ$, as desired.

(ii) If $\sigma$ is not antisymmetric, then $\sigma\supe \{\vs,\sv\}$ for some~$s\in S$. Since $\sigma$ is a star, any other element of~$\sigma$ is trivial with witness~$s$. In particular, $\sigma^\circ\sub\sigma$ contains no such other elements by Lemma~\ref{Remark8}. As $\sigma^\circ$ is antisymmetric,%
   \COMMENT{}
   we thus have $\sigma^\circ = \{\vs\}$ for one of the orientations $\vs$ of~$s$.

As any element of $\sigma$ other than $\vs$ and~$\sv$ is trivial in~$\sigma$, we have ${\sigma^-\sub\{\vs,\sv\}}$. As $\vs\in\sigma^\circ\sub\vS{}^\circ$ is neither trivial nor co-trivial even in~$\vS$ we also have the converse inclusion, i.e., $\sigma^- = \{\vs,\sv\}$ as claimed.
\end{proof}

Next, let us prove a local converse of Lemma~\ref{lemma5}, which implies that every star splitting an element~$\vSp$ of a surjective inverse system $(\,\vSp\mid p\in P\,)$ of nested separation systems is induced, modulo the deletion of trivial separations,%
   \COMMENT{}
   by some splitting star in every~$\vSq$ with $q>p$.%
   \COMMENT{}
   Our proof will be independent of Lemma~\ref{LemmaB}, since using it would not make it much shorter.

\begin{LEM}\label{LemmaX}
Let $f_{qp}\colon \vSq\to\vSp$ be an epimorphism between two nested separation systems without degenerate elements, denoted as $\vs\mapsto\vs\proj p$.%
   \COMMENT{}
   If $\sigma_p\sub \vSp$ splits~$\vSp$, then $\vSq$ is split by a set $\sigma_q$ such that $(\sigma_q\restricts p)\cap \vSp{\!\!}^\circ = (\sigma_q\proj p)^- = \sigma_p$.
   \COMMENT{}
\end{LEM}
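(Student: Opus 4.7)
The plan is to construct $\sigma_q$ by pulling back the consistent orientation of $\vSp$ that splits at $\sigma_p$. Let $O_p$ be that orientation, so $\sigma_p$ is its set of maximal elements, and set
\[
O_q:=\{\,\vsq\in\vSq : \vsq\proj p \in O_p\,\}.
\]
First I would check that $O_q$ is a consistent orientation of $\vSq$. Since $\vSp$ has no degenerate elements, $\vsq\proj p$ and $\svq\proj p = (\vsq\proj p)^*$ are distinct for each $s_q\in S_q$, so exactly one of them lies in $O_p$; hence $O_q$ contains exactly one orientation of each $s_q$. For consistency, suppose $\vrq,\vsq\in O_q$ with $\rvq<\vsq$; then $\rvq\proj p \le \vsq\proj p$ in $O_p$. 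If the inequality is strict, $O_p$ is inconsistent; if it is an equality, then $\vsq\proj p = (\vrq\proj p)^*$, putting both orientations of $r_q\proj p$ into $O_p$---ruled out by the absence of degenerate elements in $\vSp$.

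Now let $\sigma_q$ be the set of maximal elements of $O_q$; since $\vSq$ is finite, $\sigma_q$ splits $\vSq$ via $O_q$. For $\sigma_p\sub\sigma_q\proj p$, fix $\vsp\in\sigma_p$; by surjectivity pick $\vsq\in\vSq$ with $\vsq\proj p = \vsp$, so $\vsq\in O_q$, and extend $\vsq$ upward to some $\vrq\in\sigma_q$. Then $\vsp = \vsq\proj p \le \vrq\proj p \in O_p$, forcing $\vrq\proj p = \vsp$ by maximality of $\vsp$ in $O_p$. Combined with Lemma~\ref{Remark8}, this already gives $\sigma_p\sub(\sigma_q\proj p)\cap\vSp{\!\!}^\circ$ and $\sigma_p\sub(\sigma_q\proj p)^-$ (elements of $\sigma_p$ are neither trivial nor co-trivial in $\vSp$, let alone in the smaller $\sigma_q\proj p$).

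The main obstacle is the reverse inclusion: showing that every $\vrp\in(\sigma_q\proj p)\sm\sigma_p$ is trivial in $\vSp$ with a witness lying in $\sigma_p\sub\sigma_q\proj p$. Given $\vrq\in\sigma_q$ with $\vrp:=\vrq\proj p\notin\sigma_p$, pick $\vsp\in\sigma_p$ with $\vrp<\vsp$ (possible because $\vrp\in O_p = \dcl(\sigma_p)$ is not maximal) and, by surjectivity, a preimage $\vsq\in\vSq$ of $\vsp$; so $\vsq\in O_q$. Nestedness of $\vSq$ forces one of $\vrq\le\vsq$, $\vrq\le\svq$, $\rvq\le\vsq$, $\rvq\le\svq$. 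The case $\vrq\le\vsq$ is ruled out by maximality of $\vrq$ in $O_q$, which forces $\vrq=\vsq$ and hence $\vrp=\vsp$, contradicting $\vrp\notin\sigma_p$. The case $\rvq\le\vsq$ is ruled out because $\vrq,\vsq\in O_q$ together with $\rvq\le\vsq$ contradicts either the orientation property (if $\rvq=\vsq$) or the consistency (if $\rvq<\vsq$) of $O_q$. The case $\rvq\le\svq$ projects to $\rvp\le\svp$, which combined with the inversion $\svp<\rvp$ of $\vrp<\vsp$ yields $\svp<\svp$. This leaves $\vrq\le\svq$, whose projection $\vrp\le\svp$ is strict (as $\svp\notin O_p$ while $\vrp\in O_p$) and, together with $\vrp<\vsp$, makes $s_p$ witness the triviality of $\vrp$ in $\vSp$ with witness $\vsp\in\sigma_p\sub\sigma_q\proj p$; note $r_p\ne s_p$ since $\vrp\in O_p$ cannot equal $\svp$ nor $\vsp$. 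This yields $(\sigma_q\proj p)\cap\vSp{\!\!}^\circ\sub\sigma_p$ and $(\sigma_q\proj p)^-\sub\sigma_p$, completing the proof.
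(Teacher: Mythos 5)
Your construction $O_q := \{\vsq \in \vSq : \vsq\proj p \in O_p\}$ and the reverse-inclusion argument reproduce the first half of the paper's proof, but your consistency check for $O_q$ has a genuine gap, and it is precisely the gap that the paper closes by a case distinction you never make. You suppose $\vrq,\vsq \in O_q$ with $\rvq < \vsq$, project, and argue: strict inequality forces $O_p$ inconsistent, equality forces a degenerate separation in $\vSp$. But the dichotomy is not exhaustive. There is a third possibility: $r_q\proj p = s_q\proj p$ with $\vrq\proj p = \vsq\proj p$ (so $\rvq\proj p = \svq\proj p$). Then $\rvq\proj p < \vsq\proj p$ merely records that $\vsq\proj p$ is co-small; $O_p$ contains only the single orientation $\vrq\proj p = \vsq\proj p$ of this one separation and is neither inconsistent nor non-antisymmetric. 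This really occurs and really makes $O_q$ inconsistent: take $\vSp = \{\vsp,\svp\}$ with $\svp < \vsp$ and $\sigma_p=\{\vsp\}$, and $\vSq=\{\va,\av,\vb,\bv\}$ with $\av<\vb$ and $\bv<\va$ (no other relations), with $f_{qp}$ sending $\va$ and $\vb$ to $\vsp$. Then $O_q=\{\va,\vb\}$ is inconsistent and its set of maximal elements is not a star, yet the lemma is true here with $\sigma_q=\{\va\}$.

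The paper handles this by first assuming $\sigma_p$ has no co-small element, in which case your dichotomy becomes exhaustive (the residual possibility is shown to produce a co-small element of $\sigma_p$), and then running a separate, structurally different argument when $\sigma_p$ is a co-small singleton: every inconsistent pair of $O_q$ is shown to lie in the fibre $M$ over $\vsp$, one proves that $M\cap\vSq{\!\!}^\circ$ is nonempty and picks a maximal element $\vsone$ of it, prunes $O_q$ to the consistent partial orientation $O_q\sm(M\sm\{\vsone\})$, and extends via Lemma~\ref{lem:extension} to obtain the required $\sigma_q$. That second half of the argument is missing from your proof and cannot be obtained by patching the consistency step.
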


\begin{proof}
Let $O_p$ be a consistent orientation of~$\vSp$ of which $\sigma_p$ is the set of maximal elements. By Lemma~\ref{Remark8}, $\sigma_p$~is a star in~$\vSp{\!\!}^\circ$ (which it clearly also splits). Let
\[ O_q:=\{\vs\in\vSq\mid \vs\restricts p\in O_p\}\,. \]
As $ \vSp $ contains no degenerate element, $ O_q $~is antisymmetric: an orientation of~$\vSq$.%
   \COMMENT{}

Here our proof splits into two cases: that $\sigma_p$ has a co-small element or not. We first assume it does not.

Let us show that $ O_q $ is consistent. Suppose it is not. Then there are ${\rv,\vrdash\in O_q}$ such that $ \vr < \vrdash $ (and $ r\ne r' $). By~\eqref{*}%
   \COMMENT{}
   we have $ \vr\proj p\le\vrdash\proj p $. If ${r\proj p\ne r'\proj p}$ then this violates the consistency of $ O_p $, which contains $\rv\restricts p$ and $\vrdash\restricts p$ because ${\rv,\vrdash\in O_q}$. Hence $ r\proj p=r'\proj p $, with orientations $ \rv\proj p=\vrdash\proj p $ since $O_p$ contains both but is antisymmetric. Thus, $\vr\restricts p = \rvdash\restricts p$ is small, while $ \vrdash\proj p\in O_p$ lies below ($\le$) some $ \vs\in\sigma_p $. But then we must have $ \vrdash\proj p=\vs $ as otherwise $ \sv < \rvdash\restricts p\le\vrdash\restricts p$ is trivial, which contradicts our assumption that $\vs\in\sigma_p\sub \vSp{\!\!}^\circ$. Hence $ \vrdash\proj p=\vs \in\sigma_p$ is co-small, contradicting our assumption.

We have shown that $O_q$ is a consistent orientation of~$\vSq$. Let $ \sigma_q $ be the set of its maximal elements, a~splitting star of $ \vSq $. Let us show that $ (\sigma_q\proj p)\cap\vSp{\!\!}^\circ= (\sigma_q\proj p)^- =\sigma_p $.

Clearly, $(\sigma_q\proj p)\cap\vSp{\!\!}^\circ \sub (\sigma_q\proj p)^-$. For a proof of $(\sigma_q\proj p)^-\sub\sigma_p $ consider any $\vs\in\sigma_q$ for which $ \vs\proj p$ is not trivial in~$\sigma_q\proj p$.%
   \COMMENT{}
   As $\vs\in O_q$ we have $\vs\restricts p\in O_p$. Hence if $\vs\restricts p\notin\sigma_p$ then $\vs\restricts p < \vsdash\restricts p \in\sigma_p$ for some $\vsdash\in\vSq$. (Here we use that $f_{qp}$ is surjective.) Note that $\vsdash\in O_q$, by definition of~$O_q$. As $\vs\proj p$ and~$\vsdash\proj p$ both lie in~$O_p$, they cannot be inverse to each other, so $s\restricts p\ne s'\restricts p$. Recall that $\vs\restricts p$ is not trivial in~$\sigma_q\proj p$, by assumption. As $O_p$ is consistent, $\vsdash\restricts p\in O_p$ cannot be co-trivial either, even in~$\vSp$.%
   \COMMENT{}
   (Apply Lemma~\ref{lem:extension}(i) with $P=\{\vsdash\proj p\}$.)%
   \COMMENT{}
   Hence $\vs < \vsdash$, by Lemma~\ref{*ii} applied to $\sigma_q\cup\{\vsdash\}$ and $(\sigma_q\cup\{\vsdash\})\proj p = \sigma_q\proj p$.%
   \COMMENT{}
   This contradicts the maximality of~$\vs$ in~$O_q$ as an element of~$\sigma_q$.

To show the converse inclusion $ \sigma_q\proj p\supseteq\sigma_p $ (note that $ \sigma_p\sub\vSp{\!\!}^\circ $ by Lemma~\ref{Remark8}), consider any $ \vr\in\sigma_p $ and choose $ \vs\in O_q $ maximal with $ \vs\proj p=\vr $.%
   \COMMENT{}
   We need to show that $ \vs\in\sigma_q $, i.e., that $ \vs $ is maximal in $ O_q $. By \eqref{*}, any $\vsdash > \vs$ in~$O_q$ satisfies $\vsdash\restricts p\ge\vs\restricts p$. As $\vsdash\restricts p\in O_p$ by definition of~$O_q$, and $\vs\restricts p = \vr$ is maximal in~$O_p$ as an element of~$\sigma_p$, we must have equality: $\vsdash\restricts p = \vs\restricts p$. But this contradicts the choice of~$\vs$ in~$O_q$. Hence such an~$\vsdash$ does not exist, so $\vs\in\sigma_q$ as desired. This completes our proof that $(\sigma_q\restricts p)\cap\vSp{\!\!}^\circ  = (\sigma_q\proj p)^- = \sigma_p$, and hence of the lemma, in the case that $\sigma_p$ contains no co-small separation.

Let us now assume that $ \sigma_p $ does contain a co-small separation, $\vsp$~say.%
   \COMMENT{}
   Then the star $ \sigma_p $ cannot contain any other separations, as any such separation $\vr < \svp\le\vsp$ would be trivial and thus contradict Lemma~\ref{Remark8}.%
   \COMMENT{}
   
Let $ {M}\sub O_q $ be the set of all $ \vs \in \vSq $ with $ \vs\proj p=\vsp $. As before, for any inconsistent pair $ \rv,\vrdash\in O_q $ we have $ \rv\proj p=\vrdash\proj p\in \sigma_p = \{\vsp\}$ and this separation is co-small,%
   \COMMENT{}
   giving $ \rv,\vrdash\in{M} $.

Let us show that no minimal element $ \vrdash $ of $ {M} $ can be co-trivial in $ \vSq $. Any witness $ r $ for this would have orientations $ \rv,\vr <\vrdash $. One of them, $ \rv$~say, would be in~$ O_q $. Then $\rv,\vrdash$ are an inconsistent pair in~$O_q$, giving $ \rv,\vrdash\in{M} $ as above.%
   \COMMENT{}
   But now $ \rv<\vrdash $ contradicts the minimality of~$ \vrdash $ in~$M$.

Similarly, no $ \vr\in{M} $ can be trivial in~$\vSq$. Indeed, suppose $s\in S_q$ witnesses the triviality of~$\vr$. Then $\vr < \vs$ as well as $\vr < \sv$, which implies $\vr\proj p\le\vs\proj p$ as well as $\vr\proj p \le \sv\proj p$ by~\eqref{*}. One of these, $\vs\proj p$~say, lies in~$O_p$. Then $\vsp = \vr\proj p = \vs\proj p$ by the maximality of $\vsp$ in~$O_p$. But now $\vsp = \vs\proj p = \vr\proj p\le\sv\proj p$, so $\vsp$ is small as well as, by assumption, co-small, and hence degenerate. This contradicts our assumptions about~$\vSp$.

As $M$ has a minimal element,%
   \COMMENT{}
   we thus have $M':= M\cap\vSq{\!\!}^\circ\ne\es;$%
   \COMMENT{}
   let $ \vsone\in{M'}$ be maximal. Then $ \vsone $ is also maximal in $O'_q := O_q\sm (M\sm \{\vsone\})\sub O_q$: for any $ \vs\in O_q $ with $ \vsone\le\vs $ we have $\vsp = \vsone\proj p \le\vs\proj p\in O_p$, with equality by the maximality of~$ \vsp $ in~$O_p$, and hence $ \vs\in{M} $.%
   \COMMENT{}
   Now $O'_q$ is a partial orientation of~$S_q$, and it is consistent, since it has only one element in~$M$.%
   \COMMENT{}
   By Lemma~\ref{lem:extension}, $O'_q$~therefore extends to a (unique) consistent orientation $ O $ of $ \vSq $ in which $ \vsone $ is maximal. Let $ \sigma_q \owns \vsone$ be the set of all the maximal elements of~$O$. Then $\sigma_q$ is a splitting star of~$\vSq$. 

To complete our proof we have to show that $(\sigma_q\restricts p)\cap \vSp{\!\!}^\circ = (\sigma_q\proj p)^- = \sigma_p$. We have $\sigma_p=\{\vsp\}\sub (\sigma_q\proj p)\cap\vSp{\!\!}^\circ\sub (\sigma_q\proj p)^-$,%
   \COMMENT{}
   since $ \vsp=\vsone\proj p $ and $ \vsone\in\sigma_q $. For the converse inclusion let $ \vs\in\sigma_q $ be arbitrary; we show that if $ \vs\proj p\ne\vsp $ then $ \vs\proj p $ is trivial in~$\sigma_q\proj p$ and hence does not lie in~$(\sigma_q\proj p)^-$.

Assume that $\vs\proj p\ne\vsp$. Then $\vs\ne\vsone$. By~\eqref{*} and the star property for~$\sigma_q$ we therefore have $ \vs\proj p\le\svone\proj p=\svp\le\vsp $, so $ \vs\proj p $ is trivial unless $ \vs\proj p=\svp $. Then $ \sv\in{M} $ with $ \vsone\le\sv $.%
   \COMMENT{}
   The inequality is strict, since $\vsone$ and~$\vs$ are distinct%
   \COMMENT{}
   elements of the asymmetric set~$O$. Hence $\sv\notin M'$ by the choice of~$\vsone$, and thus $\sv\in M\sm M'$. Now $\sv$ cannot be trivial in~$\vSq$, since this would make $\vsone<\sv$ trivial too.%
   \COMMENT{}
   Hence $ \sv $ is co-trivial in $ \vSq $. But then $ \vs $ is trivial and thus cannot be maximal in~$ O $%
   \COMMENT{}
   (Lemma~\ref{lem:extension}(ii)), a contradiction.
   \end{proof}

Recall that our aim was, broadly, to show that the splitting stars $\sigma$ of $\vS$ are precisely the limits of the splitting stars of the projections~$\vSp$. In Lemma~\ref{lemma5} we showed that, except for the pathological case that $\sigma$ is a `finitely co-trivial' singleton, this is indeed true for all large enough~$p$.%
   \COMMENT{}
   In Lemma~\ref{LemmaX} we proved only a local converse of this: we did not show that the splitting stars of $\vSp$ are induced by splitting stars of~$\vS$, but by splitting stars of~$\vSq$ for all $q>p$. This is what we shall need in~\cite{duality1inf}.

Here, then, is a more direct converse of Lemma~\ref{lemma5}:

\begin{PROP}\label{prop:splittingpreimage}
Assume that $(\,\vSp\mid p\in P\,)$ is surjective, and that each of the $\vSp$ is nested and contains no degenerate separations. Let $p\in P$ be given. If $\sigma_p\sub \vSp$ splits~$\vSp$ and contains no co-small separation, then $\vS$ is split by a set~$\sigma$ such that $(\sigma\restricts p)\cap \vSp{\!\!}^\circ =(\sigma\proj p)^-= \sigma_p$.
\end{PROP}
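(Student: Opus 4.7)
The plan is to imitate, at the profinite level, the construction used in the proof of Lemma~\ref{LemmaX}: lift the consistent orientation $O_p:=\dcl(\sigma_p)$ of~$\vSp$ to an orientation of~$\vS$, show that it is closed and consistent, and extract the required splitting star~$\sigma$ from it. So let $O_p$ denote the consistent orientation of~$\vSp$ whose maximal elements form~$\sigma_p$, and set
$$T:=\{\,\vs\in\vS\mid\vs\restricts p\in O_p\,\}.$$
Since $\vSp$ (and hence~$\vS$) has no degenerate separations and $O_p$ is antisymmetric, $T$ contains exactly one of $\vs,\sv$ for each $s\in S$ and is thus an orientation of~$\vS$. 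It is closed in~$\vS$ by Lemma~\ref{closed}(ii): writing $T_q:=\{\,\vsq\in\vSq\mid\vsq\restricts p\in O_p\,\}$, the bonding maps satisfy $f_{qq'}(T_q)\sub T_{q'}$ whenever $q>q'\ge p$, and $T=\invlim(\,T_q\mid q\ge p\,)$.

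The main obstacle will be showing that $T$ is consistent, and this is where the hypothesis that $\sigma_p$ has no co-small separation enters essentially. I~would first upgrade it to: $O_p$ itself contains no co-small separation. Indeed, any co-small $\vs\in O_p$ would lie below some $\vx\in\sigma_p$, and $\sv\le\vs\le\vx$ would give $\sv\le\vx$; if $s=x$, then $\vx$ would be co-small (contradicting the hypothesis) or $x$ degenerate (forbidden), while if $s\ne x$ then $\xv<\vs,\sv$ strictly, making $\xv$ trivial with witness~$s$ and hence $\vx$ co-trivial, contradicting Lemma~\ref{Remark8}. Given this, any putative inconsistency $\rv,\vrdash\in T$ with $r\ne r'$ and $\vr<\vrdash$ projects by~\eqref{*} to $\vr\restricts p\le\vrdash\restricts p$ in~$\vSp$: if $r\restricts p\ne r'\restricts p$ the inequality is strict, so $\rv\restricts p,\vrdash\restricts p\in O_p$ witness an inconsistency of~$O_p$; and if $r\restricts p=r'\restricts p$, then either $\vr\restricts p=\vrdash\restricts p$ (which would force $r\restricts p$ to be degenerate in~$\vSp$) or $\vr\restricts p=(\vrdash\restricts p)^*$ (in which case $\vrdash\restricts p\in O_p$ is co-small), each case previously excluded.

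Thus $T$ is a closed consistent orientation of~$\vS$, and Corollary~\ref{closedsplits} delivers a splitting star~$\sigma\sub T$ of~$\vS$. It remains to verify $(\sigma\restricts p)\cap\vSp{\!\!}^\circ=(\sigma\restricts p)^-=\sigma_p$. The strategy is to show that $\sigma\restricts p$ is an antisymmetric star in~$\vSp$ containing~$\sigma_p$ and then invoke Lemma~\ref{LemmaB}(i). Since $\sigma\sub T$, its projection $\sigma\restricts p$ is contained in~$O_p$ and is therefore antisymmetric; and $\sigma\restricts p$ is a star in~$\vSp$ because $\sigma$ is a star and $\vSp$ has no degenerate separations. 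For $\sigma_p\sub\sigma\restricts p$, given $\vx\in\sigma_p$ I would lift it by surjectivity to some $\vr\in T$, apply Lemma~\ref{lem:closedsplitting} to find a maximal $\vs\in\sigma$ above~$\vr$, and observe that $\vs\restricts p\ge\vx$ lies in~$O_p$, forcing $\vs\restricts p=\vx$ by the maximality of~$\vx$ in~$O_p$. Lemma~\ref{LemmaB}(i), applied with $\sigma_p$ as the splitting star and $\sigma\restricts p$ as the enclosing antisymmetric star, then yields the required identities.
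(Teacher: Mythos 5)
Your proof is correct and takes essentially the same route as the paper's: define $O=\{\vs\in\vS\mid\vs\restricts p\in O_p\}$, show it is a closed consistent orientation of $\vS$, invoke Corollary~\ref{closedsplits} (equivalently, Lemma~\ref{lem:closedsplitting}) to get a splitting star $\sigma$, and then verify $(\sigma\restricts p)\cap\vSp{\!\!}^\circ=(\sigma\restricts p)^-=\sigma_p$. The paper simply compresses the consistency argument and the verification of the identity to the citation ``as in the proof of Lemma~\ref{LemmaX}, first case,'' which you have spelled out in full; the one small variation is that you finish the identity by applying Lemma~\ref{LemmaB}(i), whereas the proof of Lemma~\ref{LemmaX} to which the paper appeals argues via Lemma~\ref{*ii} instead (the paper explicitly notes, just before Lemma~\ref{LemmaX}, that either works). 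Your upgrade of the hypothesis to ``$O_p$ contains no co-small separation'' is also sound and streamlines the case analysis.
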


\begin{proof}
Let $ O_p $ be the consistent orientation of $ \vSp $ witnessing that $ \sigma_p $ splits~$ \vSp $, and let $ O:=\{\vs\in\vS\mid\vs\proj p\in O_p\} $. As in the proof of Lemma~\ref{LemmaX}, first case, $O$~is a consistent orientation of~$\vS$. Also as before, its set $ \sigma $ of maximal elements satisfies $ (\sigma\proj p)\cap\vSp{\!\!}^\circ=(\sigma\proj p)^-=\sigma_p $. But in order to show that $\sigma$ splits~$\vS$ we must prove that every element of~$O$ lies below some element of~$\sigma $. This follows from Lemma~\ref{lem:closedsplitting} if $ O $ is closed in $ \vS $. But by Lemma~\ref{closed} and the definition of~$ O $, the topological closure of $ O $ in $ \vS $ is $ O $ itself. So $ O $ is indeed closed in $ \vS $.
\end{proof}

What happens in Proposition~\ref{prop:splittingpreimage} if $\sigma_p$ does contain a co-small separation, $\vsp$~say? Then $\sigma_p = \{\vsp\}$ as before. Suppose $\vS$ has a splitting set $ \sigma $ as desired, one such that $ (\sigma\proj p)\cap\vSp{\!\!}^\circ=(\sigma\proj p)^-=\sigma_p $, witnessed by the consistent orientation $O$ of~$\vS$, say. Then $ \sigma $~has an element $ \vs $ in $M:=\{\,\vs\in\vS \mid \vs\proj p=\vsp\, \}$. Note that $\vs\in\sigma$ is neither trivial nor co-trivial, by Lemma~\ref{Remark8}. Let us show that $ \vs $ is maximal in $M\cap\vS{}^\circ$, indeed in the set $M'$ of elements of $M$ that are not co-trivial in~$\vS$.%
   \COMMENT{}

Suppose $M'$ has an element $ \vsdash > \vs $. As $\vsdash\notin O$ by the maximality of $\vs$ in~$O$,%
   \COMMENT{}
   we have $\svdash\in O$. Then $\sigma$ has an element~$\tv$ such that $\svdash\le\tv$. If $ \tv=\vs $ then $\svdash\le\vs < \vsdash$. Since $\svdash$ is not trivial, this can happen only with equality $\svdash=\vs$ \cite[Lemma~2.4]{AbstractSepSys}. But then $\vs$ is small, and hence so is~$\vs\proj p = \vsp$, by~\eqref{*}. Since $\vsp$ is also co-small it is degenerate, a contradiction. Hence $ \tv\ne\vs $. As $\tv$ and~$\vs$ lie in the star~$\sigma$, we have $ \vs\le\vt $, as well as $ \vt\le\vsdash $ by choice of~$ \tv $. But then $\vs\le\vt\le\vsdash$ and thus, by~\eqref{*}, $ \vsp\le\vt\proj p\le\vsp$ with equality.%
   \COMMENT{}
   In particular, $\tv\proj p\in\vSp{\!\!}^\circ$,%
   \COMMENT{}
   and hence $\tv\proj p\in (\sigma\proj p)\cap\vSp{\!\!}^\circ=\sigma_p = \{\vsp\}$, since $\tv\in\sigma$. But now $\vt\proj p = \vsp = \tv\proj p$, contradicting our assumption that $\vSp$ has no degenerate elements. This completes the proof that $\vsdash$ does not exist, and hence that our arbitrary $\vs\in\sigma\cap M$ is maximal in~$M'\supe M\cap\vS{}^\circ $.

The upshot of all this is that if $\sigma_p$ contains a co-small separation, we can only hope to find a splitting star $ \sigma $ of~$\vS$ inducing~$\sigma_p$, in the sense that $(\sigma\proj p)\cap\vSp{\!\!}^\circ=(\sigma\proj p)^-=\sigma_p $, if the set~$ M\cap\vS{}^\circ $ has a maximal element. However, while $M$~is closed in $ \vS $ and thus has maximal elements above all its elements (Lemma~\ref{lem:closedsplitting}), this need not be the case for $M\cap\vS{}^\circ$.  Indeed, it is even possible that every $\vSp$ contains such a splitting singleton $\{\vsp\}$ for which the conclusion of Proposition~\ref{prop:splittingpreimage} fails; so we cannot even get this conclusion for sufficiently large $p\in P$.%
   \COMMENT{}

\medbreak

Let us complete this section with a special case of Lemma~\ref{LemmaB} that has a shorter proof and is of special interest: the case that the star $\sigma$ containing a splitting star~$\sigma^\circ$ of~$\vS$ is its closure. While the closure of an arbitrary subset of~$\vS$ may well add nontrivial elements to that set, this is not the case when the set is a splitting star:

\begin{PROP}\label{bonuslemma}
	Assume that $ \vS=\invlim\!(\,\vSp\mid p\in P\,)$ is nested and has no degenerate elements. Let $ \sigma^\circ\sub\vS $ be a splitting star of~$ \vS $, and let $ {\sigma}\supseteq\sigma^\circ $ be its topological closure in~$ \vS $. Then $ {\sigma} $ is a star, and $ {\sigma}^-\! =\sigma^\circ $.
\end{PROP}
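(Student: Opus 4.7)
The plan is to prove the two claims in sequence: first that $\sigma$ is a star, then that $\sigma^-=\sigma^\circ$. Both parts exploit the fact that every element of $\sigma=\overline{\sigma^\circ}$ is, at each finite level $p$, the projection of some element of $\sigma^\circ$ (Lemma~\ref{closed}(i)), together with the star property of $\sigma^\circ$ and the consistent orientation it induces.

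For the star property I would fix distinct $\vr,\vs\in\sigma$ and aim to verify $\vr\restricts p\le\sv\restricts p$ for each $p\in P$. Since $\vr\ne\vs$ and $P$ is directed, some $q\ge p$ satisfies $\vr\restricts q\ne\vs\restricts q$; then Lemma~\ref{closed}(i) supplies $\vrdash,\vsdash\in\sigma^\circ$ with the same $q$-projections as $\vr$ and~$\vs$ respectively, and these are distinct because their $q$-projections are. The star property of $\sigma^\circ$ yields $\vrdash\le\svdash$, and two applications of~\eqref{*} transfer this inequality first to level $q$ and then down to level~$p$, giving $\vr\restricts p\le\sv\restricts p$ and hence $\vr\le\sv$.

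For $\sigma^-=\sigma^\circ$, the inclusion $\sigma^\circ\sub\sigma^-$ is immediate from Lemma~\ref{Remark8}, since elements of the splitting star $\sigma^\circ$ are not trivial in~$\vS$ and hence not trivial in~$\sigma$ either. For the reverse inclusion I would fix $\vr\in\sigma\sm\sigma^\circ$ and show that it is trivial in $\sigma$. Let $O$ be the consistent orientation of $\vS$ whose maximal elements are $\sigma^\circ$; since $\vS$ has no degenerate elements, exactly one of $\vr,\rv$ lies in~$O$. If $\vr\in O$ then $\vr\le\vt$ for some $\vt\in\sigma^\circ$, strictly because $\vr\notin\sigma^\circ$. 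The star property of $\sigma$ (applied to the distinct pair $\vr,\vt$) gives $\vr\le\tv$, and this too is strict: equality $\vr=\tv$ would put $\rv=\vt$ into $O$ alongside $\vr$, violating antisymmetry. Hence $t\in\sigma$ witnesses triviality of~$\vr$, as required.

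The main expected obstacle is the complementary case $\rv\in O$. There $\rv\le\vt$ for some $\vt\in\sigma^\circ$, so $\tv\le\vr$; combined with $\vr\le\tv$ from the star property of $\sigma$ this forces $\vr=\tv$ and thus $\rv=\vt\in\sigma^\circ$. If $\sigma^\circ=\{\rv\}$, then $\sigma=\overline{\{\rv\}}=\{\rv\}$ by Hausdorffness of $\vS$ (Lemma~\ref{CompactnessThm}), contradicting $\vr\in\sigma\sm\sigma^\circ$. Otherwise, picking any $\vtdash\in\sigma^\circ\sm\{\rv\}$, the star property of $\sigma^\circ$ applied to $\vtdash$ and $\rv$ gives $\vtdash\le\vr$, while the star property of $\sigma$ applied to $\vtdash$ and $\vr$ gives $\vtdash\le\rv$; both are strict since $t'\ne r$, so $r$ witnesses the triviality of $\vtdash\in\sigma^\circ$ in~$\vS$, contradicting Lemma~\ref{Remark8}. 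Thus this case cannot occur, which completes the argument.
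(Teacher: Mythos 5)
Your proof is correct and follows essentially the same route as the paper's: for the star property you use Lemma~\ref{closed}(i) to pull each projection back to $\sigma^\circ$ and invoke~\eqref{*}, exactly as the paper does; for $\sigma^-=\sigma^\circ$ you use the splitting orientation~$O$, the star property, and Lemma~\ref{Remark8}, with only a cosmetic difference in how the case analysis is arranged (you split on $\vr\in O$ vs.\ $\rv\in O$, deriving $\rv\in\sigma^\circ$ as a consequence in the latter case, whereas the paper dispatches $\rv\in\sigma^\circ$ separately up front and then splits on $O$).
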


\begin{proof}
	Let us show first that $ {\sigma} $ is a star. Given distinct $ \vr,\vs\in{\sigma} $, let $ p_0\in P $ be large enough that $ \vr\proj p_0\ne\vs\proj p_0 $. By Lemma~\ref{closed} there are for each $ p\ge p_0 $ distinct $ {\vrdash},{\vsdash}\in\sigma^\circ $ with $ \vr\proj p={\vrdash}\proj p $ and $ {\vsdash}\proj p=\vs\proj p $. As $ \sigma^\circ $ is a star we have $ {\vrdash}\le{\svdash} $, so $ \vr\proj p\le\sv\proj p $ by~\eqref{*}. This holds for every $ p\ge p_0 $ and hence shows that $ \vr\le\sv $ in~$ \vS $.
	
	It remains to show that $ {\sigma}^-\! = \sigma^\circ$. By Lemma~\ref{Remark8} we have $\sigma^-\!\supe\sigma^\circ$. To show th converse inclusion, let $ \vr\in{\sigma}\sm\sigma^\circ $ be given; we show that $\vr$ is trivial in~$\sigma$.

Suppose first that $ \rv\in\sigma^\circ $. As $\sigma$ is a star, any $ \vs\in\sigma^\circ $ with $ \vs\ne\rv $ is trivial with witness~$ r $, so $ \sigma^\circ=\{\rv\} $ by Lemma~\ref{Remark8}. But then $\vr\in {\sigma}=\sigma^\circ \not\owns\vr$, a~contradiction.%
   \COMMENT{}
	
	Suppose now that $ \rv\notin\sigma^\circ $. Let $ O $ be the orientation of $ \vS $ witnessing that $ \sigma^\circ $ splits $ \vS $. Then either $ \vr\in O $ or $ \rv\in O $. If $ \rv\in O $ there exists some $ \vs\in\sigma^\circ $ with $ \rv\le\vs $, as $ O $ splits at $ \sigma^\circ $. But also $ \vs\le\rv $ as $ {\sigma}\supseteq\sigma^\circ $ is a star, giving $ \rv=\vs\in\sigma^\circ $, contrary to assumption. Therefore we must have $ \vr\in O $. As $ O $ splits at $ \sigma^\circ $ there is an $ \vs\in\sigma^\circ $ with $ \vr < \vs$.%
   \COMMENT{}
   But we also have $ \vr < \sv $ by the star property of~${\sigma}$; note that $\vr\ne\sv$, since otherwise $\rv=\vs$, while $\rv\notin\sigma^\circ\owns\vs$ by assumption. Thus, $ \vr $ is indeed trivial with a witness in~$\sigma^\circ\sub\sigma$, as desired.
\end{proof}

\section{The tree set compactness theorem}\label{sec:compactnessthm}

The idea behind describing an algebraic structure as an inverse limit of finite structures lies, quite generally, in the promise to be able to lift properties known for those finite structures to the given profinite one. In the context of tree sets, one particularly interesting such property is that that the tree set is one `over' some collection~$\F$ of sets of separations: tree sets over $\F$ are dual to $\F$-tangles, and the latter provide a very general concept that can be used to capture highly cohesive substructures of a given structure such as a graph or a matroid. In this section we prove such a result: we show that a profinite nested separation system $ \vS=\invlim(\,\vSp\mid p\in P\,) $ is `over'~$\F$ if its projections $\vSp$ are, essentially, `over' the corresponding projections of~$\F$.

Let us define the concepts involved here. We start by lifting the inverse system $(\,\vSp\mid p\in P\,)$ defining~$\vS$ to an inverse system of the power sets $2^\vSp$, whose inverse limit describes the power set $2^\vS\!$ of~$\vS$:

\begin{LEM}\label{powerset}
Let $\S = (\,\vSp\mid p\in P\,)$ be an inverse system with maps~$f_{qp}$.
   \begin{enumerate}[\rm(i)]\itemsep=0pt\vskip-\smallskipamount\vskip0pt
   \item The family $2^\S\! := (\,2^{\vSp}\!\!\mid p\in P\,)$%
   \COMMENT{}
   is an inverse system with respect to $f_{qp}\colon 2^{\vSq}\to 2^{\vSp}$, where $f_{qp}$ maps $\sigma\sub \vSq$ to $\{\,f_{qp} (\vs)\mid \vs\in \sigma\,\}\sub \vSp$.%
   \COMMENT{}
   \item Every limit $(\,\sigma_p\mid p\in P\,)\in\varprojlim 2^\S$ is itself an inverse system with maps $f_{qp}\restricts \sigma_q$: a surjective restriction of~$\S$.\qed
   \end{enumerate}
\end{LEM}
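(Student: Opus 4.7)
My plan is to verify both parts essentially by unwinding definitions, since the lemma is primarily a compatibility check that packages the power-set construction into the inverse-system framework.

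For (i), I would first note that $f_{qp}$ as defined on $2^{\vSq}$ clearly lands in $2^{\vSp}$, since the image of $\sigma\sub\vSq$ under the pointwise extension is a subset of $\vSp$. The only real content is the cocycle identity $f_{rp}=f_{qp}\circ f_{rq}$ on $2^{\vSr}$ for $p<q<r$, which is immediate from the corresponding identity for the original~$\S$: for any $\sigma\sub\vSr$,
\[
f_{qp}(f_{rq}(\sigma))=\{\,f_{qp}(f_{rq}(\vs))\mid\vs\in\sigma\,\}=\{\,f_{rp}(\vs)\mid\vs\in\sigma\,\}=f_{rp}(\sigma).
\]

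For (ii), the key observation is that by definition of $\varprojlim 2^\S$, membership of $(\,\sigma_p\mid p\in P\,)$ in this inverse limit means $f_{qp}(\sigma_q)=\sigma_p$ for all $p<q$. Unfolding the power-set map, this says precisely $\{\,f_{qp}(\vs)\mid\vs\in\sigma_q\,\}=\sigma_p$, i.e.\ the restricted map $f_{qp}\restricts\sigma_q$ takes values in~$\sigma_p$ and is \emph{surjective} onto it. Compatibility of these restricted maps is inherited verbatim from~$\S$. Together with $\sigma_p\sub\vSp$, this verifies the definition of a (surjective) restriction of~$\S$ from Section~\ref{sec:limits}.

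I do not anticipate any genuine obstacle here; the lemma is essentially bookkeeping, as indicated by the $\qed$ attached to its statement. The point of isolating it is to make the inverse limit $\varprojlim 2^\S$ available as a tool in the subsequent discussion, so that families of subsets of the~$\vSp$ (such as the projections of a forbidden collection~$\F$) can themselves be treated as surjective inverse systems to which Lemma~\ref{CompactnessThm} applies.
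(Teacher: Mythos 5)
Your proof is correct and fills in exactly the routine verification that the paper treats as immediate (note the $\qed$ attached to the statement itself, indicating no proof is given). Both parts are straightforward definition-chasing as you describe, and there is nothing to compare beyond that.
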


If we write $\vS:=\varprojlim\S$ as usual, there is a natural bijection $\varphi\colon 2^\vS\!\to\varprojlim 2^\S$ given by $\sigma\mapsto (\,\sigma\proj p\mid p\in P\,)$. Its inverse $\varphi^{-1}$ coincides with the $\varprojlim$ operator on the elements of $\varprojlim 2^\S$ according to (ii) above, as $\sigma = \invlim (\,\sigma\proj p\mid p\in P\,)$. We shall use this bijection to topologize~$2^\vS$, giving it the coarsest topology that makes $\varphi$ continuous (where $\varprojlim 2^\S$ carries the product topology with the finite sets~$2^\vSp$ discrete, as usual).

Let us return to our context in which $\vS=\invlim(\,\vSp\mid p\in P\,)$ is a profinite separation system.%
   \COMMENT{}
   For $\F\sub 2^{\vSinf}\!$ we write
 $$\F\restricts p := \{\,\sigma\restricts p\mid \sigma\in\F\,\}$$
 and say that a nested subset $\tau\sub\vS$ is {\em over\/}~$\F$ if all its splitting stars lie in~$\F$. Note that $\F$ is closed in~$2^\vS\!$ if and only if it contains every set $\sigma\sub\vS$ such that for every $p\in P$ there exists some $\tilde\sigma\in\F$ with $\tilde\sigma\restricts p = \sigma\restricts p$.

Naively, our aim would be to prove that $\vS$ has a nested subset over~$\F\in 2^\vS\!$ as soon as $\F$ is closed%
   \footnote{See the proof of Theorem~\ref{treesetcompactness} below for why some such assumption will be necessary.}
   in~$2^\vS\!$ and every $\vSp$ has a nested subset $\tau_p$ over~$\F\proj p$. The standard way to prove this would be to show that all these~$\tau_p$ form an inverse system as a restriction of~$2^\S\!$, where $\S = (\,\vSp\mid p\in P\,)$ as earlier, and then to use the compactness theorem, Lemma~\ref{CompactnessThm}, to find a limit point $(\,\tau_p\mid p\in P\,)$ of this system, one compatible family of nested separation systems $\tau_p$ over~$\F\proj p$. We would then hope to show that the limit $\tau = \invlim (\,\tau_p\mid p\in P\,)$ of {\em this\/} family, a restriction of~$\S$, is a nested subset of~$\vS$ over~$\F$.

An immediate problem with this approach is that we cannot expect to find such nested $\tau_p\sub\vSp$ over~$\F\proj p$: recall that the sets in $\F\proj p$ will usually contain trivial or co-trivial separations, but splitting stars of nested separation systems do not. We shall therefore have to weaken the requirement of being over~$\F\proj p$ to being `essentially' over~$\F\proj p$, that is, up to the deletion of trivial separations.%
   \footnote{Co-trivial separations will not arise in the relevant projections; cf.\ Lemmas \ref{LemmaB} and~\ref{LemmaX}.}

The next problem, then, is in which separation system to measure this triviality. If we measure it in the relevant~$\vSp$, it can happen that the nested separation systems $\tau_p\sub\vSp$ that are essentially over~$\F\proj p$ still do not form an inverse system. (This is because projections of separations $\vrq\in\vSq$ that are trivial in~$\vSq$ can project, for $q>p$, to separations $\vsp\in\vSp$ that are {\em not\/} trivial in~$\vSp$~-- a~subtle but critical issue.%
   \footnote{Indeed, this caused us some headache. The problem is that the projections of $\vrq$ and the witness $\vsq$ of its triviality in~$\vSq$ can project to the same element of~$\vSp$, which then is no longer trivial because it has `lost its witness'. Compare Lemma~\ref{LemmaD} and the discussion preceding it.}%
   )

The solution will be to measure the triviality of separations in the stars $\sigma\in\F\proj p$ not in~$\vSp$ but in these~$\sigma$ themselves. Lemmas \ref{LemmaB} and~\ref{LemmaX} will then help us show that the nested subsystems $\tau_p$ of~$\vSp$ that are essentially over~$\F\proj p$ in this sense, do form an inverse system, and we can proceed as outlined above.

To help with the applicability of our compactness theorem later, we shall also make some provisions for such quirks as finitely trivial elements of~$\vS$, or elements $\vr = (\,\vrp\mid p\in P\,)$ such that for every $p$ there is an inconsistent pair $\sv,\vsdash\in\vS$ with $\sv\proj p = \rvp = \vsdash\proj p$. Let us call such $\rv\in\vS$ {\em finitely inconsistent\/}.

Note that if $\vr = (\,\vrp\mid p\in P\,)$ is finitely trivial in~$\vS$, then all the~$\vrp$ are small.%
   \COMMENT{}
   And the same is true if $\rv$ is finitely inconsistent, since $\vrp = \vs\restricts p \le \vsdash\restricts p = \rvp$ by \eqref{*} and the inconsistency of $\{\sv,\vsdash\}$. By Lemma~\ref{small}, this implies that $\vr$ too is small. Thus, finitely trivial separations are small, and finitely inconsistent separations are co-small.

Recall that $\F$ is closed in~$2^\vS\!$ if and only if it contains every set $\sigma\sub\vS$ such that for every $p\in P$ there exists some $\tilde\sigma\in\F$ with $\tilde\sigma\restricts p = \sigma\restricts p$. Let us call $\F$ {\em essentially closed in~$2^\vS\!$}%
   \COMMENT{}
   if it satisfies the following three conditions. The first is that $\F$ contains every set~$\sigma\sub\vS$ such that for every $p\in P$ there exists $\tilde\sigma\in\F$ such that either $(\tilde\sigma\proj p)^- = (\sigma\proj p)^-$ or $(\tilde\sigma\proj p)^- = \{\vs\!\proj p, \sv\!\proj p\}$ and $\sigma = \{\vs\}$. The second condition is that $\F$ contains every singleton $\{\rv\}$ such that $\rv$ is finitely inconsistent in~$\vS$.%
   \COMMENT{}
   The third condition is that $\F$ contains every singleton $\{\rv\}$ such that $\vr$ is finitely trivial in~$\vS$.

Note that, formally, these conditions depend not only on~$\F$ and~$\vS$ but also on the inverse system $\S=(\,\vSp\mid p\in P\,)$ of which $\vS$ is the inverse limit. While bearing this in mind, it would be cumbersome to keep making this explicit; after all, the topology on~$2^\vS\!$, and hence the notion of $\F$ being (properly) closed in~$2^\vS\!$, also depend on $\S$ itself.%
   \COMMENT{}

   However, we shall need later that if $\F$ is essentially closed with respect to~$\S$ then also with respect to any cofinal subsystem $\S_0 = (\,\vSp\mid p_0\le p\in P\,)$ of~$\S$, where $p_0$ is some fixed element of~$P$,%
   \COMMENT{}
   whose inverse limit will also be~$\vS$.%
   \COMMENT{}
   While it is immediate for the second and third condition in the definition of `essentially closed' that their validity for~$\S$ implies their validity for~$\S_0$,%
   \footnote{If a separation in~$\vS$ is finitely inconsistent with respect to~$\S_0$ then clearly also with respect to~$\S$, and similarly for finitely trivial.}%
   \COMMENT{}
    we need a lemma to ensure this also for the first condition:

\begin{LEM}\label{condition1down}
Let $p < q$, and consider stars $\sigma,\tilde\sigma$ in~$\vS$ such that ${(\sigma\proj q)^- = (\tilde\sigma\proj q)^-}$. Then also $(\sigma\proj p)^- = (\tilde\sigma\proj p)^-$.
\end{LEM}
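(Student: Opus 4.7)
The plan is to prove the two inclusions $(\sigma\proj p)^-\sub(\tilde\sigma\proj p)^-$ and the reverse symmetrically. Setting $A:=\sigma\proj q$, $B:=\tilde\sigma\proj q$, and $C:=A^-=B^-$, I exploit that $A$ and $B$ are finite subsets of~$\vSq$, so each decomposes as $A=C\cup T_A$ and $B=C\cup T_B$ with every element of $T_A$ (resp.\ $T_B$) trivial in~$A$ (resp.\ $B$) with a witness in~$C$, as observed in the paper just before Lemma~\ref{LemmaB}.

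To show $(\sigma\proj p)^-\sub(\tilde\sigma\proj p)^-$ I take $\vx\in(\sigma\proj p)^-$, write $\vx=\vr\proj p$ with $\vr\in\sigma$, and analyse $\vr\proj q\in A$. If $\vr\proj q\in C\sub B$, I pick $\vrdash\in\tilde\sigma$ with $\vrdash\proj q=\vr\proj q$, so $\vx=\vrdash\proj p\in\tilde\sigma\proj p$. Otherwise $\vr\proj q\in T_A$ with a witness $\vs\in C$, and applying \eqref{*} to $\vr\proj q<\vs$ and $\vr\proj q<\sv$ yields $\vx\le\vs\proj p$ and $\vx\le\sv\proj p$; if $s\proj p\ne r\proj p$ as unoriented separations in~$S_p$, then both inequalities are strict and $\vs\proj p\in\sigma\proj p$ witnesses the triviality of~$\vx$ in $\sigma\proj p$, contradicting $\vx\in(\sigma\proj p)^-$. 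Hence $s\proj p=r\proj p$, so either $\vs\proj p=\vx$ (whence $\vx\in C\proj p\sub\tilde\sigma\proj p$) or $\vs\proj p=\xv$, the delicate subcase addressed below. Once $\vx\in\tilde\sigma\proj p$ is in hand, I rule out triviality of $\vx$ in $\tilde\sigma\proj p$: any purported witness $\vy=\vy'\proj p$ with $\vy'\in\tilde\sigma$ sits either over $\vy'\proj q\in C\sub A$---in which case $\vy\in\sigma\proj p$ already witnesses triviality of $\vx$ in $\sigma\proj p$---or over $\vy'\proj q\in T_B$ with some witness $\vz\in C\sub A$, in which case \eqref{*} and the strictness of $\vx<\vy$ and $\vx<\yv$ yield, after verifying the needed distinctness, that $\vz\proj p\in\sigma\proj p$ witnesses triviality of~$\vx$ in $\sigma\proj p$; both outcomes contradict the hypothesis.

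The main obstacle I foresee is the subcase $\vs\proj p=\xv$ above. Here $\vx\le\xv$ forces $x$ to be small in~$\vSp$ and $\xv$ lies in $C\proj p\sub\tilde\sigma\proj p$, yet no element of~$C$ is immediately seen to project to~$\vx$. I plan to close the gap by combining the star property of~$\tilde\sigma$ with the identity $(\tilde\sigma\proj q)^-=C$: either trace the trivializing witness of $\vr\proj q$ inside $\tilde\sigma$ to locate some $\vt\in\tilde\sigma$ with $\vt\proj p=\vx$, or else show that this configuration itself produces an alternative triviality witness for~$\vx$ already inside $\sigma\proj p$, once more contradicting $\vx\in(\sigma\proj p)^-$.
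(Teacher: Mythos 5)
Your approach is genuinely different from the paper's. The paper proves Lemma~\ref{condition1down} in one line by citing the (forward-referenced) Lemma~\ref{iteratedminus}, which says $(\sigma\proj p)^- = \bigl((\sigma\proj q)^-\proj p\bigr)^-$ for any star $\sigma$; applied to both $\sigma$ and $\tilde\sigma$ and composed with the hypothesis $(\sigma\proj q)^- = (\tilde\sigma\proj q)^-$, equality at level $p$ is immediate. Your proof instead compares $\sigma$ and $\tilde\sigma$ directly through the common core $C$. That is a reasonable idea, but it is not a complete proof.

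The gap you flag yourself --- the subcase $\vs\proj p=\xv$ --- is exactly the crux, and neither of your two proposed repairs is in working order. Route (a): the only element of $C$ you can actually locate with $q$-projection $\vs$ projects at $p$ to $\xv$, not to $\vx$, and there is no visible reason why $\tilde\sigma$ should contain anything projecting to $\vx$ at all (nothing in the hypothesis forces $\vr\proj q\in B=\tilde\sigma\proj q$; it may lie in $T_A\sm B$). Route (b): when $\vs\proj p=\xv$, the inequalities $\vx\le\vs\proj p$, $\vx\le\sv\proj p$ collapse to $\vx\le\xv$, which says $\vx$ is small, not that it is trivial in $\sigma\proj p$ --- a small separation whose only co-orientation in the star is its own inverse has no triviality witness. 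The paper's proof of Lemma~\ref{iteratedminus} addresses precisely this situation (projections making a star non-antisymmetric) by invoking Lemma~\ref{LemmaB}(ii), which pins down $\sigma^-$ as $\{\vs,\sv\}$ when the projected star is not antisymmetric; you use neither Lemma~\ref{LemmaB} nor Lemma~\ref{LemmaD}, so that machinery is unavailable to you. The same trouble recurs in your final paragraph: ``after verifying the needed distinctness'' is another unproved step, and it fails for the same reason ($z\proj p$ can equal $x$ when $x$ is small). In short, the plan identifies the right difficulty but does not resolve it, whereas the paper resolves it by proving Lemma~\ref{iteratedminus} first (via Lemmas~\ref{LemmaB} and~\ref{LemmaD}) and then deducing the present lemma formally.
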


\begin{proof}
The assertion follows from Lemma~\ref{iteratedminus} applied to both $\sigma$ and~$\tilde\sigma$.%
   \COMMENT{}
\end{proof}

\goodbreak

\begin{LEM}\label{iteratedminus}
For every star $\sigma$ in~$\vS$ and $p<q$ we have $(\sigma\proj p)^- = ( (\sigma\proj q)^-\!\proj p )^-$.
\end{LEM}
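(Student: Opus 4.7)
The plan is to establish the claimed set equality by proving both inclusions, after noting that $\sigma\proj p = (\sigma\proj q)\proj p$, so that both sides are subsets of $\sigma\proj p$. Throughout I would freely use the finiteness observation made just before this lemma: in a finite set of separations, any element trivial in the set has a witness of its triviality lying in the set's $(\cdot)^-$-trim.

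For the inclusion $\supseteq$, I would take $\vr \in ((\sigma\proj q)^-\proj p)^-$ and suppose for contradiction that $\vr$ is trivial in $\sigma\proj p$ via some witness $\vs \in \sigma\proj p$. Lifting $\vs$ to some $\vb \in \sigma\proj q$ with $\vb\proj p = \vs$, if $\vb$ already lies in $(\sigma\proj q)^-$, then $\vs$ witnesses triviality of $\vr$ also in $(\sigma\proj q)^-\proj p$, contradicting our assumption. Otherwise $\vb$ is trivial in $\sigma\proj q$ with some witness $\vc \in (\sigma\proj q)^-$, and projecting $\vb < \vc$ and $\vb < \cv$ via~\eqref{*}, together with a short strictness argument (any equality would force $\vr = \vs$), shows that $\vc\proj p$ witnesses the triviality of $\vr$ in $(\sigma\proj q)^-\proj p$, again a contradiction.

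The reverse inclusion $\subseteq$ takes more care. I would take $\vr \in (\sigma\proj p)^-$; its non-triviality in the smaller set $(\sigma\proj q)^-\proj p$ follows automatically from its non-triviality in $\sigma\proj p$, so the substantive point is to show that $\vr \in (\sigma\proj q)^-\proj p$. To this end I would pick any preimage $\va \in \sigma\proj q$ of $\vr$ under the bonding map $f_{qp}$; if $\va \in (\sigma\proj q)^-$ we are done. Otherwise $\va$ is trivial in $\sigma\proj q$ with some witness $\vb \in (\sigma\proj q)^-$, and projecting $\va < \vb$ and $\va < \bv$ to~$p$ leaves the possibilities that either $\vb\proj p = \vr$ (so $\vb$ itself is the desired preimage of $\vr$) or $\vb\proj p$ would witness the triviality of $\vr$ in $\sigma\proj p$, contradicting our choice.

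The main obstacle will be the pathological case in which $\vb\proj p = \rv$ while $\vb\proj p \ne \vr$, so that the $q$-level witness projects to the inverse of $\vr$ at level~$p$ rather than to $\vr$ itself. I expect to handle this by choosing $\va$ maximal among preimages of $\vr$ in $\sigma\proj q$ and then exploiting the star property of $\sigma$ in $\vS$, together with Lemma~\ref{*ii}, to produce another element of $(\sigma\proj q)^-$ that still projects to $\vr$.
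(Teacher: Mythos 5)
Your `$\supseteq$' direction is correct and takes a more elementary route than the paper's. You lift the putative witness $\vs$ to some $\vb\in\sigma\proj q$ and, if $\vb$ is itself trivial in $\sigma\proj q$, pass to a witness $\vc\in(\sigma\proj q)^-$; the strictness check you sketch does go through, and $c\proj p$ then witnesses triviality of $\vr$ inside $(\sigma\proj q)^-\proj p$, the contradiction you want. The paper's argument for this inclusion is heavier: it first shows that $(\sigma\proj q)^-\proj p$ contains a splitting star of the separation system generated by $\sigma\proj p$, and then applies Lemma~\ref{LemmaB}.

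The `$\subseteq$' direction, however, has a genuine gap, and it sits exactly where you flag it. Choosing $\va$ maximal among preimages of $\vr$ in $\sigma\proj q$ disposes of the case $\vb\proj p=\vr$ (then $\vb\in(\sigma\proj q)^-\subseteq\sigma\proj q$ is a strictly larger preimage, contradicting maximality, and is in fact already the nontrivial preimage you want), but it gives nothing when $\vb\proj p=\rv$: the element projecting to $\vr$ is $\bv$, and $\bv$ need not lie in $\sigma\proj q$ at all, since stars are not closed under inversion, so maximality of $\va$ yields no contradiction and no nontrivial preimage of $\vr$ in $\sigma\proj q$. You propose to resolve this via the star property of $\sigma$ and Lemma~\ref{*ii}, but you do not say how, and neither tool obviously applies: Lemma~\ref{*ii} requires $r\proj p\ne s\proj p$, whereas here you are precisely in the situation $a\proj p=b\proj p$; and the star property of $\sigma$, once projected, is just the star property of $\sigma\proj q$, which you already have. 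Note that $\vb\proj p=\rv$ forces both $\vr$ and $\rv$ into $\sigma\proj p$, so this is exactly the non-antisymmetric configuration in which lifting nontriviality is delicate. The paper compresses this direction into an appeal to Lemma~\ref{LemmaD}, whose proof is the same maximality argument applied in $\sigma\proj q\cup(\sigma\proj q)^*$; the step you would still need to supply is that the nontrivial preimage Lemma~\ref{LemmaD} produces in that larger system can in fact be taken inside $\sigma\proj q$. As written, your `$\subseteq$' inclusion remains unproved.
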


\begin{proof}
   By Lemma~\ref{LemmaD} the nontrivial elements of~$\sigma\proj p$, those in~$(\sigma\proj p)^-$, are projections of elements of nontrivial elements of~$\sigma\proj q$, which are those in~$(\sigma\proj q)^-$, and thus lie in~$(\sigma\proj q)^-\!\proj p$. As they are nontrivial even in $\sigma\proj p = (\sigma\proj q)\proj p\supe {(\sigma\proj q)^-\proj p}$,%
   \COMMENT{}
    this shows~`$\sub$'.

For a proof of~`$\supe$', let us start from the inclusion ${(\sigma\proj q)^-\!\proj p}\sub \sigma\proj p$ noted above. Abbreviating $\sigma\proj q =: \rho$, and noting that $\sigma\proj p = (\sigma\proj q)\proj p$, we can rewrite this as $\rho^-\!\proj p\sub \rho\proj p$. We need to show that this implies
 \begin{equation}\label{inclusion}
(\rho^-\!\proj p)^- \sub (\rho\proj p)^-,
 \end{equation}
 i.e., that deleting the respective trivial elements in the stars $\rho^-\!\proj p$ and~$\rho\proj p$ maintains the inclusion. Thus, we have to show that elements of $\rho^-\!\proj p$ that are trivial in the larger star $\rho\proj p$ are trivial also in $\rho^-\!\proj p$ itself.%
   \COMMENT{}

In order to deduce this from Lemma~\ref{LemmaB}, with the $\vS$ there taken to be the separation system $\vR$ formed by $\rho\proj p$ and the inverses of its elements, let us show that $\rho^-\proj p$ contains a splitting star of~$\vR$. Since $\rho\proj p$ is a star it is consistent~\cite{AbstractSepSys},%
   \COMMENT{}
   and the set of its maximal elements splits~$\vR$.%
   \COMMENT{}
   By Lemma~\ref{Remark8} these separations in $\rho\proj p$ are nontrivial in~$\vR$. They therefore have a nontrivial preimage in~$\rho$ by Lemma~\ref{LemmaD}, i.e., lie in~$\rho^-\proj p$ as desired.

If $\rho^-\proj p$ is antisymmetric, then $(\rho^-\proj p)^-$ is equal to this splitting star of~$\vR$ by Lemma~\ref{LemmaB}(i). In particular, $(\rho^-\proj p)^-$ contains no separations that are trivial in~$\vR$,%
   \COMMENT{}
   proving~\eqref{inclusion}. If $\rho^-\proj p$ is not antisymmetric, it has the form $\rho^-\proj p = \{\vs,\sv\}$ by Lemma~\ref{LemmaB}(ii), with $s$ nontrivial in~$\vR$, again proving~\eqref{inclusion}.%
   \COMMENT{}
   \end{proof}

We need one more lemma to deal with the second condition in the definition of `essentially closed':

\begin{LEM}\label{cofinalLp}
Let $\sigma$ be a splitting star of some nested $\tau\sub\vS = \varprojlim\S$, where $\S = (\,\vSp\mid p\in P\,)$. Assume that, for all $p\in P'$ with $P'$ cofinal in~$P$, there exists $\vrp\in\vSp$ such that $(\sigma\proj p)^- = \{\rvp\}$ and $\sv\proj p = \rvp = \vsdash\proj p$ for some inconsistent $\sv,\vsdash\in\vS$.%
   \COMMENT{}
   Then $\sigma = \{\rv\}$ for some $\rv\in\vS$ that is finitely inconsistent in~$\S$.%
   \COMMENT{}
\end{LEM}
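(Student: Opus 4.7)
My first step would be to build a compatible family $(\rvp)_{p\in P}$: for $p<q$ in $P'$, Lemma~\ref{iteratedminus} applied to $\sigma$ gives $(\sigma\proj p)^- = ((\sigma\proj q)^-\proj p)^- = \{\rvq\proj p\}^-$; being a nonempty singleton by hypothesis, this forces $\rvq\proj p = \rvp$. Cofinality of $P'$ in $P$ then extends $(\rvp)_{p\in P'}$ to a compatible family $(\rvp)_{p\in P}$ defining $\rv\in\vS$. For finite inconsistency, given $p\in P$ I would pick $q\ge p$ in $P'$; the hypothesis supplies inconsistent $\sv,\vsdash\in\vS$ with $\sv\proj q = \vsdash\proj q = \rvq$, which project to $\sv\proj p = \vsdash\proj p = \rvp$ as required. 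Moreover, at each $p\in P'$ the inconsistency $\vs\le\vsdash$ projects to $\vrp\le\rvp$ (using that $\sv\proj p = \rvp$ gives $\vs\proj p = \vrp$), and the bonding maps extend this to all $p\in P$, giving $\vr\le\rv$ in $\vS$.

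\textbf{The crux: $\rv\in\sigma$.} Next, I would show $\rv$ actually lies in $\sigma$, not merely in its closure. Fix $p\in P'$ and pick $\vt\in\sigma$ with $\vt\proj p = \rvp$ (possible since $\rvp\in\sigma\proj p$); the claim is that $\vt = \rv$. Suppose otherwise: by directedness of $P$ and cofinality of $P'$ one can find $q\in P'$ with $q>p$ and $\vt\proj q\ne\rvq$. Now $\vt\proj q\in\sigma\proj q\sm(\sigma\proj q)^-$ is trivial in $\sigma\proj q$, and since $(\sigma\proj q)^- = \{\rvq\}$ the maximal witness must be $r_q$; so $\vt\proj q<\vrq$ strictly in $\vSq$. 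Order-respecting projection via $f_{qp}$ yields $\rvp = \vt\proj p\le\vrq\proj p = \vrp$, which combined with $\vrp\le\rvp$ from the previous step forces $\vrp = \rvp$: so $r_p$ is degenerate, contradicting the standing no-degenerate assumption needed for Lemma~\ref{Remark8} to make $\sigma$ a proper star. This is the main obstacle of the proof, and it is exactly where the finite-inconsistency hypothesis enters, furnishing the key inequality $\vrp\le\rvp$ that, paired with the reverse inequality coming from triviality in $\sigma\proj q$, collapses $r_p$.

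\textbf{Excluding other elements.} Finally, for any $\vs\in\sigma$ and $p\in P'$, either $\vs\proj p = \rvp$ or $\vs\proj p$ is trivial in $\sigma\proj p$ with maximal witness $r_p$; in both cases $\vs\proj p\le\rvp$, so $\vs\le\rv$ in $\vS$. If $\vs\ne\rv$, the star property of $\sigma$ (a proper star containing the distinct elements $\vs,\rv$) gives $\vs\le\rv^* = \vr$ as well, and $s\ne r$ by antisymmetry of $\sigma$ (else $\vr\in\sigma$ too, contradicting the proper-star property). The resulting strict inequalities $\vs<\vr$ and $\vs<\rv$ make $\vs$ trivial in $\vS$ witnessed by $r$; and since $\rv\in\sigma\sub\tau$ we have $r\in\tau$, so $\vs$ is trivial in $\tau$, contradicting Lemma~\ref{Remark8}. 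Hence $\sigma = \{\rv\}$ with $\rv\in\vS$ finitely inconsistent in~$\S$, as claimed.
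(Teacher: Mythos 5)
Your proof takes a genuinely different route from the paper's, and one step of it has a gap.

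Where the paper defines $\rho_p$ to be the set of \emph{all} $\rvp\in\sigma\proj p$ admitting an inconsistent preimage pair, observes $(\,\rho_p\mid p\in P\,)$ is a restriction of~$\S$ with nonempty fibres, and extracts $\rv$ from $\varprojlim\rho_p$ by Lemma~\ref{CompactnessThm}, you instead show via Lemma~\ref{iteratedminus} that the \emph{specific} $\rvp$'s from the hypothesis are automatically compatible along~$P'$. That is a clean alternative and arguably more explicit. Both of you then derive $\vrp\le\rvp$ from the inconsistent pair and~\eqref{*}. The real divergence is in showing $\rv\in\sigma$: the paper handles $|\sigma|=1$ directly and, for $|\sigma|\ge 2$, invokes Lemma~\ref{lem:boundedclosed} to conclude that the consistent orientation $O$ splitting at $\sigma$ is closed, so $\rv$ (which lies in $\overline\sigma$ by Lemma~\ref{closed}(i)) lies in $O$; co-smallness of $\rv$ then collapses $\sigma$ to a singleton. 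You instead pick $\vt\in\sigma$ with $\vt\proj p=\rvp$ and run a projection argument to show $\vt=\rv$.

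The gap is in the final contradiction of that projection argument. Having derived $\rvp\le\vrp$ (from $\vt\proj q<\vrq$ projected down) and $\vrp\le\rvp$ (from inconsistency), you conclude that $r_p$ is degenerate in~$\vSp$ and declare this a contradiction against ``the standing no-degenerate assumption needed for Lemma~\ref{Remark8} to make $\sigma$ a proper star''. But that assumption is about $\tau$ (or~$\vS$), not about~$\vSp$: $\sigma$ being a proper splitting star forces $\tau$ to have no degenerate elements, yet the bonding maps $f_{qp}$ need not be injective, so a nondegenerate $\vt\in\tau$ can perfectly well project to a degenerate $\vt\proj p=\vrp=\rvp$ in~$\vSp$. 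Nothing in the lemma's hypotheses rules this out, so $\vrp=\rvp$ is not by itself a contradiction. (This matters: the ability to absorb degeneracy at the finite levels is precisely why the paper routes through Lemma~\ref{lem:boundedclosed} and argues at the level of $\vS$ rather than the $\vSp$. Separately, note that your Step~1 also leans on Lemma~\ref{iteratedminus}, whose proof implicitly treats $\sigma\proj p$ and $\sigma\proj q$ as stars, i.e.\ assumes their projections are degenerate-free; so the same issue touches your construction of $\rv$ as well.) To repair the argument as written you would either need to add the hypothesis that the $\vSp$ contain no degenerate separations --- which is how Lemma~\ref{cofinalLp} is in fact used inside Theorem~\ref{treesetcompactness}, after the reduction made there --- or replace the degeneracy contradiction with the paper's closure argument via Lemma~\ref{lem:boundedclosed}, together with the co-smallness of~$\rv$ obtained from Lemma~\ref{small}.

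Your closing step, showing any $\vs\in\sigma\sm\{\rv\}$ would be trivial in $\tau$ with witness~$r$, is correct and matches the paper's use of the star property together with co-smallness of~$\rv$.
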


\begin{proof}
The sets $\rho_p\sub\sigma\proj p$%
   \COMMENT{}
   of separations $\rvp\in\vSp$ for which there are inconsistent ${\sv,\vsdash\in\vS}$ such that $\sv\proj p = \rvp = \vsdash\proj p$ form an inverse system $(\,\rho_p\mid p\in P\,)$ as a restriction of~$\S$. The $\rho_p$ are nonempty by assumption for $p\in P'$, and are hence nonempty for all~$p$. Pick a limit $\rv = (\,\rvp\mid \in P\,)\in\vS$ with $\rvp\in\rho_p$ for all~$p$.

As noted earlier, the $\rvp\in\rho_p\sub\sigma\proj p$ are co-small, since $\vrp = \vs\restricts p \le \vsdash\restricts p = \rvp$ by \eqref{*} and the inconsistency of $\{\sv,\vsdash\}$, the pair associated with them as in the definition of~$\rho_p$.%
   \COMMENT{}
   By Lemma~\ref{small}, then, $\rv$~is co-small too, and by Lemma~\ref{closed}(i) it lies in the closure of~$\sigma$ in~$\vS$. Let us show that, in fact, $\rv\in\sigma$.

This is clear if $|\sigma|=1$: then $|\rho_p|\le |\sigma\proj p| = 1$ too and hence $\rho_p = \{\rvp\}$ for all~$p$, giving $\rv = (\,\rvp\mid p\in P\,)\in\sigma$.%
   \COMMENT{}
   So assume that $|\sigma|\ge 2$. By Lemma~\ref{lem:boundedclosed}, then, $\rv$~lies in the consistent orientation $O$ of~$\vS$ whose set of maximal elements is~$\sigma$.%
   \COMMENT{}
   If $\rv\notin\sigma$, this means that there exists $\sv\in\sigma$ such that $\rv < \sv$, because $\sigma$ splits~$\vS$. Then $\sv > \rv$ is co-small too. Since $\sigma$ is a star, any other separation in~$\sigma$ would be trivial with witness~$s$. As $\sigma$ splits~$\vS$ and hence has no trivial elements, this means that $\sigma = \{\sv\}$ and hence $|\sigma|=1$, contrary to our assumption.

Thus, $\rv\in\sigma$. Since $\rv$ is co-small, this implies $|\sigma|=1$ and hence $\sigma = \{\rv\}$ as above. As $\rv\in\invlim (\,\rho_p\mid p\in P\,)$ is finitely inconsistent by definition of the~$\rho_p$, this completes our proof.
   \end{proof}

Assume that $\F\sub 2^{\vSinf}\!$ is a set of stars in~$\vS$.%
   \COMMENT{}
   For every $p\in P$, let $\L_p$%
   \COMMENT{}
   be the set of all stars%
   \COMMENT{}
   $\sigma\sub\vSp$ with $\sigma^- = \{\rvp\}$ for some~$\rvp$ such that $\vS$ has an inconsistent subset $\{\sv,\vsdash\}$ with $\!\sv\!\restricts p = \rvp = \vsdash\!\restricts p$. Then let
  $$\F_p := \F\restricts p\cup\L_p\,.$$
Let us say that a nested subsystem $\tau_p\sub\vSp$%
   \COMMENT{}
   is {\em essentially over~$\F_p$} if for every splitting star $\sigma_p$ of~$\tau_p$ there exists some $\sigma\in\F_p$ such that either $\sigma_p = \sigma^-$ or $\sigma_p = \{\vs\}$ and $\sigma^- = \{\vs,\sv\}$ for some $\vs\in\vS$.%
   \COMMENT{}

The nested subsets of~$\vSp$ that are essentially over~$\F_p$ form an inverse system:%
   \COMMENT{}

\begin{LEM}\label{Lqp}
Let $p < q$, and assume that $\vSp$ (and hence~$\vSq$) has no degenerate elements.%
   \COMMENT{}
   If $\tau_q$ is a nested subset of~$\vSq$ essentially over~$\F_q$,%
   \COMMENT{}
   then $\tau_p := \tau_q\proj p$ is a nested subset of~$\vSp$ essentially over~$\F_p$.
\end{LEM}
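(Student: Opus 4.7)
That $\tau_p$ is nested follows at once from~\eqref{*}: projections of nested sets are nested. The substantive task is to show that $\tau_p$ is essentially over~$\F_p$. My plan is to take any splitting star $\sigma_p$ of~$\tau_p$, lift it to a splitting star $\sigma_q$ of~$\tau_q$ using Lemma~\ref{LemmaX}, invoke the hypothesis on~$\tau_q$ to find $\sigma\in\F_q$ witnessing~$\sigma_q$, and then project this witness back down to produce the required $\rho\in\F_p$.

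The lift is available because the restriction $f_{qp}\restricts\tau_q\colon\tau_q\to\tau_p$ is an epimorphism between nested separation systems with no degenerate elements (these properties are inherited from~$\vSq$ and~$\vSp$). Lemma~\ref{LemmaX} then produces a splitting star $\sigma_q$ of~$\tau_q$ with $(\sigma_q\restricts p)^-=\sigma_p$. Since $\tau_q$ is essentially over $\F_q=\F\restricts q\cup\L_q$, there is some $\sigma\in\F_q$ such that either $\sigma_q=\sigma^-$, or $\sigma_q=\{\vt\}$ and $\sigma^-=\{\vt,\tv\}$. Two cases now arise, one for each type of element of~$\F_q$.

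If $\sigma=\tilde\sigma\restricts q$ for some $\tilde\sigma\in\F$, I take $\rho:=\tilde\sigma\restricts p\in\F\restricts p\sub\F_p$. By Lemma~\ref{iteratedminus},
\[
\rho^-=(\tilde\sigma\restricts p)^-=\bigl((\tilde\sigma\restricts q)^-\restricts p\bigr)^-=(\sigma^-\restricts p)^-\,.
\]
In the subcase $\sigma_q=\sigma^-$ this immediately yields $\rho^-=(\sigma_q\restricts p)^-=\sigma_p$. In the subcase $\sigma^-=\{\vt,\tv\}$ with $\sigma_q=\{\vt\}$, the assumption that~$\vSp$ has no degenerate elements forces $\vt\restricts p\ne\tv\restricts p$, and any two-element set of the form $\{\va,\av\}$ has no trivial elements (a witness would have to be distinct from~$a$); hence $\rho^-=\{\vt\restricts p,\tv\restricts p\}$ while $\sigma_p=\{\vt\restricts p\}$, matching the second alternative of `essentially over' with $\vs:=\vt\restricts p$. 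If instead $\sigma\in\L_q$, then $\sigma^-$ is already a singleton $\{\rvq\}$, so only the first subcase can occur and $\sigma_p=\{\rvq\restricts p\}=\{\rvp\}$; taking $\rho:=\{\rvp\}$, any inconsistent pair $\{\sv,\vsdash\}\sub\vS$ with $\sv\restricts q=\rvq=\vsdash\restricts q$ projects further to give $\sv\restricts p=\rvp=\vsdash\restricts p$, so $\rho\in\L_p\sub\F_p$ with $\rho^-=\sigma_p$. The main subtlety I expect is the bookkeeping in the second subcase of the first case, where the projection of $\{\vt,\tv\}$ must remain a two-element set and contain no new trivials; this is precisely where Lemma~\ref{iteratedminus} and the no-degenerate-elements hypothesis earn their keep.
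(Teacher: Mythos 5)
Correct, and it takes the same overall route as the paper: lift the splitting star $\sigma_p$ of $\tau_p$ via Lemma~\ref{LemmaX} to a splitting star $\sigma_q$ of $\tau_q$, apply the hypothesis that $\tau_q$ is essentially over~$\F_q$ to obtain a witness $\sigma\in\F_q$, and project. The technical details differ mildly. For $\sigma\in\F\proj q$ you compute $\rho^-$ by applying Lemma~\ref{iteratedminus} to the underlying star $\tilde\sigma\in\F$ and then splitting into subcases according to the shape of~$\sigma^-$; the paper instead applies Lemma~\ref{LemmaB} to $\rho=\sigma\proj p$ viewed as a star containing~$\sigma_p$, and distinguishes whether $\rho$ is antisymmetric, which yields the same two alternatives. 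For $\sigma\in\L_q$ you do not carry $\sigma\proj p$ along at all but directly exhibit the singleton $\{\rvp\}$ as the required member of~$\L_p$, whereas the paper argues that $\sigma\proj p$ itself lies in~$\L_p$; your route is shorter and sidesteps the question of whether $\sigma\proj p$ is antisymmetric, a point the paper's version implicitly takes for granted when it writes $\rvp\in\rho^-=\sigma_p$. Both routes rely on the same no-degenerate-elements hypothesis to ensure that $\{\vt\proj p,\tv\proj p\}$ stays a two-element set and that $r_p$ cannot be degenerate.
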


\begin{proof}
By~\eqref{*}, $\tau_p$~is nested. To prove that it is essentially over~$\F_p$, consider a splitting star $\sigma_p$ of~$\tau_p$. Applying Lemma~\ref{LemmaX} to $\tau_q\to\tau_p$ yields that $\tau_q$~is split by a star~$\sigma_q$ such that $(\sigma_q\proj p)^- = \sigma_p$. As $\tau_q$ is essentially over~$\F_q$, there exists a star $\sigma\in\F_q$ that contains~$\sigma_q$.%
   \COMMENT{}
   Let $\rho := \sigma\proj p$. By~\eqref{*}, $\rho$~is a star in~$\tau_p$. It contains the splitting star~$\sigma_p$, because $\sigma$ contains~$\sigma_q$ and $\sigma_q\proj p$ contains~$\sigma_p$. Hence $\rho^- = \sigma_p$ by Lemma~\ref{LemmaB}(i) if $\rho$ is antisymmetric. If not, then $\rho^- = \{\vs,\sv\}$ with $\sigma_p = \{\vs\}$ by Lemma~\ref{LemmaB}(ii). In either case it only remains to show that $\rho\in\F_p$.%
   \COMMENT{}

If $\sigma\in\F\proj q$, then $\rho\in (\F\proj q)\proj p = \F\proj p\sub \F_p$ as desired. Otherwise $\sigma\in\L_q$, with $\sigma^- = \{\rvq\}$ say, and there exist inconsistent $\sv,\vsdash\in\vS$ such that $\!\sv\!\restricts q = \rvq = \vsdash\!\restricts q$. Let $\rvp := \rvq\proj p$. Then also ${\!\sv\!\restricts p = \rvp = \vsdash\!\restricts p}$\rlap.

By \eqref{*} and the inconsistency of $\{\sv,\vsdash\}$ we have $\vrp = \vs\restricts p \le \vsdash\restricts p = \rvp$, so $\rvp$~is co-small. In particular, $\rvp$~cannot be trivial in~$\rho$, as that would also make it small and hence degenerate.%
   \COMMENT{}
   But $\rvp$ does lie in~$\rho$ by definition,%
   \COMMENT{}
    so~$\rvp\in\rho^-\!=\sigma_p$.

Since $\sigma_p$ is a star, the fact that $\rvp\in\sigma_p$ is co-small implies that any other element of~$\sigma_p$ would be trivial in it. This would contradict Lemma~\ref{Remark8}. Hence $\sigma_p = \{\rvp\}$, and thus again $\rho\in\L_p\sub\F_p$ as desired.
   \end{proof}

Here, then, is our compactness theorem for tree sets, and more generally for nested subsets, in profinite separation systems. Recall that a nested subset $\tau\sub\vS$ is {\em over\/}~$\F$ if all its splitting stars lie in~$\F$.

\begin{THM}\label{treesetcompactness}
Let $\vS=\invlim(\,\vSp\mid p\in P\,)$ be a profinite separation system without degenerate elements, and let $\F\sub 2^{\vSinf}\!$ be essentially closed in~$2^\vS\!$.%
   \COMMENT{}
   If every~$\vSp$ has a nested subset~$\tau_p$ that is essentially over~$\F_p$, then $\vS$ has a closed nested subset~$\tau$ over~$\F\!$.\looseness=-1 %
   \COMMENT{}
\end{THM}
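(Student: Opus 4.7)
The plan is to apply Lemma~\ref{CompactnessThm} to glue the hypothesized subsystems $\tau_p$ into a single inverse limit $\tau\sub\vS$, and then to verify, by combining Lemmas~\ref{lemma5}, \ref{LemmaB}, \ref{cofinalLp} and~\ref{condition1down} with the three clauses defining `essentially closed', that $\tau$ is closed, nested, and over~$\F$. By replacing $\S$ with its induced surjective inverse system $(\,\vS\restricts p\mid p\in P\,)$ if necessary, I~will assume that $\S$ is surjective.

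For each $p\in P$ I~would set
\[ \T_p\ :=\ \{\,\tau_p\sub\vSp : \tau_p\text{ is nested and essentially over }\F_p\,\}, \]
which is non-empty by hypothesis. Lemma~\ref{Lqp} shows that the map $\tau_q\mapsto\tau_q\restricts p$ sends $\T_q$ into $\T_p$ for $p<q$, so $(\,\T_p\mid p\in P\,)$ is an inverse system of non-empty finite sets. Lemma~\ref{CompactnessThm} then provides a limit $(\,\tau_p\mid p\in P\,)$ with $\tau_q\restricts p=\tau_p$ for $p<q$, and I~would take $\tau:=\invlim(\,\tau_p\mid p\in P\,)\sub\vS$. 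By Lemma~\ref{closed} this $\tau$ is closed in~$\vS$; the resulting system $(\,\tau_p\mid p\in P\,)$ is surjective by construction, so $\tau\restricts p=\tau_p$ for every~$p$; and because each $\tau_p$ is nested, Lemma~\ref{lemma3} applied to $\tau$ with the system $(\,\tau_p\mid p\in P\,)$ yields that $\tau$ is nested.

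The bulk of the work lies in verifying that $\tau$ is over~$\F$. Given a splitting star $\sigma$ of~$\tau$, I~would apply Lemma~\ref{lemma5} to $\tau$ via the surjective projection system $(\,\tau_p\mid p\in P\,)$. If its first alternative holds---that is, $\sigma=\{\rv\}$ with $\vr$ finitely trivial in~$\tau$---then any witness in some $\tau_q\sub\vSq$ also witnesses triviality in~$\vSq$, so $\vr$ is finitely trivial in~$\vS$; the third clause of `essentially closed' then gives $\sigma\in\F$. Otherwise there is $p_0\in P$ such that $\sigma_p^\circ:=(\sigma\restricts p)\cap\tau_p^\circ$ is a splitting star of~$\tau_p$ for all $p\ge p_0$, and since $\tau_p$ is essentially over $\F_p=(\F\restricts p)\cup\L_p$, some $\rho_p\in\F_p$ satisfies $\sigma_p^\circ=\rho_p^-$ or $\sigma_p^\circ=\{\vs\}$ with $\rho_p^-=\{\vs,\sv\}$. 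Lemma~\ref{LemmaB} applied to $\sigma\restricts p\supseteq\sigma_p^\circ$ then translates this into a relation between $\rho_p^-$ and $(\sigma\restricts p)^-$ up to the same singleton-versus-pair ambiguity.

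The main obstacle will be matching all this structural information, for varying~$p$, to the clauses of `essentially closed'. If cofinally many $\rho_p$ lie in $\L_p$, each such $\rho_p^-$ is a singleton whose element is the common projection of an inconsistent pair in~$\vS$; after reconciling this with the antisymmetric/non-antisymmetric dichotomy for~$\sigma\restricts p$, one would invoke Lemma~\ref{cofinalLp} to force $\sigma=\{\rv\}$ for some $\vr$ finitely inconsistent in~$\vS$, and the second clause of `essentially closed' then gives $\sigma\in\F$. Otherwise cofinally many $\rho_p$ come from $\F\restricts p$, say $\rho_p=\tilde\sigma_p\restricts p$ with $\tilde\sigma_p\in\F$; for each such~$p$ one obtains either $(\tilde\sigma_p\restricts p)^-=(\sigma\restricts p)^-$ or a singleton-pair match consistent with the second alternative of the first `essentially closed' clause, and Lemma~\ref{condition1down} then propagates these matches downwards to every $p\in P$, so the first clause of `essentially closed' applies and yields $\sigma\in\F$. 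The delicate bookkeeping in this case-split---particularly the non-antisymmetric projection case paired with $\rho_p\in\L_p$---is the step I expect to demand the most care.
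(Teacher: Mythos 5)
Your plan is the same as the paper's: use Lemma~\ref{Lqp} and Lemma~\ref{CompactnessThm} to find a compatible family $(\,\tau_p\mid p\in P\,)$ of nested subsystems, take $\tau = \invlim(\,\tau_p\mid p\in P\,)$, and then use Lemma~\ref{lemma5}, Lemma~\ref{LemmaB}, Lemma~\ref{cofinalLp} and Lemma~\ref{condition1down} together with the three clauses of essential closedness to show that each splitting star of $\tau$ lies in~$\F$. However, there are two genuine gaps, one of which is a real hole rather than just an omitted detail.

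The serious gap concerns degenerate elements. You only normalise the inverse system to be surjective, but Lemma~\ref{Lqp} (and the Lemma~\ref{LemmaX} it relies on) is stated, and is only true, under the hypothesis that $\vSp$ has no degenerate elements. The theorem's hypothesis says $\vS$ has none, but the individual $\vSp$ might. The paper gets around this by a compactness argument showing that some cofinal tail $\{p\ge p_0\}$ of~$P$ has all $\vSp$ degenerate-free, and then restricts to that tail. But this restriction is not free: the notion of being ``essentially closed'' is defined relative to the inverse system $\S$, so one has to re-verify that $\F$ remains essentially closed with respect to the cofinal subsystem. That verification is where Lemma~\ref{condition1down} (for the first alternative in the first clause) and the identity~\eqref{1bdown} via Lemma~\ref{iteratedminus} (for the second alternative) are first needed, and it is a step your proposal skips entirely. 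Without it, the very first application of Lemma~\ref{Lqp} is unjustified.

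The softer gap is in the final case analysis. In the subcase where cofinally often $\tilde\sigma_p\in\F\restricts p$ and the singleton–pair alternative~\eqref{ddag} holds, it is not enough to ``propagate with Lemma~\ref{condition1down}.'' The first clause of essential closedness in this alternative requires both that $(\tilde\sigma\proj p)^- = \{\vs\proj p,\sv\proj p\}$ for all $p$ \emph{and} that $\sigma=\{\vs\}$ for a single fixed $\vs\in\vS$. Establishing $\sigma=\{\vs\}$ is the crux of that subcase: one needs Lemma~\ref{LemmaD} to show the $\vsp$ are a compatible family and hence glue to a limit $\vs\in\tau$, and then a separate argument (using the star property, nontriviality, and antisymmetry) to rule out any $\vr\in\sigma\sm\{\vs\}$. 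The propagation of the pair identity down to smaller~$p$ also uses Lemma~\ref{iteratedminus} rather than Lemma~\ref{condition1down} directly. You flag the right region of the proof as delicate, but the step you single out (the $\L_p$ branch, which is handled cleanly by Lemma~\ref{cofinalLp}) is not where the real work is; the $\sigma=\{\vs\}$ argument is. Finally, a minor omission: you should also dispose of the case $\tau_p=\es$ for some~$p$, which forces $\tau=\es$ and requires a short separate check that $\es\in\F$.
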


\begin{proof}
Let us show first that, by replacing $P$ with a cofinal subset of the type $\{\,p\in P\mid p\ge p_0\}$ for some fixed $p_0\in P$, and the inverse system $(\,\vSp\mid p\in P\,)$ with the corresponding subsystem (whose inverse limit is still~$\vS$), we may assume that the~$\vSp$ have no degenerate elements. Indeed if every $\vSp$ has a degenerate element then the (non-empty) sets of these, one for each~$p$, form a restriction of our inverse system $(\,\vSp\mid p\in P\,)$ whose limits%
   \COMMENT{}
   are degenerate elements of~$\vS$. This contradicts our assumptions about~$\vS$. Hence there exists $p_0\in P$ such that $\vSpnought$ has no degenerate element. Then the same holds for every $\vSp$ with $p\ge p_0$, and we replace $P$ with the (cofinal) subset of all these~$p$, renaming the subset as~$P\!$.\looseness=-1

Note that replacing $P$ with such a cofinal subset entails no loss of generality. Indeed the premise of our theorem, that $\F$ is essentially closed in~$2^\vS\!$, continues to hold when being essentially closed is redefined with reference to such a subsystem: this is immediate for the second and the third condition in the definition of `essentially closed',%
   \COMMENT{}
   and it follows from Lemma~\ref{condition1down} for the first part of the first condition. For the second part of the first condition, note that if $\sigma = \{\vs\}$ then $(\tilde\sigma\proj p)^- = \{\vs\!\proj p, \sv\!\proj p\}$ follows for $p<q$ from the corresponding assertion for~$q$, with the same~$\tilde\sigma$, since
 \begin{equation}\label{1bdown}
 (\tilde\sigma\proj p)^-\! = ((\tilde\sigma\proj q)\proj p)^-\! = ((\tilde\sigma\proj q)^-\!\proj p)^-\! = (\{\vs\!\proj q, \sv\!\proj q\})\proj p)^-\! = \{\vs\!\proj p, \sv\!\proj p\}^-
 \end{equation}
 by Lemma~\ref{iteratedminus},%
   \COMMENT{}
   and $\{\vs\!\proj p, \sv\!\proj p\}^- = \{\vs\!\proj p, \sv\!\proj p\}$ because $\vs\!\proj p$ and $\sv\!\proj p$ are inverse to each other by~\eqref{*}.%
   \COMMENT{}

 The conclusion for the subsystem, that $\vS$ has a closed nested subset over~$\F$, coincides with that for the original system, since both $\vS$ and its topology%
   \COMMENT{}
   are the same for the original inverse system and its cofinal subsystem.

Let us now prove the assertion of the theorem. If $\tau_p = \es$ for some $p\in P$ then $\es$ is the only splitting subset of~$\tau_p$. As $\tau_p$ is essentially over~$\F_p$, this means that there is a star $\sigma\in\F_p$ such that $\sigma^-=\es$.%
   \COMMENT{}
   But then also $\sigma=\es\,$: since $\sigma$ is finite, it has a nontrivial maximal element if it is nonempty, so that also $\sigma^-\ne\es$.%
   \COMMENT{}
   Thus, $\es\in\F_p$, and hence $\es\in\F\restricts p$.%
   \COMMENT{}
   But this is possible only if $\es$ is also an element of~$\F$.%
    \COMMENT{}
   But then the empty subset of~$\vS$ is over~$\F$,%
   \COMMENT{}
   and we are done. From now on we assume that the sets $\tau_p$ are nonempty.%
   \COMMENT{}

For each $p\in P$, let $\T_p\ne\es$ be the set of all nonempty nested sets $\tau_p\sub\vSp$ that are essentially over~$\F_p$. By Lemma~\ref{Lqp}, $(\,\T_p\mid p\in P\,)$ is an inverse system with respect to the maps $\tau_q\mapsto \tau_q\restricts p$ for~$q > p$. Being subsets of~$\vSp$, the $\tau_p$ contain no degenerate separations.%
   \COMMENT{}

By Lemma~\ref{CompactnessThm}, there exists $(\,\tau_p\mid p\in P\,)\in\, \varprojlim\,(\,\T_p\mid p\in P\,)$.%
   \COMMENT{}
   This family ${(\,\tau_p\mid p\in P\,)}$ is itself a surjective inverse system, a restriction of $(\,\vSp\mid p\in P\,)$. The set
 $$\tau :=\, \varprojlim\,(\,\tau_p\mid p\in P\,)$$
 of its limits is closed in~$\vS$ by Lemma~\ref{closed}, and nested by Lemma~\ref{lemma3}. Note that%
   \COMMENT{}
   $\tau\restricts p = \tau_p = \tau_q\restricts p$ for all $p,q\in P$ with $p\le q$.

For a proof that $\tau$ is over~$\F$ we have to show that every $\sigma\sub\tau$ that splits~$\tau$ lies in~$\F$.%
   \COMMENT{}
   As $\F$ is, by assumption, essentially closed this is the case if $\sigma=\{\rv\}$ with $\vr$ finitely trivial in~$\vS$. We may therefore assume that $\sigma$ is not of this form. By Lemma~\ref{lemma5}, therefore, and our assumption that $\vS$ has no degenerate elements,%
   \COMMENT{}
   there exists $p_1\in P$ such that every $\sigma_p^\circ := (\sigma\restricts p)\cap\tau_p^\circ$
 with $p \ge p_1$ splits~$\tau_p$.%
   \COMMENT{}

The star $(\sigma\proj p)^-\!\supe\sigma_p^\circ$%
   \COMMENT{}
   must be antisymmetric: otherwise Lemma~\ref{LemmaB}(ii) would imply that $\sigma_p^\circ = \{\vs\}\subsetneq \{\vs,\sv\} = (\sigma\proj p)^-$ for some $\vs\in\tau_p^\circ$, in which case $\sv$ would also lie in~$\tau_p^\circ$ (because $\vs$ does) and hence  in $(\sigma\proj p)\cap\tau_p^\circ = \sigma_p^\circ$, a~contradiction. By Lemma~\ref{LemmaB}(i),%
   \COMMENT{}
   therefore, we have
 \begin{equation}\label{blabla}
   \sigma_p^\circ%
   \COMMENT{}
    = (\sigma\proj p)^-.
\end{equation}

As $\sigma_p^\circ$ splits~$\tau_p$, which is essentially over~$\F_p$, there exists $\tilde\sigma_p\in\F_p$ such that either
 \begin{equation}\label{bla}
   \tilde\sigma_p^- = \sigma_p^\circ%
   \COMMENT{}
 \end{equation}
or
 \begin{equation}\label{ddag}
   \tilde\sigma_p^- = \{\vsp,\svp\}\text{ with }\sigma_p^\circ = \{\vsp\}
 \end{equation}
 for some $\vsp\in\tau_p^\circ$.

Suppose first that these $\tilde\sigma_p$ can, for some set of $p\ge p_1$ cofinal in~$P$, be found in~$\L_p$ rather than in~$\F\restricts p$. By definition of~$\L_p$, this means that $|\tilde\sigma_p^-| = 1$ for all those~$p$, so $\tilde\sigma_p$ satisfies \eqref{bla} rather than~\eqref{ddag}. Thus,
 \begin{equation*}
 (\sigma\proj p)^- \specrel={\eqref{blabla}} \sigma_p^\circ \specrel={\eqref{bla}} \tilde\sigma_p^- = \{\rvp\}
 \end{equation*}
 with inconsistent $\sv,\vsdash\in\vS$, depending on~$p$, such that $\!\sv\!\restricts p = \rvp = \vsdash\!\restricts p$. By Lemma~\ref{cofinalLp} this yields that $\sigma = \{\rv\}$ with $\rv$ finitely inconsistent, which in turn implies $\sigma\in\F$ since $\F$ is essentially closed.

We may now assume that, for some set $P'$ of $p\ge p_1$ cofinal in~$P$, we have $\tilde\sigma_p\in\F\proj p$, and thus $\tilde\sigma_p = \tilde\sigma\proj p$ for some~$\tilde\sigma\in\F$ depending on~$p$. We may assume that either \eqref{bla} or~\eqref{ddag} holds for {\em all\/}%
   \COMMENT{}
   $p\in P'$ since one of them must occur cofinally often. Suppose first that \eqref{bla}~holds for all~$p$. By \eqref{bla}, \eqref{blabla} and~$\tilde\sigma_p = \tilde\sigma\proj p$ we then have
  $$(\tilde\sigma\proj p)^- = \,(\sigma\proj p)^-$$
for all $p\in P'$. By Lemma~\ref{condition1down} this identity holds for all~$p\in P$, not just those in~$P'$. But this implies that $\sigma\in\F$ since $\F$ is essentially closed in~$2^\vS\!$.%
   \COMMENT{}

Suppose finally that \eqref{ddag} holds for all $p\in P'$. Then $\{\vsp\} = \sigma_p^\circ = (\sigma\proj p)\cap\tau_p^\circ$%
   \COMMENT{}
   for all $p\in P'$. So each $\sigma\proj p$ has only one element that is not trivial in~$\tau_p$. Lemma~\ref{LemmaD} therefore implies that these $\vsp$ are compatible, in that $f_{qp}(\vsq) = \vsp$ for all $p < q$ in~$P'$. As $P'$ is confinal in~$P$, these $\vsp$ (and their projections to $p\in P\sm P'$)%
   \COMMENT{}
   form an element $\vs := (\,\vsp\mid p\in P\,)$ of~$\vS$. Note that $\vs\in\tau$ by Lemma~\ref{closed}(i), since $\tau$ is closed in~$\vS$.

Let us show that $\sigma = \{\vs\}$. Suppose not, and consider any $\vr\in\sigma\sm\{\vs\}$. Then $\vrp := \vr\proj p\ne\vs\proj p = \vsp$ for cofinally many~$p$.%
   \COMMENT{}
   But $\vrp\ne\vsp$ means that $\vrp$ is trivial in~$\sigma\proj p$ (and hence in~$\tau_p$), as otherwise  it would be in
 $$(\sigma\proj p)^-\! \specrel={\eqref{blabla}} \sigma_p^\circ \specrel={\eqref{ddag}} \{\vsp\}\,,$$
 a~contradiction.%
   \COMMENT{}

As every trivial separation in a finite separation system has a nontrivial witness, and $\vsp$ is the only nontrivial element of~$\sigma\proj p$, every $\vrp$ is trivial in $\sigma\proj p$ with witness~$s_p$. Thus, $\vrp\le\vsp$ as well as $\vrp\le\svp$ for all our cofinally many~$p$,%
   \COMMENT{}
   so $\vr\le\vs$ and $\vr\le\sv$ by definition of~$\le$ in~$\vS$ and~\eqref{*}.

If $r\ne s$, this shows that $\vr$ is trivial in~$\tau$. (Here we use that $\vs\in\tau$; see earlier.) This contradicts our assumption of $\vr\in\sigma$ by Lemma~\ref{Remark8}, since $\sigma$ splits~$\tau$. Hence $r=s$, and thus $\vr = \sv$ by the choice of~$\vr$. As $\vr$ was an arbitrary element of $\sigma\sm \{\vs\}$, this means that $\sigma\sm\{\vs\} = \{\vr\} = \{\sv\}$. Being a splitting star, $\sigma$~is antisymmetric,%
   \COMMENT{}
   so our assumption of $\vr\in\sigma$ implies by $\vr=\sv$ that $\vs\notin\sigma$. Hence, in fact, $\sigma=\sigma\sm\{\vs\} = \{\sv\}$.

As $\sigma = \{\sv\}$ and $\sv\!\proj p = \svp$ by definition of~$\vs$, we thus have
 $$\{\svp\} = \sigma\proj p = (\sigma\proj p)^-\! \specrel={\eqref{blabla}} \sigma_p^\circ \specrel={\eqref{ddag}} \{\vsp\}\,.$$%
   \COMMENT{}
   So $\vsp$ is degenerate, which contradicts our assumption about~$\vSp$ secured early in the proof. This completes our proof that $\sigma=\{\vs\}$.

Recall that, by assumption and~\eqref{ddag},%
   \COMMENT{}
   there exists for every $p\in P'$ some $\tilde\sigma\in\F$ such that $(\tilde\sigma\proj p)^- = \{\vsp,\svp\} = \{\vs\!\proj p, \sv\!\proj p\}$. As $P'$ is cofinal in~$P$, the same then holds for all $p\in P$; see~\eqref{1bdown}. Since $\F$ is essentially closed, this implies $\sigma\in\F$ as desired.
   \end{proof}

Finally, let us address the question of when the nested separation system $\tau\sub\vS$ found by Theorem~\ref{treesetcompactness} can be chosen to be a tree set. The essential core~$\tau^\circ$ of~$\tau$, obtained from~$\tau$ by deleting all its trivial and co-trivial separations,%
\COMMENT{}
is indeed a tree set%
\COMMENT{}
that is still over~$\F$: note that any splitting star~$\sigma$ of~$\tau^\circ$ also splits~$\tau$, because adding the trivial orientations of any separation in $\tau\sm\tau^\circ$ to a consistent orientation of~$\tau^\circ$ yields a consistent orientation of~$\tau$ with the same maximal elements.%
\COMMENT{}
The question, however, is whether $\tau^\circ$ is closed in~$\tau$ (and hence in~$\vS$, as required by the theorem).

In general,%
   \COMMENT{}
   it is possible that the essential core of a nested separation system fails to be closed in it: Example~\ref{ex:ray} shows one of the ways in which this can happen, and Example~\ref{ex:splittingnotclosed2} below describes another instance.

The proof of Theorem~\ref{treesetcompactness} cannot easily be altered to prevent this from happening to the set~$\tau$ it finds: given $ p\in P $ and a nested subset $ \tau_p $ of $ \vSp $ that is essentially over $ \F_p $, the essential core $ \tau_p^\circ $ will be a tree set essentially over $ \F_p $. However we might not be able to find a compatible family $ (\,\tau_p^\circ\mid p\in P\,) $ of such tree sets: for $ q>p $ a nontrivial separation from $ \vSq $ may well project to trivial separations in $ \vSp $, so $ \tau_q\proj p=\tau_p $ does not imply that $ \tau_q^\circ\proj p=\tau_p^\circ $.

\section{Orienting profinite nested separation systems}\label{sec:orientations}

In Section~\ref{sec:limtreesets} we examined how the splitting stars of a nested profinite separation system $ \vS=\invlim(\,\vSp\mid p\in P\,) $ relate to those of its projections. Let us now look at the consistent orientations which these splitting stars induce.

If $\vS$ is finite, all its consistent orientations~$O$ are induced by a splitting star~$\sigma$, in the sense that $O = \dcl(\sigma)\sm\amgis$, where $\sigma$ is the set of maximal elements~of~$O$ (and $\amgis := \{\sv\mid\vs\in\sigma\}$).%
   \COMMENT{}

This is not the case for profinite separation systems in general. For example, the orientations of the edge tree set $\vec E(T)$ of an infinite tree~$T$ towards some fixed end of~$T$ are consistent but do not split. Note that these orientations also fail to be closed in the topology of the inverse limit: by Corollary~\ref{closedsplits}, every closed consistent orientation of~$ \vS $ splits. This raises the question of whether the closed consistent orientations of a nested profinite separation system $ \vS $ are precisely those that split. Such a purely combinatorial description of the closed consistent orientations of~$ \vS $, and in addition one that makes no reference to the inverse system $(\,\vSp\mid p\in P\,) $, would certainly be useful.

Unfortunately, though, even tree sets can have consistent orientations that split but are not closed. Here is an example that is not a tree set, but typical:%
   \COMMENT{}

\begin{EX}\label{ex:splittingnotclosed}
	Let $ P$ be the set of all integers $p\ge 3 $. For each $ p\in P $ let $\vSp$ be the separation system consisting of a star $\sigma_p$ with $p$ elements, plus their inverses, where $\sigma_p$ contains two separations $\vsp$ and $\vrp$ such that $\sigma_p\sm\{\vsp,\vrp\}$ is a proper star, $\vsp\le\svp$, and $\vrp < \vt$ (as well as $\vrp < \tv$) for all $\vt\in\sigma_p\sm\{\vrp\}$. For each $p$ pick an element $\vtp$ of $\sigma_p\sm\{\vsp,\vrp\}$.
	
	Let $ f_{p+1,p}\colon\vSp{}_{+1}\to\vSp $ be the homomorphism which maps~$\vsp{}_{+1} $ and $\vtp{}_{+1} $ to~$ \vsp $, maps $\vrp{}_{+1}$ to $\vrp$, maps $ \sigma_{p+1}\sm\{\vsp{}_{+1},\vrp{}_{+1},\vtp{}_{+1}\}$ bijectively to~$ \sigma_p\sm\{\vsp,\vrp\} $, and is defined on the inverses of these separations so as to commute with the inversion. These maps induce bonding maps $ f_{qp}\colon\vSq\to\vSp $ by concatenation.
	
	Let $\vS:=\invlim (\,\vSp\mid p\in P\,) $. Then $\vS$ consists of an infinite star~$\sigma$%
   \COMMENT{}
   as well as the respective inverses, and we have $\sigma\proj p=\sigma_p$ for all $p\in P$. The separation $\vs=(\vsp\mid p\in P)$ lies in $\sigma$ and is small, as $\vsp\le\svp$ for all $p\in P$ by construction. However $\vs$ is not trivial in $\vS$: we have $\vs\not<\vr$ for $\vr:=(\vrp\mid p\in P)\in\sigma$, since $\vrp<\vsp$ for all~$p$; and for every other $\vt\in\sigma$ with $\vt\ne\vs$ there is a unique~${p\in P}$ such that $\vt\proj p=\vtp$, and for this $p$ we have $\vsp\not<\vtp$. Hence $\vs\not<\vt$ in $\vS$.\COMMENT{}

For future use, note that $\vr$ is trivial in~$\vS$, witnessed by~$s$.
	
	Consider the orientation $O:=\{\sv\}\cup(\sigma\sm\{\vs\})$ of~$\vS$. By the star property of $\sigma$ we have $\vt\le\sv$ for all $\vt\in\sigma$ with $\vt\ne\vs$, so $\sv$ is the greatest element of~$O$. As $\sv$ is not co-trivial in~$\vS$, as seen above, this means that $O$ is consistent and splits at~$\{\sv\}$.
	
	Let us now show that $O$ is not closed in $\vS$ by proving that $\vs$ lies in the closure of $O$ in $\vS$. By Lemma~\ref{closed} we need to show that $\vsp\in O\proj p$ for each $p\in P$. So let $p\in P$ be given and consider the (unique) element $\vt\in\sigma\sm\{\vs\}$ for which $\vt\proj (p+1)=\vtp_{+1}$. Then $\vt\proj p=\vsp$ by the definition of $f_{p+1,p}$ and hence $\vsp\in O\proj p$.
	
	Therefore the orientation $O$ of~$\vS$ splits but is not closed in $\vS$.\qed
\end{EX}

The next two lemmas show that every orientation $ O $ of $ \vS $ that splits but fails to be closed must more or less look like Example~\ref{ex:splittingnotclosed}. First we show that if $ O $ has a greatest element then this must be co-small:

\begin{LEM}\label{lem:greatestclosed}
Let $O$ be a consistent orientation of $\vS$ with a greatest element~$\vm$. Then either $ O $ is closed in $ \vS $, or $ \vm $ is co-small but not co-trivial and $ \mv $ lies in the closure of~$ O $.
\end{LEM}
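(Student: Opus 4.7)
The plan is to prove the contrapositive: assume $O$ is not closed and fix some $\vt\in\overline O\sm O$; I would then show successively that $\vt=\mv$, that $\mv\in\overline O$, that $\vm$ is co-small, and finally that $\vm$ is not co-trivial.

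First I would pin $\vt$ into the interval between $\mv$ and $\vm$. Since $O$ is an orientation and $\vt\notin O$, the inverse $\tv$ lies in~$O$, so $\tv\le\vm$ by the maximality of~$\vm$ in $O$, and inversion gives $\mv\le\vt$. For the upper bound, Lemma~\ref{closed}(i) provides, for each $p\in P$, some $\vr_p\in O$ with $\vr_p\restricts p=\vt\restricts p$; as $\vr_p\le\vm$, \eqref{*} yields $\vt\restricts p\le\vm\restricts p$ for every~$p$, and hence $\vt\le\vm$.

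Next I would argue that $\vt=\mv$. Certainly $\vt\ne\vm$, since $\vm\in O$ while $\vt\notin O$. Suppose towards a contradiction that also $\vt\ne\mv$; then $\vt<\vm$ strictly, and inverting gives $\mv<\tv$ strictly. Note that $t$ cannot be degenerate (otherwise $\vt=\tv\in O$) and that $t\ne m$ (otherwise $\vt\in\{\vm,\mv\}$). But then $\tv,\vm\in O$ with $t\ne m$ and $\mv<\tv$, which is an inconsistent pair inside~$O$, contradicting the consistency of~$O$. Hence $\vt=\mv$, which immediately yields $\mv\in\overline O$ and $\mv=\vt\le\vm$, i.e.\ that $\vm$ is co-small.

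It remains to rule out the co-triviality of~$\vm$. Suppose some $s\ne m$ witnessed $\mv$ being trivial, so that $\mv<\vs$ and $\mv<\sv$. Exactly one of $\vs,\sv$ lies in $O$; say $\vs\in O$. Then $\vs,\vm\in O$ with $s\ne m$ and $\mv<\vs$, which is again an inconsistent pair in~$O$~-- a contradiction. So $\vm$ is not co-trivial, completing the proof.

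The crux is the inconsistency argument used in the second and third paragraphs: a separation strictly sandwiched between $\mv$ and $\vm$ automatically produces an inconsistent pair inside $O$, namely its $O$-oriented instance together with~$\vm$. I do not foresee any real obstacles beyond the degenerate and $t=m$ corner cases in the step $\vt=\mv$, which collapse directly either into the desired conclusion or into a contradiction with $O$ being an orientation.
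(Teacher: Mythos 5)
Your proof is correct and follows essentially the same route as the paper's: pick a point $\vt$ of $\overline O\setminus O$, use Lemma~\ref{closed}(i) and the maximality of $\vm$ to deduce $\vt\le\vm$, and then force $\vt=\mv$ by a consistency argument, from which co-smallness of $\vm$ follows. The only cosmetic difference is that the paper rules out co-triviality of $\vm$ at the outset by citing Lemma~\ref{lem:extension}(i) rather than rederiving it directly, and it does not bother to establish your (unused) lower bound $\mv\le\vt$.
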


\begin{proof}
Note first that $\vm$ cannot be co-trivial, because consistent orientations of separation systems cannot have co-trivial elements (Lemma~\ref{lem:extension}\,(i)).
Suppose that $ O $ fails to be closed, that is, there is some nondegenerate $ \vs\in O $ for which $ \sv $ lies in the closure of~$ O $ in $ \vS $. By Lemma~\ref{closed} there exists for every $ p\in P $ some $ \vr=\vr(p)\in O $ such that $ \sv\restricts p=\vr\restricts p $. As $ \vm $ is the greatest element of $ O $ we have $ \vr\le\vm $, and hence $ \sv\restricts p=\vr\restricts p\le\vm\restricts p $ for every~$p$, giving $ \sv\le\vm $ in $ \vS $. If $ s\ne m $ this contradicts the consistency of~$ O $. So $ s=m $, and hence $ \vs=\vm $ since these lie in~$O$. Thus, $\vs = \vm \ge \sv$ is co-small.
\end{proof}

If a consistent orientation of $\vS$ has two or more maximal elements, then these cannot be co-small. Indeed, since $\vS$ is nested, the only way such elements could be incomparable would be that they are inconsistent.%
   \COMMENT{}
   Thus if such an orientation splits, we would expect it to be closed in~$\vS$. This is indeed the case:

\begin{LEM}\label{lem:boundedclosed}
Any consistent orientation~$O$ of~$\vS$ that splits at a star of order at least~2 is closed in~$ \vS $.
\end{LEM}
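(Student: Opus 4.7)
The plan is to show that any $\vs$ in the topological closure of $O$ in~$\vS$ already lies in~$O$, so that $O$ equals its closure. Assuming otherwise, $\sv\in O$ (with the case of degenerate $s$ being immediate), and by the splitting property at~$\sigma$ there is some $\vt\in\sigma$ with $\sv\le\vt$. Applying Lemma~\ref{closed}(i) gives, for each $p\in P$, a separation $\vr_p\in O$ with $\vr_p\restricts p=\vs\restricts p$, and splitting again yields $\vt_p\in\sigma$ with $\vr_p\le\vt_p$.

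I would then run a cofinality-based case analysis, using~\eqref{*} and directedness of~$P$ throughout to lift projection-level inequalities to inequalities in~$\vS$. In Case~1, where $\vr_p\le\vt$ cofinally, I obtain $\vs\le\vt$; combined with $\sv\le\vt$ this either makes $\vt$ co-trivial when $s\ne t$ (contradicting $\vt\in O$ by Lemma~\ref{lem:extension}(i)), puts $\vs\in\sigma\sub O$ when $\vt=\vs$, or reduces to $\vt=\sv$. In this last sub-case I invoke $|\sigma|\ge 2$ to pick some $\vtdash\in\sigma\sm\{\vt\}$: the star property together with $\vs\le\sv$ then forces both $\vtdash\le\vs$ and $\vtdash\le\sv$, making $\vtdash$ trivial, which contradicts Lemma~\ref{Remark8}.

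In Case~2, where $\vt_p$ can be chosen distinct from~$\vt$ cofinally, the star property gives $\vt_p\le\tv$, hence $\vs\le\tv$, which with $\sv\le\vt$ forces $\vt=\sv$. Choosing $\vtdash\in\sigma\sm\{\vt\}$ via $|\sigma|\ge 2$ yields $\vtdash\le\vs$ by the star. A further cofinal dichotomy on whether $\vr_p\le\vtdash$ holds then concludes: one branch yields $\vs\le\vtdash$ and hence $\vs=\vtdash\in\sigma\sub O$; the other lets me pick $\vt_p\in\sigma\sm\{\vt,\vtdash\}$, so $\vt_p\le\tvdash$, from which $\vs\le\tvdash$, i.e.\ $\vtdash\le\sv$, which combines with $\vtdash\le\vs$ to make $\vtdash$ trivial in~$\vS$---again contradicting Lemma~\ref{Remark8}.

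The main obstacle will be the bookkeeping around the cofinal subsets of~$P$: since the $\vt_p$ cannot be chosen coherently, each step requires the cofinality-plus-\eqref{*} principle to lift local comparabilities to global ones in~$\vS$. The assumption $|\sigma|\ge 2$ is essential---it supplies the second star element $\vtdash$ whose forced comparabilities with $\vs$ or $\sv$ produce exactly the trivial or co-trivial witnesses that splitting stars are, by Lemma~\ref{Remark8}, forbidden to contain, ruling out the one-element pathology exhibited by Example~\ref{ex:splittingnotclosed}.
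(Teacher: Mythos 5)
Your proof is correct, but it takes a genuinely different route from the paper's. The paper fixes the second maximal element $\vn\in\sigma$ from the outset and, using the \emph{strict} inequality $\vn<\mv$, chooses a single $q$ such that $\vn\restricts p<\mv\restricts p$ for all $p\ge q$. This choice of $q$ then rules out $\vr(p)\le\vn$ outright, so that $\vr(p)\le\nv$ for \emph{every} $p\ge q$; the proof then concludes in one step with $\vn\le\vm$, contradicting the distinctness of the two maximal elements of $O$. Your argument avoids this `eventual strictness' device and instead runs a cofinality-based case analysis: each branch is resolved by forcing a trivial or co-trivial element into the splitting star (contradicting Lemma~\ref{Remark8}) or by forcing $\vs\in O$. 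The two approaches use the same ingredients---Lemma~\ref{closed}(i), the splitting property, the star relations, and $|\sigma|\ge 2$---but the paper's is more compact because choosing $q$ early collapses the dichotomy you spend Cases~1 and~2 resolving, and it produces a single uniform contradiction rather than four. Your version, while longer, has the mild virtue of exposing explicitly \emph{which} combinatorial configurations in~$\sigma$ are being excluded by Lemma~\ref{Remark8}.

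One small remark: in Case~1 when $s\ne t$, you cite Lemma~\ref{lem:extension}(i) to forbid a co-trivial $\vt\in O$; since $\vt\in\sigma$, citing Lemma~\ref{Remark8} directly (splitting-star elements are neither trivial nor co-trivial) is slightly cleaner and matches the citation you use in the other branches. Also, in Branch~2b, you implicitly need $\sigma$ to have an element distinct from both $\vt$ and $\vtdash$ --- this is fine, because the splitting property guarantees some $\vt_p\in\sigma$ with $\vr_p\le\vt_p$, and the case hypotheses then force $\vt_p\notin\{\vt,\vtdash\}$; it would be worth saying so explicitly, since a priori $|\sigma|$ could be exactly $2$ (in which case that branch is simply vacuous).
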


\begin{proof}
We verify the premise of Lemma~\ref{closed}\,(ii), with $O_p:= O\proj p$ for all~$p$.

Suppose that $O\ne O':=\invlim (\,O_p\mid p\in P\,)$. Then $O'$ contains some ${\sv\notin O}$, so~$\vs\in O$. This $\vs$ lies below some maximal element~$\vm$ of~$O$. Let $ \vn $ be another maximal element of~$ O $.  As $\vm$ and $\vn$ are consistent and incomparable, but have comparable orientations,%
   \COMMENT{}
   they point towards each other. In particular, $\vn\le\mv$. As $\vm$ and $\vn$ both lie in~$ O$ they cannot be inverse to each other, so $\vn < \mv$. Hence there exists $q\in P$ such that $ \vn\proj p < \mv\proj p $ for all $ p\ge q$.%
   \COMMENT{}

Consider any~$ p\ge q $. As $ \sv\in O' $ there exists $ \vr=\vr(p)\in O $ with $ \vr\proj p=\sv\proj p $. This $ \vr $ lies below a maximal element of~$ O $. But we cannot have $ \vr\le\vn $, since in that case $ \vn\proj p\ge\vr\proj p=\sv\proj p\ge\mv\proj p $, contradicting the fact that $p\ge q$.

Therefore $ \vr $ lies below some other maximal element of $ O $ and hence, like that element, points towards $ \vn $: we have $ \vr\le\nv $. Thus, $ {\nv\proj p}\ge {\vr\proj p} = {\sv\proj p} \ge {\mv\proj p} $. As this inequality holds for each $ p\ge q $ we have $ \vn\le\vm $ in $ \vS $, contrary to the maximality of $ \vn\ne\vm $ in~$ O $.
\end{proof}

Lemmas~\ref{lem:greatestclosed} and~\ref{lem:boundedclosed} together show that a consistent orientation of $ \vS $ that splits but is not closed in $\vS$ has a very particular form: it must have a co-small greatest element that is not co-trivial. Rather than describing such orientations directly, let us see if we can characterize those $ \vS $ that admit such an orientation.

Let us call a nested separation system $\vS$ {\em normal\/} if the consistent orientations of~$\vS$ that split are precisely those that are closed in~$\vS$.

\begin{THM}\label{thm:regular}
    If all small separations in $\vS$ are trivial, then $\vS$ is normal. In particular, regular profinite tree sets are normal.
\end{THM}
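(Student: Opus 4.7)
The plan is to combine Corollary~\ref{closedsplits} with Lemmas~\ref{lem:greatestclosed} and~\ref{lem:boundedclosed}, which together already reduce normality to a single pathological case that the hypothesis rules out. One direction is immediate: Corollary~\ref{closedsplits} says that every closed consistent orientation of a nested separation system splits. So the substance of the theorem is the converse, namely that every splitting consistent orientation of $\vS$ is closed in~$\vS$.

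Let $O$ be a consistent orientation of $\vS$ that splits at some star $\sigma$. The first step is to do a case split on $|\sigma|$. If $|\sigma|\ge 2$, Lemma~\ref{lem:boundedclosed} directly gives that $O$ is closed in~$\vS$, and we are done. If $|\sigma|=1$, say $\sigma=\{\vm\}$, then $\vm$ is the greatest element of $O$ (since $O=\dcl(\sigma)$), and Lemma~\ref{lem:greatestclosed} applies. That lemma leaves two possibilities: either $O$ is already closed, or $\vm$ is co-small but not co-trivial.

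The key step is to eliminate the second possibility using the hypothesis. If $\vm$ is co-small, then $\mv$ is small, and our assumption that every small separation in $\vS$ is trivial makes $\mv$ trivial---that is, $\vm$ is co-trivial, contradicting Lemma~\ref{lem:greatestclosed}. Hence $O$ must be closed in this subcase too, establishing normality.

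For the ``in particular'' clause, note that a regular separation system has no small separations by definition, so the hypothesis ``all small separations in $\vS$ are trivial'' is vacuously satisfied; applying the main part of the theorem yields that regular profinite tree sets are normal. There is no real obstacle here, since both auxiliary lemmas have already been proved; the only point requiring care is the quick passage from ``co-small'' to ``co-trivial'' via the hypothesis, and the observation that consistent orientations cannot contain co-trivial elements (used implicitly through Lemma~\ref{lem:greatestclosed}, which has already accounted for this via Lemma~\ref{lem:extension}(i)).
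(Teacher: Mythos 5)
Your proof is correct and follows essentially the same route as the paper's: reduce to the case $|\sigma|\ge 2$ (Lemma~\ref{lem:boundedclosed}) and the case $|\sigma|=1$ (Lemma~\ref{lem:greatestclosed}), then use the hypothesis to rule out the ``co-small but not co-trivial'' alternative. Your version is a bit more explicit about the co-small $\Rightarrow$ small $\Rightarrow$ trivial $\Rightarrow$ co-trivial contradiction, which the paper compresses; the only thing you omitted is the trivial edge case $\sigma=\emptyset$ (which forces $\vS=\emptyset$ and hence $O$ closed), but that is cosmetic.
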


\begin{proof}
	Let us assume that all small separations in $\vS$ are trivial and show that $\vS$ is normal. By Corollary~\ref{closedsplits}, all closed consistent orientations of~$\vS$ split. Conversely, let $O$ be a consistent orientation of $\vS$ that splits, say at~$\sigma$. If $\sigma$ is empty then so is~$\vS$, so $O$ is closed in~$\vS$. If $\abs{\sigma}\ge 2$ then $O$ is closed by Lemma~\ref{lem:boundedclosed}. And if $\abs{\sigma}=1$, then $O$ is closed by Lemma~\ref{lem:greatestclosed}.
\end{proof}

Theorem~\ref{thm:regular} leaves us with the problem to characterize the normal nested separation systems among those that do contain nontrivial small separations. Interestingly, there can be no such characterization, at least not in terms of separation systems alone: normality is not an invariant of separation systems!

Indeed, our next example shows that we can have isomorphic profinite nested separations systems of which one is normal and the other is not:%
   \COMMENT{}

\begin{EX}\label{ex:splittingnotclosed2}
Let $ G=(V,E) $ be a countably infinite star with centre $ z $ and $ x\ne z $ another vertex of $ G $. Let $ \vS_G $ be the separation system consisting of $(\{z\},V)$ and all separations of the form $(\{y,z\},V-y)$ for $y\ne z$, plus inverses. Let $ (A,B)\le(C,D) $ if either $ A\sub C $ and $ B\supseteq D $, or ${(A,B)=(\{x,z\},V-x)}$ and $ (C,D)=(V-x,\{z,x\}) $.

Let $ Q $ be the set of all finite subsets of $ V $ that contain $ x $ and $ z $. For all $ q\in Q $ let $ \vSq $ be the set of all $ (A,B)\proj q:=(A\cap q\,,\,B\cap q) $ with $ (A,B)\in\vS_G $. Given $ (A_q,B_q), (C_q,D_q) \in\vSq$ let $ (A_q,B_q)\le(C_q,D_q) $ if and only if there are $ (A,B)\le(C,D)$ in $\vS_G $ such that $ (A,B)\proj q=(A_q,B_q) $ and $ (C,D)\proj q=(C_q,D_q) $.

Then $ \vS_G=\invlim(\,\vSq\mid q\in Q\,) $. Note that $ \vS_G $ contains only two co-small separations: $ (V-x,\{z,x\}) $ and $(V,\{z\}) $. The first of these is a splitting singleton star, since its down-closure (minus its inverse) orients~$\vS_G$ consistently: the orientation consists of all $(A,B)$ with $x\in B$. This orientation of $\vS_G$ is clearly closed in~$\vS_G$.%
   \COMMENT{}
   In particular, the inverse $(\{x,z\},V-x)$ of its greatest element does not lie in its closure.

The other co-small separation, $ (\{z\},V) $, is co-trivial.%
   \COMMENT{}
   By Lemmas \ref{lem:greatestclosed} and~\ref{lem:boundedclosed}, therefore, all splitting consistent orientations of $ \vS_G $ are closed, so $\vS_G$ is normal.%
   \COMMENT{}

However, $ \vS_G $ is isomorphic to the separation system $ \vS $ of Example~\ref{ex:splittingnotclosed}, which has a consistent orientation that splits but is not closed, and hence is not normal. Indeed, both separation systems consist of a countably infinite star (plus inverses) that contains one small but nontrivial separation, and one trivial separation with all the other separations as witnesses, and has no further relations.\qed
\end{EX}

Example~\ref{ex:splittingnotclosed2} may serve as a reminder that the topology on a profinite separation system~$\vS$ depends, by definition, on the inverse system of which $\vS$ is the inverse limit. It shows that this dependence is not just formal:

\begin{COR}
The topologies of isomorphic profinite nested separation systems can differ.\qed
\end{COR}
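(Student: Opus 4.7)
The plan is to deduce the corollary directly from Example~\ref{ex:splittingnotclosed2}, by observing that the property of being normal, while defined using topological language, is invariant under any isomorphism of separation systems that also happens to be a homeomorphism. Thus, producing two isomorphic profinite nested separation systems with different normality status forces their topologies to be genuinely different.

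First I would note that the notion of a consistent orientation $O$ of a separation system $\vS$ \emph{splitting} is purely combinatorial: it is defined in terms of $\le$, ${}^*$, and down-closures in~$\vS$, all of which are preserved by any isomorphism of separation systems. So if $\varphi\colon\vS\to\vS'$ is an isomorphism of separation systems, then a consistent orientation $O$ of~$\vS$ splits if and only if $\varphi(O)$ is a consistent orientation of~$\vS'$ that splits.

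Next I would observe that normality of~$\vS$ additionally requires that the splitting consistent orientations of~$\vS$ coincide with the closed ones, where `closed' refers to the topology coming from the chosen inverse system. Hence if $\varphi\colon\vS\to\vS'$ were not only an isomorphism of separation systems but also a homeomorphism, then the closed subsets of $\vS$ and~$\vS'$ would correspond under~$\varphi$, and therefore $\vS$ would be normal if and only if $\vS'$ is.

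Finally, Example~\ref{ex:splittingnotclosed2} exhibits a normal profinite nested separation system~$\vS_G$ isomorphic, as a separation system, to the profinite nested separation system~$\vS$ of Example~\ref{ex:splittingnotclosed}, which is not normal. By the preceding paragraph no isomorphism between them can be a homeomorphism, and so their topologies must differ. The only step that needs any care is verifying that splitting is indeed preserved by separation-system isomorphism, but this is immediate from the definition since splitting is expressed entirely in terms of $\le$, ${}^*$ and the set of maximal elements of a consistent orientation.
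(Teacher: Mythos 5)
Your proof is correct and takes essentially the same route as the paper: the corollary is stated with a built-in \qed precisely because it is intended as a direct consequence of Example~\ref{ex:splittingnotclosed2}, and your argument simply makes explicit the (correct) implicit reasoning that splitting is isomorphism-invariant while normality is not, so no isomorphism between the two examples can be a homeomorphism.
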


In our particular case of Examples~\ref{ex:splittingnotclosed} and~\ref{ex:splittingnotclosed2}, the difference in the topologies hinged on the question of whether the small but nontrivial element of an infinite star lies in the closure of the rest of that star. In Theorem~\ref{thm:regular} we noted that if $\vS$ contains no nontrivial small separations it is normal.

In the rest of this section we shall prove a converse of this: if $\vS$ contains no trivial (small) separation, i.e., is a tree set, it fails to be normal as soon as it contains an infinite star. The reason, interestingly, is that {\em every\/} maximal infinite star contains a small separation that lies in the closure of its other separations~\cite{KneipMaster}. The only way in which this is compatible with normality is that that separation is not only small but in fact trivial.

Recall that $ \vS=\invlim(\,\vSp\mid p\in P\,) $ is a nested profinite separation system.

\begin{LEM}\label{non-c-star}
If $ \vS$ is a tree set containing an infinite star, then $\vS$ is not normal.
\end{LEM}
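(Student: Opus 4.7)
The plan is to exhibit a consistent orientation of~$\vS$ that splits at a singleton but fails to be closed. First, extend the given infinite star to a maximal star $\sigma\subseteq\vS$; Zorn's lemma applies because unions of chains of stars remain stars, and the result is still infinite. The result from~\cite{KneipMaster} cited in the preceding discussion then supplies a small separation $\vs\in\sigma$ that lies in the topological closure of $\sigma\setminus\{\vs\}$ in~$\vS$. Since $\vS$ is a tree set, $\vs$~is neither trivial nor degenerate.

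Consider $O_0:=\{\,\vr\in\vS\mid\vr\le\sv\,\}\setminus\{\vs\}$, the candidate for the consistent orientation splitting at $\{\sv\}$. The first key step is to verify that $O_0$ is an orientation of~$\vS$. For $r=s$ this is immediate: $\sv\in O_0$ by smallness of~$\vs$ (which gives $\sv\le\sv$) and non-degeneracy (so $\sv\ne\vs$), while $\vs\notin O_0$ by construction. For $r\ne s$, nestedness together with $\vs\le\sv$ (by smallness of $\vs$) ensures that at least one of $\vr,\rv$ is~$\le\sv$; and they cannot both be~$\le\sv$, for otherwise both $\vs\le\vr$ and $\vs\le\rv$ would hold, making $s$ a witness of the triviality of~$\vs$ and contradicting the tree set hypothesis. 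Hence exactly one of $\vr,\rv$ lies in~$O_0$.

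Next, $O_0$ is consistent: if distinct $\rv,\vrdash\in O_0$ satisfied $\vr\le\vrdash$, then $\vs\le\vr\le\vrdash\le\sv$ would hold, with $\vrdash\ne\vs,\sv$ because $r'\ne s$; this would make $r'$ a witness of the triviality of~$\vs$, again a contradiction. And by construction $O_0$ lies in the down-closure of $\sv$, with $\sv$ as its unique maximal element, so $O_0$ splits at $\{\sv\}$.

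Finally, to see that $O_0$ is not closed: every $\vt\in\sigma\setminus\{\vs\}$ satisfies $\vt\le\sv$ by the star property and $\vt\ne\vs$ by choice, so $\sigma\setminus\{\vs\}\subseteq O_0$. Since $\vs$ lies in the closure of $\sigma\setminus\{\vs\}$, it also lies in the closure of~$O_0$; but $\vs\notin O_0$. Thus $O_0$ is a consistent splitting orientation that is not closed, so $\vS$ is not normal. The main obstacle will be verifying that $O_0$ is an orientation at all: one must use the nontriviality of $\vs$ (which is where the tree set hypothesis crucially enters) to rule out the degenerate case in which both orientations of some $r\ne s$ lie below~$\sv$.
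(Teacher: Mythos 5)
Your argument follows the same overall strategy as the paper's: produce a small, non-trivial separation $\vs$ lying in the closure of the remaining elements of an infinite star, and show that $\dcl(\sv)\setminus\{\vs\}$ is a consistent orientation of~$\vS$ that splits at~$\{\sv\}$ but is not closed. The two places where you diverge are worth comparing. To obtain~$\vs$, you pass to a maximal star via Zorn's lemma and then invoke the result of~\cite{KneipMaster} quoted in the preceding discussion. The paper instead constructs~$\vs$ directly, from an arbitrary infinite star~$\sigma$ and without maximality: for each~$p$ it takes the set of separations of~$\vSp$ that are hit by infinitely many elements of~$\sigma$ (nonempty since $\vSp$ is finite and $\sigma$ is infinite), notes these sets form a restriction of the inverse system, and picks a limit point; this limit is small by the star property and Lemma~\ref{small}, and lies in the closure by Lemma~\ref{closed}(i). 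So where you outsource the key technical step, the paper keeps the lemma self-contained. Conversely, to build the orientation, you define $O_0$ explicitly and verify the orientation and consistency properties by hand; the paper simply extends $\{\sv\}$ to a consistent orientation via the Extension Lemma (Lemma~\ref{lem:extension}), which applies because $\sv$ is not co-trivial, and then observes that $\sv$ is maximal in all of~$\vS$ (co-small in a tree set), so the extension lies in $\dcl(\sv)$. Your route is more elementary here, the paper's more economical.

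Two small slips in your verification. In the consistency step you assert ``$\vrdash\ne\vs,\sv$ because $r'\ne s$'', but $r'\ne s$ is not given in advance---it is what needs to be ruled out at that point. The case $r'=s$ (so $\vrdash=\sv$) does still give a contradiction, but via your earlier orientation argument (one then has $\vr<\sv$ and $\rv\le\sv$ with $r\ne s$, so $r$ witnesses the triviality of~$\vs$), not via the chain of inequalities you write down. Likewise, in the orientation check the claimed witness of triviality should be~$r$, not~$s$. Both are cosmetic and easily repaired, but should be stated correctly.
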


\begin{proof}
Let $\sigma\sub\vS$ be an infinite star. Like all stars, $\sigma$~is consistent~\cite{AbstractSepSys}. It is also antisymmetric, since otherwise all but two of its elements would be trivial. The inverses of the elements of~$\sigma$, therefore, are pairwise inconsistent.

Our plan is to find a small separation $ \vso\in\vS $ that lies in the closure of $\sigma':=\sigma\sm\{\vso\} $. By Lemma~\ref{lem:extension} we can extend $\{\svo\}$ to a consistent orientation~$ O $ of~$\vS$. Since the inverses of the separations in~$\sigma'$ are pairwise inconsistent, $O$~contains all but at most one element of~$\sigma'$.%
   \footnote{In fact, it contains all of them~-- but this is harder to show~\cite{KneipMaster}.}
   Like~$\sigma'$, therefore, $O$~will have~$ \vso $ in its closure, and hence not be closed. As $ \svo $ is co-small it will be maximal in~$ \vS $, since any larger separation would be co-trivial in~$ \vS $.%
   \COMMENT{}
   Since every two separations in $O$ have comparable orientations, the maximality of $ \svo $ in~$\vS$ implies that $O$ lies in its down-closure and hence splits at~$\{\svo\}$.%
   \COMMENT{}
   Thus, $O$~will be a consistent orientation of~$\vS$ that splits but is not closed in~$\vS$, completing our proof that $\vS$ is not normal.

Let us now show that such an $ \vso $ exists. For every $ p\in P $ let $ \vSdashp\sub\vSp $ be the set of all $ \vs\in\vSp $ for which there are infinitely many $ \vr\in\sigma $ with $ \vr\proj p=\vs $. As $ \sigma $ is infinite but $ \vSp $ is finite, $ \vSdashp $~is non-empty. Note that $ (\,\vSdashp\mid p\in P\,) $ is a restriction%
   \footnote{As an inverse system of sets, not of separation systems: the $ \vSdashp $ need not be closed under taking inverses.}
   of $ (\,\vSp\mid p\in P\,)$. Pick $ \vso\in\invlim(\,\vSdashp\mid p\in P\,) $. Then for each $ p\in P $ there are distinct $ \vr,\vrdash\in\sigma\sm\{\vso\} $ such that $ \vr\proj p=\vso\proj p=\vrdash\proj p $, so $ \vso $ lies in the closure of $ \sigma\sm\{\vso\} $. Furthermore by the star property we have $ \vr\le\rvdash $ and hence $ \vso\proj p\le\svo\proj p $ by~\eqref{*}, so $ \vso $ is small by Lemma~\ref{small}.
\end{proof}

The converse of Lemma~\ref{non-c-star} holds too:

\begin{LEM}\label{star-non-c}
If $\vS$ is a tree set but not normal, it contains an infinite star.
\end{LEM}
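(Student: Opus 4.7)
The plan is to prove the contrapositive: assuming $\vS$ is a tree set with no infinite star, we show that every consistent orientation of $\vS$ that splits is closed. Let $O$ be such an orientation. By Lemma~\ref{lem:boundedclosed}, if $O$ splits at a star of size at least two it is closed, and the empty case gives $O=\es$, which is trivially closed; so we may assume $O$ splits at a singleton $\{\vm\}$ and, for contradiction, is not closed. Lemma~\ref{lem:greatestclosed} then supplies that $\vm$ is co-small but not co-trivial and that $\vs_0 := \mv$ lies in the closure of $O$ but not in $O$. Since $\vS$ is a tree set, $\vs_0$ is also non-trivial.

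First I will observe that for every $p \in P$ with $\vm\proj p \ne \vs_0\proj p$---which holds cofinally, since $\vm \ne \vs_0$ in the Hausdorff space $\vS$---there are infinitely many distinct $\vr \in O \sm \{\vm\}$ satisfying $\vr\proj p = \vs_0\proj p$. This is a standard pigeonhole: if only finitely many such $\vr$ existed, one could take $q$ beyond all levels at which they differ from $\vs_0$ and use that $\vs_0$ lies in the closure of $O$ to produce a new one. Each such $\vr$ sits in star position with $\vs_0$: from $\vr < \vm = \svo$ we get $\vr \le \svo$, and in the tree set $\vS$ the chain alternatives $\vr \le \vs_0$ and $\vs_0 \le \vr$ are both impossible, since either would yield a trivial element (of $\vr$ witnessed by $s_0$, or of $\vs_0$ witnessed by $r$).

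Next I will pick a cofinal sequence $p_1 \le p_2 \le \cdots$ in $P$ together with distinct $\vr_n \in O \sm \{\vm\}$ such that $\vr_n\proj p_n = \vs_0\proj p_n$. By construction $\vr_n \to \vs_0$ in the topology of $\vS$: once $p_n \ge p$, the projection $\vr_n\proj p$ equals $\vs_0\proj p$. The pairs $(\vr_n,\vr_m)$ with $n \ne m$ are consistent (both lie in $O$) and nested (since $\vS$ is nested), hence each sits in either chain or star position. Applying the infinite Ramsey theorem to this 2-coloring, some infinite subsequence is monochromatic. If it is pairwise star, it is already an infinite star in $\vS$, producing the desired contradiction.

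The remaining case, an infinite strictly increasing chain $\vr_{n_1} < \vr_{n_2} < \cdots$, will be eliminated as follows. Its supremum $\vy \in \vS$ exists by Lemma~\ref{lem:chains}, with $\vy\proj p = \max_k \vr_{n_k}\proj p$ at every $p$; since the subsequence still converges to $\vs_0$, this maximum equals $\vs_0\proj p$ for every $p$, and so $\vy = \vs_0$. Then $\vr_{n_1} \le \vy = \vs_0$, and combined with $\vr_{n_1} \le \svo$ (from the star position above) and $r_{n_1} \ne s_0$ (which holds because $\vr_{n_1} \in O$ while $\vs_0 \notin O$, and $\vr_{n_1} \ne \vm = \svo$), we conclude that $s_0$ witnesses the triviality of $\vr_{n_1}$ in $\vS$, contradicting that $\vS$ is a tree set. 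The main obstacle is precisely this cofinal choice of the $p_n$: without it the chain's supremum need only agree with $\vs_0$ at the single level we fixed, and the contradictory triviality could not be read off.
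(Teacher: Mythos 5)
Your proposal takes a genuinely different route from the paper, and it has a real gap.

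The paper argues as follows: it takes the set $M$ of minimal elements of $\vS$, observes that $M$ is a star and hence \emph{finite} under the no-infinite-star hypothesis, and then picks a \emph{single} $p\in P$ large enough to separate $\vm\proj p$ from $\mv\proj p$ and to make every $\vs\in M$ with $s\ne m$ satisfy $\vs\proj p < \vm\proj p$ non-trivially. Using that every separation lies above an element of $M$ (Lemma~\ref{lem:closedsplitting}), it then derives $\mv\proj p\notin O\proj p$, contradicting $\mv\in\overline O$. Crucially, this only ever requires directedness of $P$ to find one $p$ above finitely many elements.

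Your approach instead collects many $\vr\in O\sm\{\vm\}$ close to $\vs_0=\mv$, shows each is in star position with $\vs_0$, applies infinite Ramsey to extract an infinite chain or star, and kills the chain case by showing its supremum would equal $\vs_0$. The observation about infinitely many $\vr$ at each level, the star-position argument, the Ramsey step, and the chain/star dichotomy are all correct, and the route is appealing. The gap is the sentence ``pick a cofinal sequence $p_1\le p_2\le\cdots$ in $P$.'' The directed poset $P$ is arbitrary here and need not have countable cofinality; for instance, in the canonical model of set separations of an uncountable $V$ (described in Section~\ref{sec:limsep}), $P$ is the set of all finite subsets of $V$, which has no countable cofinal subset. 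Without a cofinal sequence the levels $p_n$ are bounded by some $p^*$, so the supremum $\vy$ of the chain only satisfies $\vy\proj p=\vs_0\proj p$ for $p\le p^*$; beyond that nothing forces agreement, $\vy$ need not equal $\vs_0$, and the contradiction does not materialize. (You correctly flag this as ``the main obstacle,'' but the cofinal sequence you invoke to clear it simply may not exist.) The set $\{p:\vm\proj p\ne\vs_0\proj p\}$ is indeed cofinal, as you say, but that does not help to find a countable cofinal chain inside it. To repair the proof in full generality you would need to replace the Ramsey-on-a-sequence argument by something order-independent; the paper's use of the finite star $M$ of minimal elements is exactly such a device.
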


\begin{proof}
Assume that $\vS$ is not normal, and let $O$ be a consistent orientation of~$\vS$ that witnesses this: one that splits but is not closed in~$\vS$. By Lemma~\ref{lem:greatestclosed} and~\ref{lem:boundedclosed},%
   \COMMENT{}
   $O$~has a greatest element $ \vm $ with $ \vm\ge\mv $ and $ \mv $ in the closure of $ O $. Suppose that $ \vS $ contains no infinite star. We will find some $ p\in P $ for which $ (\mv\proj p)\notin O\proj p $, contradicting the fact that $ \mv $ lies in the closure of~$O$.

Let $ M $ be the set of minimal elements of $ \vS $. Then $ M $ is a star in~$\vS$,%
   \COMMENT{}
   and therefore finite by assumption. Note that $\mv\in M\sm O$, since any $\sv < \mv$ would have $\vs > \vm$ contradict the definition of~$\vm$, but $M\sm\{\mv\}\sub \dcl(\vm)\sm\{\mv\} = O$.%
   \COMMENT{}
   By Lemma~\ref{lem:closedsplitting} every separation in $ \vS $ lies above an element of $ M $.

No $ \vs\in M $ is trivial in $ \vS $ with witness $ m $. Hence for every $ \vs\in M $ with $ s\ne m $%
   \COMMENT{}
   there is a $ p(\vs)\in P $ such that $ s\proj p\ne m\proj p $ and $ \vs\proj p $ is not trivial with witness $ m\proj p $ for all $ p\ge p(\vs)$.%
   \COMMENT{}
   By~\eqref{*} we have $ \vs\proj p<\vm\proj p $ for all $ p\ge p(\vs) $. Pick $ p\in P $ large enough that $\mv\proj p\ne\vm\proj p$%
   \COMMENT{}
   and $ p\ge p(\vs) $ for all $ \vs\in M $ with $ s\ne m $. We shall show that $ (\mv\proj p)\notin O\proj p $.

Suppose to the contrary that there exists $ \vt\in O $ such that $ \vt\proj p=\mv\proj p $. Pick $\vs\in M$ with $\vs\le\vt$. Then $\vs\le\vt\le \vm$,%
   \COMMENT{}
   where the last inequality is strict since $\vt\proj p = \mv\proj p\ne \vm\proj p$ by the choice of~$p$. In particular, $\vs \ne \vm$. But we also have $\vs\ne\mv$: otherwise $\vs=\mv < \vt < \vm$, where the first inequality is strict since $\vt\in O$ while $\mv\notin O$,%
   \COMMENT{}
   which makes $\mv$ trivial with witness~$t$, a contradiction. Thus, $\vs\notin\{\vm,\mv\}$ and hence $s\ne m$.

By the choice of~$p$, this implies that even $ s\proj p\ne m\proj p $,%
   \COMMENT{}
   yet that $\vs\proj p $ is not trivial with witness $ m\proj p $. But it is, since $ \vs\proj p\le\vm\proj p $ (by $\vs\le\vt\le\vm$) as well as $ \vs\proj p\le\vt\proj p=\mv\proj p $ (by the choice of~$\vt$), a contradiction.
\end{proof}

Lemmas \ref{non-c-star} and~\ref{star-non-c} together amount to a characterization of the profinite tree sets that are normal. It turns out that, for tree sets, normality is more restrictive than it might have appeared at first glance:

\begin{THM}\label{prop:closedsplittingchar}
A profinite tree set is normal if and only if it contains no infinite star.\qed
\end{THM}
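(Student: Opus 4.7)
The plan is very short, since this theorem is essentially the packaging of Lemmas~\ref{non-c-star} and~\ref{star-non-c} into a single biconditional. No new combinatorial work is needed; I would only need to observe that the two lemmas are exactly the two contrapositive halves of the desired equivalence.

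For the ``only if'' direction, I would argue by contraposition using Lemma~\ref{non-c-star}: if a profinite tree set $\vS$ contains an infinite star, then that lemma produces a consistent orientation of $\vS$ that splits but is not closed in~$\vS$, so $\vS$ fails to be normal. Hence normality of a profinite tree set forces the absence of infinite stars. For the ``if'' direction, again by contraposition, I would invoke Lemma~\ref{star-non-c}: if a profinite tree set $\vS$ is not normal, then $\vS$ contains an infinite star. So a profinite tree set without any infinite star must be normal.

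There is no real obstacle here: all the substance has already been done in proving Lemmas~\ref{non-c-star} and~\ref{star-non-c}, which between them establish both implications. The only thing to watch out for is that both lemmas are stated under the blanket assumption (made at the start of Section~\ref{sec:limtreesets}, and implicit in the word ``profinite'') that $\vS = \invlim(\,\vSp\mid p\in P\,)$ is a nested profinite separation system; since the theorem's hypothesis is that $\vS$ is a profinite tree set, this is covered, and no additional assumptions on the inverse system are needed for the conclusion.
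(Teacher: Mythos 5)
Your proposal is correct and matches the paper exactly: the theorem is stated with a \qed precisely because it is the immediate conjunction of Lemmas~\ref{non-c-star} and~\ref{star-non-c}, which supply the two contrapositive implications just as you describe.
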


\bibliographystyle{plain}
\bibliography{collective.bib}

\end{document}